\newtheorem{theorem}{Theorem}[section]
\newtheorem{lemma}[theorem]{Lemma}
\theoremstyle{definition}
\newtheorem{example}[theorem]{Example}
\theoremstyle{remark}
\newtheorem{remark}[theorem]{Remark}
\newtheorem{proposition}[theorem]{Proposition}
\newtheorem{corollary}[theorem]{Corollary}
\newtheorem{problem}{Problem}
\numberwithin{equation}{section}
\newcommand{\Z}{{\mathbb{Z}}}
\newcommand{\R}{{\mathbb{R}}}
\newcommand{\N}{{\mathbb{N}}}
\begin{document}

\title{Projective bundles over small covers and topological triviality problem}

\author{Shintar\^o KUROKI}
\address{Osaka City University Advanced Mathematical Institute,
3-3-138 Sugimoto, Sumiyoshi-ku Osaka 558-8585, JAPAN}
\email{kuroki@scisv.sci.osaka-cu.ac.jp}
\thanks{
The first author was supported in part by the JSPS Strategic Young Researcher Overseas Visits Program for Accelerating Brain Circulation
"Deepening and Evolution of Mathematics and Physics, Building of International Network Hub based on OCAMI".
The second author was supported in part by  grants from NSFC (No. 11371093 and No. 10931005) and and RFDP (No. 20100071110001).}
\author{Zhi L\"U}
\address{School of Mathematical Science, Fudan University, Shanghai, 200433, P.R.\ China}
\email{zlu@fudan.edu.cn}

\subjclass[2010]{}

\keywords{Small cover, Projective bundle, Stiefel-Whiteny class}

\begin{abstract}
This paper investigates the projectivization of real vector bundles over small covers.
We first give a necessary and sufficient condition for such a projectivization to be a small cover. Then associated with moment-angle manifolds, we further study the structure of such a projectivization as a small cover. As an application, we characterize the real projective bundles over $2$-dimensional small covers by interpreting the fibre sum operation to some combinatorial operation.
Finally, we study when the projectivization of Whitney sum of the tautological line bundle
and the tangent bundle over real projective space is diffeomorphic to the product of two real projective spaces.
\end{abstract}

\maketitle
\tableofcontents

\section{Introduction}
\label{sect1}

A real projectivization $P(\xi)$ over a closed manifold $M$ is defined by a vector bundle $\xi$ over $M$ via antipodal maps on fibers of associated sphere bundle $S(\xi)$ (we also call a {\it real projective bundle} over $M$ in this paper).
In \cite{Borel-H},  Borel and Hirzebruch computed the total Stiefel-Whitney class of $P(\xi)$, which has been applied to the study of the equivariant cobordism (see \cite{CF}).
In his paper~\cite{St}, Stong introduced a special kind of real projective bundles (i.e., {\it Stong manifolds}, also see Section \ref{sect4}),
which can be used as generators in the Thom unoriented cobordism ring $\mathfrak{N}_*$.

As the topological version of real toric manifolds, Davis and Januszkiewicz introduced and studied a class of particularly nicely behaving manifolds $M^n$ (called {\it small covers}), each of which admits a locally standard ${\Bbb Z}_2^n$-action such that its orbit space is a
simple convex $n$-polytope $P^n$, where ${\Bbb Z}_2^{n}=\{-1,1\}^n$ is a real torus.
This establishes a direct connection among topology,
combinatorics and commutative algebra etc.
In this paper, we first consider the following natural questions:

\begin{problem}\label{pro}
Let $P(\xi)$ be a real projective bundle over a small cover.
When is also $P(\xi)$ a small cover? If so, how can we characterize its topology?
\end{problem}

With respect to Problem~\ref{pro}, we have
\begin{theorem}
\label{main1}
Let $P(\xi)$ be a real projective bundle over a small cover. $P(\xi)$  is a small cover if and only if the equivariant vector bundle $\xi$ decomposes into the Whitney sum of equivariant line bundles, i.e.,
$\xi\equiv\gamma_{1}\oplus\cdots\oplus\gamma_{k-1}\oplus\gamma_{k}$.
\end{theorem}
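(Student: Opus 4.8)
The two implications are treated separately, the sufficiency being a direct construction and the necessity carrying the real content. For the \emph{sufficiency}, write $M^{n}=M(P^{n},\lambda)$ and assume $\xi\equiv\gamma_{1}\oplus\cdots\oplus\gamma_{k}$ with each $\gamma_{i}$ a $\mathbb{Z}_2^{n}$-equivariant line bundle. I would build a locally standard $\mathbb{Z}_2^{n+k-1}$-action on $P(\xi)$ as follows. First lift the given $\mathbb{Z}_2^{n}$-action on $M^{n}$ to $\xi$ preserving each summand $\gamma_{i}$; this induces an action on $P(\xi)$. Next let a second torus $\mathbb{Z}_2^{k}$ act on the total space of $\xi=\bigoplus\gamma_{i}$ by independent sign changes on the summands; the diagonal $\mathbb{Z}_2\subset\mathbb{Z}_2^{k}$ is scalar and hence trivial on $P(\xi)$, leaving a fibrewise $\mathbb{Z}_2^{k-1}$-action that commutes with the base action. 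Together these give an effective $\mathbb{Z}_2^{n}\times\mathbb{Z}_2^{k-1}=\mathbb{Z}_2^{n+k-1}$-action. Over a standard chart $\mathbb{R}^{n}$ of $M^{n}$ the bundle $\xi$ trivialises equivariantly up to characters on the summands, so $P(\xi)$ looks locally like $\mathbb{R}^{n}\times\mathbb{RP}^{k-1}$ with the product of the standard $\mathbb{Z}_2^{n}$-action on $\mathbb{R}^{n}$ and the standard $\mathbb{Z}_2^{k-1}$-action on $\mathbb{RP}^{k-1}$ (the base characters being absorbed into the fibre factor); one checks at once that this is locally standard. Finally I would identify the orbit space by taking the fibrewise quotient $\mathbb{RP}^{k-1}/\mathbb{Z}_2^{k-1}=\Delta^{k-1}$ first --- on which the residual $\mathbb{Z}_2^{n}$ acts only through the base, the sign characters having died --- and then dividing by $\mathbb{Z}_2^{n}$, which produces a $\Delta^{k-1}$-bundle over $P^{n}$. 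A simplex bundle over a simple polytope is again a simple polytope, so $P(\xi)$ is a small cover.

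For the \emph{necessity}, suppose $N:=P(\xi)$ is a small cover $M(Q^{n+k-1},\mu)$, with characteristic submanifolds $N_{1},\dots,N_{m}$. The first step is to sort them into \emph{vertical} ones, for which $\pi(N_{i})$ is a proper subset of $M^{n}$ and hence $N_{i}=\pi^{-1}(M_{F})=P(\xi|_{M_{F}})$ for a characteristic submanifold $M_{F}\subset M^{n}$, and \emph{horizontal} ones, which surject onto $M^{n}$. The crucial claim I would then establish is that $Q$ is a $\Delta^{k-1}$-bundle over $P^{n}$ compatibly with $\mu$ and $\lambda$: there are exactly $k$ horizontal facets, with the intersection pattern of the $k$ facets of $\Delta^{k-1}$ (so any $k-1$ of them meet in a section of $\pi$ and all $k$ meet nowhere), and the remaining facets are the $\pi$-preimages of the facets of $P^{n}$. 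Granting this, there is a surjection $\mathbb{Z}_2^{n+k-1}\to\mathbb{Z}_2^{n}$ under which $\pi$ is equivariant, and its kernel $\mathbb{Z}_2^{k-1}$ acts fibrewise. For each vertex $v$ of $P^{n}$ the fibre $\pi^{-1}(v)=\mathbb{RP}^{k-1}$ is then a face of $N$ and a small cover over $\Delta^{k-1}$ under this $\mathbb{Z}_2^{k-1}$, hence is the standard one (its orbit polytope must be $\Delta^{k-1}$ since $H^{*}(\mathbb{RP}^{k-1};\mathbb{Z}_2)$ is monogenic), so $\xi|_{p_{v}}$ splits into $k$ distinct eigenlines. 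The fibrewise $\mathbb{Z}_2^{k-1}$-action therefore decomposes $\xi$ into $k$ line sub-bundles; one checks these are preserved by a lift of the $\mathbb{Z}_2^{n}$-action, yielding the desired equivariant splitting $\xi\equiv\gamma_{1}\oplus\cdots\oplus\gamma_{k}$.

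The step I expect to be the main obstacle is exactly the combinatorial claim in the necessity part: that an arbitrary small cover structure on the manifold $P(\xi)$ is forced to be adapted to the bundle projection, with $Q$ a simplex bundle over $P^{n}$ and $\mu$ respecting it. A priori one must rule out ``exotic'' structures ignoring $\pi$. I would attack this through cohomology: on one hand Leray--Hirsch together with the Borel--Hirzebruch formula give $H^{*}(N;\mathbb{Z}_2)=H^{*}(M^{n};\mathbb{Z}_2)[t]/(t^{k}+w_{1}(\xi)t^{k-1}+\cdots+w_{k}(\xi))$ with $t\in H^{1}$; on the other hand the Davis--Januszkiewicz presentation expresses $H^{*}(N;\mathbb{Z}_2)$ via the Stanley--Reisner ideal of $Q$ and the linear relations from $\mu$. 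Matching the two presentations should force $t^{k}+\sum_{i}w_{i}(\xi)t^{k-i}=\prod_{i=1}^{k}(t+a_{i})$ for classes $a_{i}\in H^{1}(M^{n};\mathbb{Z}_2)$ carried by the horizontal facet submanifolds, and should identify the vertical facet classes with the pullbacks of the facet classes of $M^{n}$ --- precisely the compatibility needed above. The final point is to promote each $a_{i}$ to a genuine $\mathbb{Z}_2^{n}$-equivariant line bundle $\gamma_{i}$ over $M^{n}$, using that real line bundles over a small cover are controlled by $H^{1}(-;\mathbb{Z}_2)$ together with the induced equivariant structure, so that $\xi\equiv\gamma_{1}\oplus\cdots\oplus\gamma_{k}$ as equivariant bundles.
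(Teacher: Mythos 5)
Your sufficiency argument is essentially the paper's, just spelled out in more detail (the paper only remarks that one "can easily check" the induced locally standard $(\Z_2^n\times\Z_2^{k-1})$-action). For necessity, however, the paper takes a much more direct geometric route that you do not touch. The paper starts from the assumption that the small cover structure on $P(\xi)$ is a $\Z_2^n\times\Z_2^{k-1}$-action whose $\Z_2^n$-factor is the lift of the original action (so the bundle projection $\rho$ is $\Z_2^n$-equivariant and $\Z_2^{k-1}$ acts trivially on $M$, hence effectively on each fibre). It then lifts the fibrewise $\Z_2^{k-1}$-action on $P_x(\xi)$ to a $\Z_2^k$-action on each vector space fibre $F_x(\xi)$, observes that the transition functions of $\xi$ — reduced to $O(k)$ — must commute with this diagonal $\Z_2^k$, and invokes the elementary fact that the centralizer of the diagonal $\Z_2^k$ in $O(k)$ is $\Z_2^k$ itself. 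This reduces the structure group of $\xi$ to $\Z_2^k$, which is precisely a Whitney sum decomposition into line bundles. Your proposal never does this transition-function/centralizer computation, which is the actual mechanism producing the splitting.

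Your alternative route through the combinatorics of the orbit polytope and cohomology is a genuinely different strategy, and you are right that it raises the question of whether a small cover structure on $P(\xi)$ must be adapted to $\rho$; the paper does not raise this question, since it simply works with the natural $\Z_2^n\times\Z_2^{k-1}$-interpretation. But the argument you sketch has two real gaps beyond your own caveats. First, "matching the two presentations" of $H^*(P(\xi);\Z_2)$ to force $\sum_i w_i(\xi)t^{k-i}=\prod_i(t+a_i)$ is not a formal consequence of having an abstract ring isomorphism: the Davis--Januszkiewicz presentation depends on a choice of polytope and characteristic function, and showing that the facet classes necessarily split into "vertical" and "horizontal" families aligned with the bundle structure is a substantial claim, not an extraction from the relations alone. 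Second, and more seriously, even granting the factorization $w(\xi)=\prod_i(1+a_i)$ over $H^1(M;\Z_2)$, this does not by itself imply that $\xi$ is isomorphic (let alone equivariantly isomorphic) to $\oplus_i\gamma_i$ with $w_1(\gamma_i)=a_i$: Stiefel--Whitney classes do not determine a real vector bundle. To cross from the fibrewise eigenline decomposition at each point to a global splitting of $\xi$ is exactly where one needs an argument like the paper's reduction of the structure group to $\Z_2^k$; your phrase "the fibrewise $\Z_2^{k-1}$-action therefore decomposes $\xi$" is where this is being assumed rather than proved.
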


By this theorem, we have the following two corollaries (also see Section \ref{sect4} the definitions of generalized real Bott manifold and Stong manifold):

\begin{corollary}
Each generalized real Bott manifold is a small cover. In particular, each Stong manifold is a small cover.
\end{corollary}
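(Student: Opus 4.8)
The plan is to present a generalized real Bott manifold as an iterated real projective bundle and to run an induction up the tower, invoking Theorem~\ref{main1} at each stage. Recall that a generalized real Bott manifold $B_m$ is the top space of a sequence of fibrations
\[
B_m \longrightarrow B_{m-1}\longrightarrow\cdots\longrightarrow B_1\longrightarrow B_0=\{\mathrm{pt}\},
\]
in which each $B_i$ is the projectivization $P\bigl(\underline{\R}\oplus L^{(1)}_i\oplus\cdots\oplus L^{(n_i)}_i\bigr)$ of the Whitney sum of the trivial line bundle with real line bundles $L^{(j)}_i$ over $B_{i-1}$; a Stong manifold is the special instance of this construction in which each $L^{(j)}_i$ is a tensor power of the tautological line bundle of the preceding projectivization (or a trivial line bundle), as recalled in Section~\ref{sect4}. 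The base of the induction is immediate: $B_1=P(\underline{\R}^{\,n_1+1})=\R P^{n_1}$ is a small cover, with the standard coordinatewise $\Z_2^{n_1}$-action and orbit polytope the $n_1$-simplex.

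For the inductive step, suppose $B_{i-1}$ is a small cover over a simple polytope, with characteristic $\Z_2^{\dim B_{i-1}}$-action. I would first check that every real line bundle over $B_{i-1}$ admits a $\Z_2^{\dim B_{i-1}}$-equivariant structure compatible with this action: real line bundles over $B_{i-1}$ are classified by $H^1(B_{i-1};\Z_2)$, and for a small cover this group is spanned by the first Stiefel--Whitney classes of the line bundles associated to the facets of the orbit polytope, each of which is manifestly equivariant; hence so is every real line bundle over $B_{i-1}$. (In the Bott/Stong case this is even more transparent, since each $L^{(j)}_i$ is assembled from tautological line bundles of the intermediate projectivizations, and these carry a canonical equivariant structure by the construction used in the proof of Theorem~\ref{main1}.) Consequently $\xi_i:=\underline{\R}\oplus L^{(1)}_i\oplus\cdots\oplus L^{(n_i)}_i$ is, equivariantly, a Whitney sum of equivariant line bundles, and Theorem~\ref{main1} yields that $B_i=P(\xi_i)$ is a small cover. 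Iterating up the tower shows $B_m$ is a small cover, and the assertion for Stong manifolds is the special case just singled out.

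The one genuinely nonformal point, and where I expect to need the most care, is the equivariance claim in the inductive step: Theorem~\ref{main1} concerns \emph{equivariant} vector bundles, so one must know that the line bundles occurring in the tower can be promoted to $\Z_2^n$-equivariant bundles over the small cover at the previous stage. Once that is established, the remainder is a straightforward induction, requiring no computation with Stiefel--Whitney classes or explicit characteristic functions.
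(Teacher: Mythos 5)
Your induction up the Bott tower, invoking Theorem~\ref{main1} at each stage, is exactly the argument the paper has in mind (the paper gives no separate proof, deriving the corollary immediately from the theorem). The equivariance point you single out as the only nonformal step is precisely what Section~\ref{sect4.2} of the paper settles: over a small cover $M=\mathcal{Z}_P/\Z_2^l$, every real line bundle is of the form $\mathcal{Z}_P\times_{\Z_2^l}\R_\alpha$ for some character $\alpha:\Z_2^l\to\Z_2$ (since $[M;B\Z_2]=H^1(M;\Z_2)\simeq\Z_2^l$), and this presentation is manifestly $\Z_2^n$-equivariant; so any Whitney sum of line bundles over a small cover is automatically an equivariant vector bundle split into equivariant line bundles, and the hypothesis of Theorem~\ref{main1} is satisfied. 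One small mismatch worth flagging: your description of a Stong manifold (tensor powers of tautological bundles at each stage) differs from the paper's, which takes $B=\R P^{n_1}\times\cdots\times\R P^{n_l}$ and forms a single projectivization $P(\gamma_1\oplus\cdots\oplus\gamma_l)$ of pulled-back tautological bundles; the paper then asserts (without proof) that this is a generalized real Bott manifold, so the second clause of the corollary follows from the first in either formulation.
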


The generalized Bott manifold is defined in \cite{CMS} as a special class of toric manifolds.
The generalized real Bott manifold is its real analogue.

\begin{corollary} \label{br}
Each class of $\mathfrak{N}_*$ contains a small cover as its representative.
\end{corollary}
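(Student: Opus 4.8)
The plan is to deduce the corollary from the preceding one by showing that small covers are abundant enough to realize every cobordism class. The input is Stong's result (see \cite{St} and Section~\ref{sect4}) that the Stong manifolds serve as a set of polynomial generators of $\mathfrak{N}_*$: any homogeneous $\alpha\in\mathfrak{N}_n$ can be written as a finite sum $\alpha=\sum_{i=1}^{r}[\,S_{i,1}\times\cdots\times S_{i,k_i}\,]$ of products of Stong manifolds, each product of total dimension $n$. By the preceding corollary every factor $S_{i,j}$ is a small cover.

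Two closure properties of the class of small covers then conclude the proof. First, a product $M_1^{n_1}\times M_2^{n_2}$ of small covers over simple polytopes $P_1$ and $P_2$ is again a small cover: $P_1\times P_2$ is a simple $(n_1+n_2)$-polytope, and the block sum $\lambda_1\oplus\lambda_2$ of the characteristic functions is a characteristic function on it. Thus each product $N_i:=S_{i,1}\times\cdots\times S_{i,k_i}$ is an $n$-dimensional small cover. Second, the connected sum $N_1\#\cdots\# N_r$ of $n$-dimensional small covers is again a small cover, obtained from the connected sum of the underlying simple polytopes at chosen vertices after making the two characteristic functions agree near those vertices (a change of basis in $GL(n,\mathbb{Z}_2)$ suffices); and it represents $[N_1]+\cdots+[N_r]$, since $N_i\sqcup N_j$ and $N_i\# N_j$ are related by surgery along an embedded $S^0$ and hence cobordant, so $[N_i\# N_j]=[N_i\sqcup N_j]=[N_i]+[N_j]$ over $\mathbb{Z}_2$. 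Iterating, $[N_1\#\cdots\# N_r]=\sum_i[N_i]=\alpha$, so $\alpha$ is represented by the small cover $N_1\#\cdots\# N_r$ (the zero class and the case $n=0$ being handled directly, e.g.\ by $\mathbb{RP}^n\#\mathbb{RP}^n$ and by a point).

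I expect the only real obstacle to lie in the connected-sum step: one must verify that the equivariant connected sum of small covers---equivalently, of coloured simple polytopes---is well defined, which is precisely where the $GL(n,\mathbb{Z}_2)$ adjustment of the characteristic functions at the glued vertices is needed, and that this operation realizes addition in $\mathfrak{N}_*$. Everything else is routine. If one is content with the weaker but cleaner statement that small covers generate $\mathfrak{N}_*$ as a ring, then only the product closure property is required and the connected-sum discussion can be dropped entirely.
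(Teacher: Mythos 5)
Your argument is correct and is essentially the route the paper intends: combine Stong's theorem with the preceding corollary (every Stong manifold is a small cover, via Theorem~\ref{main1}), together with closure of small covers under products (block-sum colourings on product polytopes) and under equivariant connected sums (vertex connected sum of coloured polytopes, after a $GL(n,\Z/2\Z)$ adjustment making the colours at the glued vertices agree). The paper states the corollary without further argument (referring to \cite{BR} only for a different proof), so you have simply supplied the closure details the authors leave implicit; note that stronger formulations of Stong's fibering theorem already assert a single Stong-manifold representative in every positive-dimensional class, which would render the product and connected-sum steps superfluous, but your argument is valid with either version of the input.
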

The fact of Corollary~\ref{br} has been proved in~\cite{BR} with a different argument, and in addition, the fact
that
each class of complex cobordism contains a quasitoric manifold as its representative
was also proved in \cite{BR}. For the equivariant case, see~\cite{L, LT}. 

Associated with moment-angle manifolds, we further study the structure of a real projective bundle  $P(\xi)$ as a small cover.
As an application, we characterize the real projective bundles (as small covers) over $2$-dimensional small covers.
Our result is stated as follows:
\begin{theorem}
\label{main2}
Let $P(\xi)$ be a real projective bundle over $2$-dimensional small cover $M^{2}$ with its fibre $\R P^{k-1}$.
Then, $P(\xi)$ can be constructed from real projective bundles $P(\kappa)$ over $\R P^{2}$
and $P(\zeta)$ over $T^{2}$ by using the fibre sum $\sharp^{\Delta^{k-1}}$.
\end{theorem}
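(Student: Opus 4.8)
The plan is to realize $P(\xi)$ as a small cover over a product polytope, to decompose that polytope along copies of $\Delta^{k-1}$ into blocks of the form $\Delta^2\times\Delta^{k-1}$ and $(\Delta^1\times\Delta^1)\times\Delta^{k-1}$ mirroring the way $M^2$ is assembled by equivariant connected sums, and to recognize the resulting decomposition of $P(\xi)$ as an iterated fibre sum. For the first point: since $P(\xi)$ is a small cover, Theorem~\ref{main1} gives an equivariant splitting $\xi\equiv\gamma_1\oplus\cdots\oplus\gamma_k$ into equivariant real line bundles over $M^2$, and the projectivization of such a Whitney sum over a small cover $N$ with orbit polytope $P$ is again a small cover whose orbit space is the product $P\times\Delta^{k-1}$ (here $\Delta^{k-1}$ is the orbit simplex of the fibre $\R P^{k-1}$). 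Its characteristic function $\Lambda$ on $P\times\Delta^{k-1}$ is the ``projective bundle'' one: on each facet $F\times\Delta^{k-1}$ it carries the characteristic vector of $N$ at the facet $F$, and on each facet $P\times G_i$ it carries a vector built from the characteristic data of $\Delta^{k-1}$ and the $\Zt^n$-equivariant class of $\gamma_i$. Applied to $M^2$, whose orbit polygon we call $Q$, this presents $P(\xi)$ as a small cover over $Q\times\Delta^{k-1}$.

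\emph{Decomposing the base.} Write $Q$ as an iterated polygonal connected sum $Q\cong Q_1\,\sharp\cdots\sharp\,Q_s$ in which each $Q_j$ is a triangle $\Delta^2$ or a square $\Delta^1\times\Delta^1$, and, invoking the classification of $2$-dimensional small covers, so chosen that the characteristic function of $M^2$ restricts on $Q_j$ to that of $\R P^2$ (when $Q_j=\Delta^2$) or of $T^2$ (when $Q_j=\Delta^1\times\Delta^1$); the Klein bottle over a square is itself the equivariant connected sum of two copies of $\R P^2$, so it need not be listed. A polygonal connected sum near a vertex $v$ is a purely local modification of $Q$, so taking the product with $\Delta^{k-1}$ turns the decomposition of $Q$ into
\[
Q\times\Delta^{k-1}\ \cong\ (Q_1\times\Delta^{k-1})\,\sharp^{\Delta^{k-1}}\cdots\sharp^{\Delta^{k-1}}\,(Q_s\times\Delta^{k-1}),
\]
the consecutive pieces being glued along the faces $\{v\}\times\Delta^{k-1}\cong\Delta^{k-1}$; this is exactly the combinatorial operation that underlies the fibre sum $\sharp^{\Delta^{k-1}}$.

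\emph{Restricting the characteristic function and concluding.} The characteristic function $\Lambda$ of $P(\xi)$ restricts on each block $Q_j\times\Delta^{k-1}$ to a characteristic function $\Lambda_j$ which is again of projective bundle type over $Q_j$: its values on the facets $F\times\Delta^{k-1}$ form the characteristic function of $M_j=\R P^2$ or $T^2$, while its values on the facets $Q_j\times G_i$ record a Whitney sum $\kappa_j$ of $k$ equivariant line bundles over $M_j$ (the restrictions of the $\gamma_i$). Hence the small cover over $Q_j\times\Delta^{k-1}$ determined by $\Lambda_j$ is the real projective bundle $P(\kappa_j)$ over $M_j$. Realizing the polytope-level fibre sum of the previous step by the corresponding equivariant connected sums of small covers --- using that an equivariant connected sum of small covers over $P_1$ and $P_2$ along a common face is a small cover over $P_1\,\sharp\,P_2$ --- yields
\[
P(\xi)\ \equiv\ P(\kappa_1)\,\sharp^{\Delta^{k-1}}\cdots\sharp^{\Delta^{k-1}}\,P(\kappa_s),
\]
with every $P(\kappa_j)$ a real projective bundle over $\R P^2$ or over $T^2$, as claimed. (If $Q$ is already a triangle or a square then $s=1$ and there is nothing to prove.)

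\emph{The main obstacle.} The delicate point is the interaction in the last step between the product structure of the orbit polytope and the connected-sum decomposition of its polygon factor: one must verify that the restriction of $\Lambda$ to each $Q_j\times\Delta^{k-1}$ remains of projective bundle type over $Q_j$ --- so that the summands of the fibre sum are genuinely of the form $P(\kappa_j)\to M_j$ and not arbitrary small covers over $Q_j\times\Delta^{k-1}$ --- and that the gluing faces are precisely the fibre simplices $\{v\}\times\Delta^{k-1}$, so that the construction is literally the fibre sum $\sharp^{\Delta^{k-1}}$. Granted the description of projectivizations of Whitney sums of equivariant line bundles as small covers over product polytopes (the structure theory accompanying Theorem~\ref{main1}), everything else is bookkeeping with characteristic functions together with the standard behaviour of small covers under equivariant connected sum.
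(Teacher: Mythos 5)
Your overall strategy — cut the base polygon into triangles and squares, take the product with $\Delta^{k-1}$, and recognize the result as an iterated projective fibre sum — is the same as the paper's, so the route is not genuinely different. But you have left the single nontrivial step of the argument unproved, and you say so yourself in your final paragraph.

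Two remarks. First, you invoke ``the classification of $2$-dimensional small covers'' (the statement that the characteristic function of $M^2$ decomposes into those of $\R P^2$ and $T^2$ under polygonal connected sum) as an input, whereas the paper proves the analogous decomposability directly at the level of \emph{projective} characteristic functions: for $m\ge 5$ one can always find two non-adjacent facets $F,F'$ with $\det(a_F,a_{F'})=1$, and this is exactly the condition that lets one apply $\sharp_{\Delta^{k-1}}^{-1}$. Since the condition involves only the $p_a$-part of $\lambda_P$, the two framings are equivalent, and the base-level decomposition is a well-known external fact, so this is not circular — merely a different logical packaging; the paper's is self-contained.

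Second, and more seriously: after you cut $Q\times\Delta^{k-1}$ into pieces $Q_j\times\Delta^{k-1}$, the restriction of $\Lambda$ to a piece is \emph{not} automatically a projective characteristic function in the normalized sense of Section~\ref{sect4.3}. The definition there requires $\lambda_P(F_1)=e_1\times\mathbf 0,\dots,\lambda_P(F_n)=e_n\times\mathbf 0$; but on a small piece the restriction of the global $\Lambda$ to the facets $F\times\Delta^{k-1}$ will in general have nonzero $p_b$-components, i.e.\ it has the shape of the matrices $A$ and $B$ in the paper's proof with $b_1,b_2\ne 0$ in the lower-left block, and therefore does \emph{not} visibly present the piece as $P(\kappa_j)\to M_j$. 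The paper closes this gap by the explicit left-multiplication by the matrix
\[
X=\begin{pmatrix} I_2 & O\\ (b_1\ b_2) & I_{k-1}\end{pmatrix},
\]
which is a weak equivariant homeomorphism and brings $A$ and $B$ into the normalized form $\begin{pmatrix} I_2 & \ast\\ O & \ast\end{pmatrix}$, exhibiting the irreducible pieces as projective bundles over $\R P^2$ and $T^2$ on the nose. You assert the conclusion of this step (``its values on the facets $F\times\Delta^{k-1}$ form the characteristic function of $M_j$'') without performing the normalization, and then flag it as ``the delicate point'' without resolving it. This is precisely the content that distinguishes the theorem from trivial bookkeeping, so it cannot be left to the reader: you must supply the coordinate change (or an equivalent argument) that shows each irreducible $3$- or $4$-gon block is weakly equivariantly homeomorphic to a genuine projective bundle.
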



If we take $k=1$ in the above theorem, then this gives the standard equivariant connected sums of $2$-dimensional small covers.
So by Theorem \ref{main2} we also have the following well-known facts:
\begin{corollary}
Let $M$ be a $2$-dimensional small cover.
Then $M$ can be constructed from $\R P^{2}$
and $T^{2}$ by using the equivariant connected sum $\sharp^{\Delta^{k-1}}$.
\end{corollary}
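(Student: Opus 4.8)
The plan is to obtain this as the case $k=1$ of Theorem~\ref{main2}. When $k=1$ the fibre $\R P^{k-1}=\R P^{0}$ of a real projective bundle is a single point, so for any equivariant line bundle $\xi$ over a closed manifold the projectivization $P(\xi)$ coincides, $\Zt$-equivariantly, with the base. In particular, taking $\xi=\underline{\R}$ the trivial equivariant line bundle over a given $2$-dimensional small cover $M$, one has $P(\underline{\R})\equiv M$ as small covers, so $M$ is itself an instance of the real projective bundles treated in Theorem~\ref{main2}.

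Applying Theorem~\ref{main2} to $P(\xi)\equiv M$ then produces line bundles $\kappa\to\R P^{2}$ and $\zeta\to T^{2}$ such that $M$ is assembled from $P(\kappa)$ and $P(\zeta)$ by the fibre sum $\sharp^{\Delta^{k-1}}$. Specializing again to $k=1$: $P(\kappa)\equiv\R P^{2}$, $P(\zeta)\equiv T^{2}$, and the fibre sum $\sharp^{\Delta^{0}}$ — glued along $0$-dimensional faces (vertices) of the orbit polygons, with point fibres — is exactly the classical equivariant connected sum of small covers at vertices. Putting these identifications together gives the assertion.

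The only matters needing a line of verification rather than a quotation, and where I would expect the (minor) friction, are the two degenerations at $k=1$: first, that the fibre sum $\sharp^{\Delta^{k-1}}$ of Theorem~\ref{main2} indeed collapses to the Davis--Januszkiewicz equivariant connected sum when the fibre polytope $\Delta^{k-1}$ is a point, which requires tracing how the gluing data along a product face $F\times\Delta^{k-1}$ degenerates; and second, that the base building blocks furnished by the proof of Theorem~\ref{main2} over the triangle and the square read off correctly, namely as $\R P^{2}$ and the small covers over $\Delta^{1}\times\Delta^{1}$ — the torus $T^{2}$ together with the Klein bottle, the latter being itself $\R P^{2}\,\sharp\,\R P^{2}$, so that $\R P^{2}$ and $T^{2}$ alone suffice. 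Granting Theorem~\ref{main2}, neither step uses a new idea and the corollary follows.
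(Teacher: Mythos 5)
Your proof proposal is correct and takes essentially the same route as the paper: specialize Theorem~\ref{main2} to $k=1$, identify the $0$-dimensional projective fibre sum $\sharp_{\Delta^{0}}$ with the ordinary equivariant connected sum (the content of Remark~\ref{relation with the connected sum}), and read off $\R P^{2}$ and $T^{2}$ as the terminal blocks. The aside about the Klein bottle is not needed, since in the paper's reduction the only square that survives the iterated $\sharp^{-1}_{\Delta^{k-1}}$ is the one coloured alternately, which is $T^{2}$, but noting that the Klein bottle is $\R P^{2}\sharp\R P^{2}$ does no harm.
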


Finally, in this paper, we also prove the following theorem:
\begin{theorem}
\label{main3}
Let $\gamma$ be the tautological line bundle and $\tau_{\R P^{n}}$ be the tangent bundle over $\R P^{n}$.
Then, the following two statements are equivalent:
\begin{enumerate}
\item $P(\gamma\oplus\tau_{\R P^{n}})$ is diffeomorphic to $\R P^{n}\times \R P^{n}$;
\item $n=0,2,6$.
\end{enumerate}
\end{theorem}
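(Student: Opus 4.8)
The plan is to compute the cohomology rings of both $P(\gamma\oplus\tau_{\R P^{n}})$ and $\R P^{n}\times\R P^{n}$ with $\Z_2$-coefficients and compare them, using the Borel--Hirzebruch formula for the total Stiefel--Whitney class recalled in the introduction. First I would recall that $H^{*}(\R P^{n};\Z_2)=\Z_2[a]/(a^{n+1})$ and that, for a rank-$(k)$ bundle $\xi$ over a base $M$ with $k=n+1$ here (since $\gamma\oplus\tau_{\R P^{n}}$ has rank $1+n$), the projective bundle formula gives
\[
H^{*}(P(\xi);\Z_2)=H^{*}(M;\Z_2)[c]\big/\big(c^{k}+w_{1}(\xi)c^{k-1}+\cdots+w_{k}(\xi)\big),
\]
where $c$ is the first Stiefel--Whitney class of the tautological line bundle over $P(\xi)$. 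The key computational input is the total Stiefel--Whitney class $w(\gamma\oplus\tau_{\R P^{n}})=w(\gamma)\,w(\tau_{\R P^{n}})=(1+a)\cdot(1+a)^{n+1}=(1+a)^{n+2}$, so $w_{j}(\gamma\oplus\tau_{\R P^{n}})=\binom{n+2}{j}a^{j}$ in $\Z_2$, with the convention that $a^{j}=0$ for $j>n$.

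Next I would write down the defining relation of $H^{*}(P(\gamma\oplus\tau_{\R P^{n}});\Z_2)$ explicitly: it is $\sum_{j=0}^{n+1}\binom{n+2}{j}a^{j}c^{\,n+1-j}=0$ together with $a^{n+1}=0$. On the other side, $H^{*}(\R P^{n}\times\R P^{n};\Z_2)=\Z_2[x,y]/(x^{n+1},y^{n+1})$. To get a diffeomorphism one needs (at least) a graded ring isomorphism, and the natural strategy is to see whether the class $c$ (or $a+c$, etc.) can serve as a second polynomial generator squaring appropriately; concretely one asks when the relation above can be rewritten, after a linear change of variables over $\Z_2$, in the form $y^{n+1}=0$. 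Equivalently, setting $b=a+c$ or examining the quotient directly, the manifold splits as a product of two projective spaces precisely when the polynomial $\sum_{j}\binom{n+2}{j}a^{j}c^{\,n+1-j}$ factors modulo $a^{n+1}$ as $a^{n+1}$'s complement — I expect the clean statement to be that $(1+t)^{n+2}\equiv(1+t)\cdot(\text{something})$ forces a binomial-coefficient identity mod $2$, and unwinding it produces the condition $\binom{n+2}{j}\equiv\binom{n+1}{j}\pmod 2$ for the relevant range, i.e.\ $\binom{n+1}{j-1}\equiv 0\pmod 2$ for $1\le j\le n$. By Kummer's theorem (or Lucas), $\binom{n+1}{i}\equiv 0\pmod 2$ for all $1\le i\le n-1$... this is false in general; rather the surviving constraint should collapse to $n+1$ having a single nonzero binary digit beyond the forced ones, and a short case analysis of the binary expansion of $n$ and $n+2$ will pin the solutions down to $n\in\{0,2,6\}$ (note $n+2\in\{2,4,8\}$ are exactly the powers of $2$ in low range, which is the expected punchline, reminiscent of the Hopf invariant one phenomenon via $\R P^{n}$ parallelizability-type obstructions).

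The main obstacle, and the place where care is needed, is twofold. First, a graded $\Z_2$-algebra isomorphism is necessary but \emph{a priori} not sufficient for a diffeomorphism, so to prove the implication (2)$\Rightarrow$(1) I must actually exhibit the diffeomorphisms for $n=0,2,6$ — for $n=0$ it is trivial ($\R P^{0}$ is a point and $P(\gamma)=S^{0}/\pm$... one checks directly), and for $n=2,6$ I expect to use that $\tau_{\R P^{n}}\oplus\varepsilon\cong(n+1)\gamma$ together with the fact that for these $n$ the bundle $\gamma\oplus\tau_{\R P^{n}}$ becomes, after the standard stable identification, $(n+2)\gamma$ restricted appropriately, whose projectivization is well understood; alternatively invoke Stong's classification of such projective bundles and match invariants. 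Second, for the implication (1)$\Rightarrow$(2) I need the cohomological obstruction to be genuinely sharp, which is where the binomial-coefficient/binary-digit analysis must be done carefully, checking both the top relation and that no subtler change of variables over $\Z_2$ evades it; in particular I must verify that if the ring were isomorphic to $\Z_2[x,y]/(x^{n+1},y^{n+1})$ then the Poincaré polynomial and the structure of the square map $v\mapsto v^{2}$ on $H^{1}$ force the binomial condition. This sharpness step is the crux of the argument.
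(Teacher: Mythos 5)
Your cohomology computation is the right starting point and matches the paper's Lemma~\ref{cohomological triviality}: since $\gamma\oplus\tau_{\R P^n}\oplus\epsilon\equiv(n+2)\gamma$, the total Stiefel--Whitney class is $(1+a)^{n+2}$, and the projective bundle relation collapses to $c^{n+1}=0$ precisely when $\binom{n+2}{i}\equiv 0\pmod 2$ for $1\le i\le n$, which by Lucas is exactly $n+2=2^{r}$. The paper also checks, as you anticipate, that the only possible change of variable would be $c\mapsto a+c$, and this fails since it would force $\binom{n+2}{i}\equiv\binom{n+1}{i}$ for all relevant $i$. So far so good. But this criterion gives $n\in\{0,2,6,14,30,62,\ldots\}$, not $n\in\{0,2,6\}$. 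Your parenthetical that ``a short case analysis of the binary expansion \dots\ will pin the solutions down to $n\in\{0,2,6\}$'' is not correct: the cohomology ring obstruction alone is satisfied for every $n=2^{r}-2$, and no clever basis change over $\Z_2$ will change that. The restriction to $\{0,2,6\}$ is a KO-theoretic phenomenon, not a cohomological one.

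The two genuine gaps are precisely where you hedged. For the ``if'' direction, the correct route (and the paper's) is: $\gamma\oplus\tau_{\R P^n}$ is stably equal to $(n+2)\gamma=2^{r}\gamma$; by Adams' computation, $\widetilde{KO}(\R P^{2^{r}-2})$ is cyclic generated by $\gamma-\epsilon$ of order $4$ when $r=2$, $8$ when $r=3$, and $2^{2^{r-1}-1}$ when $r\ge 4$. For $r=2,3$ the order equals $2^{r}$, so $2^{r}\gamma\equiv 2^{r}\epsilon$ stably; since the rank $n+1$ exceeds $n$, the stable triviality promotes to actual triviality, and $P(\gamma\oplus\tau_{\R P^n})\cong P((n+1)\epsilon)=\R P^{n}\times\R P^{n}$. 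For $r\ge 4$ one has $r<2^{r-1}-1$, so $2^{r}\gamma$ is not stably trivial. Vaguely ``invoking Stong's classification'' does not substitute for this computation. For the ``only if'' direction, something more than cohomology is needed to kill $n=14,30,\ldots$, which all pass your cohomological test. The paper's argument compares tangent bundles: using Masuda's identity $\tau_{P(E)}\oplus\epsilon\cong{\rm Hom}(\eta,\pi^{*}E)\oplus\pi^{*}\tau_{X}$, pulling back a hypothetical diffeomorphism via the zero section $\sigma$ of $\pi_1\colon P\to\R P^{n}$, and observing that the tautological line bundle of $P$ can only map to $\eta_{P}$ or $\gamma_{P}\otimes\eta_{P}$, one is forced to either $(n+1)\epsilon\equiv\gamma\oplus\tau_{\R P^{n}}$ (handled by the same KO computation, giving $n=2,6$) or $(n+1)\gamma\equiv\gamma\oplus\tau_{\R P^n}$ (which forces $n=0$ by a direct characteristic-class check on $\gamma\otimes\tau_{\R P^n}$). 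You correctly flag that ``this sharpness step is the crux,'' but you supply no argument for it; without a tangent-bundle or KO-theoretic input, the proof cannot distinguish $n=14$ from $n=6$.
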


The organization of this paper is as follows.
In Sections \ref{sect2} and \ref{sect3}, we recall the basic facts about small covers and projective bundles.
In Section \ref{sect4}, we give a proof of Theorem \ref{main1}, and we also give the following two characterizations of projective bundles of small covers:
(1) the characterization by the twisted product with real moment-angle manifolds;
(2) the combinatorial characterization using simple convex polytopes and some function, like Davis-Januszkiewicz's small cover.
In particular, to do (2), we introduce a new characteristic function on simple convex polytopes, called {\it projective characteristic functions}.
In Section \ref{sect5},
we prove Theorem \ref{main2}.
To do this, we use the characterization (2) and introduce a new combinatorial operation which is the combinatorial analogue of the fibre sum, called a {\it projective fibre sum}.
In Section \ref{sect6}, we classify all topological types of projective bundles over $\R P^{2}$ and $T^{2}$.
In Section \ref{sect7}, we prove Theorem \ref{main3} and propose a question which we call {\it topological triviality problem}.
This problem is motivated by the question asked by Richard Montgomery.
This paper gives the detailed proof for the results stated in \cite{Kuroki2} and also adds some results about the topological triviality problem.

\section{Basic properties of small cover}
\label{sect2}

In this section, we recall the notion of a small cover and the basic facts of its equivariant cohomology.

\subsection{Definition of small covers}
\label{sect2.1}
Let $M^{n}$ be an $n$-dimensional smooth closed manifold, and
$P^{n}$ a {\it simple} convex $n$-polytope, i.e., precisely $n$ {\it facets} (codimension-$1$ faces) of $P^{n}$ meet at each vertex.
Put $\Z_{2}=\{-1,1\}$.
We call $M^{n}$  a {\it small cover} if $M$ admits a $\Z_{2}^{n}$-action such that
\begin{description}
\item[(a)] the $\Z_{2}^{n}$-action is {\it locally standard}, i.e., locally the same as the standard $\Z_{2}^{n}$-action on $\R^{n}$, and
\item[(b)] its orbit space has the structure of a simple convex polytope $P^{n}$, i.e.,
the corresponding orbit projection map
$\pi:M^{n}\to P^{n}$ is constant on $\Z_{2}^{n}$-orbits
and maps every rank $k$ orbit (i.e., every orbit isomorphic to $\Z_{2}^{k}$) to an interior point of
a $k$-dimensional face of the polytope $P^{n}$, $k=0, 1,   \ldots,\ n$.
\end{description}
  It is easy to see that $\pi$ sends $\Z_{2}^{n}$-fixed points in $M^{n}$ to vertices of $P^{n}$ by using the above condition (b).
We often call $P^{n}$ an {\it orbit polytope} of $M$.

\subsection{Construction of small covers}
\label{sect2.2}

Conversely, for a given simple polytope $P^{n}$, the small cover $M^{n}$ with orbit projection $\pi:M^{n}\to P^{n}$
can be reconstructed by using the {\it characteristic function} $\lambda:\mathcal{F}\to (\Z/2\Z)^{n}$,
where $\mathcal{F}$ is the set of all facets in $P$ and $\Z/2\Z=\{0,1\}$.
In this subsection, we recall this construction (see \cite{BP, DJ} for details).

Following the definition of a small cover $\pi:M\to P$, we have that $\pi^{-1}({\rm int}(F^{n-1}))$ consists of $(n-1)$-rank orbits,
in other words, the isotropy subgroup at $x\in \pi^{-1}({\rm int}(F^{n-1}))$
is $K\subset \Z_{2}^{n}$ such that $K\simeq \Z_{2}$,
where ${\rm int} (F^{n-1})$ is the relative interior of the facet $F^{n-1}$.
Hence, the isotropy subgroup at $x$ is determined by a primitive vector $v\in (\Z/2\Z)^{n}$
 such that $(\textbf{-1})^{v}$ generates the subgroup $K$,
where $(\textbf{-1})^{v}=((-1)^{v_{1}},\ldots,(-1)^{v_{n}})$ for $v=(v_{1},\ldots,v_{n})\in (\Z/2\Z)^{n}$.
In this way, we obtain a function $\lambda$ from the set of facets of $P^{n}$, denoted by $\mathcal{F}$, to vectors in $(\Z/2\Z)^{n}$.
We call such $\lambda:\mathcal{F}\to (\Z/2\Z)^{n}$ a {\it characteristic function} or a {\it coloring} on $P^{n}$.
We often describe $\lambda$ as the $(m\times n)$-matrix $\Lambda=(\lambda(F_{1})\cdots \lambda(F_{m}))$ for $\mathcal{F}=\{F_{1},\ldots,F_{m}\}$ with a given ordering,
and we call this matrix a {\it characteristic matrix}.
Since the $\Z_{2}^{n}$-action is locally standard,
a characteristic function has the following property (called {\it the property $(\star)$}):
\begin{description}
\item[$(\star)$] if $F_{i_1}\cap\cdots\cap F_{i_n}\not=\emptyset$ for $F_{i_j}\in \mathcal{F}$ ($j=1,\ \ldots,\ n$),
then $\{\lambda(F_{i_1}), \ldots, \lambda(F_{i_n})\}$ spans $(\Z/2\Z)^{n}$.
\end{description}

 An interesting thing is that one can also construct small covers by using a given $n$-dimensional simple convex polytope $P$ and a characteristic function $\lambda$ with the property $(\star)$.
Let $P$ be an $n$-dimensional simple convex polytope.
Suppose that a characteristic function $\lambda:\mathcal{F}\to (\Z/2\Z)^{n}$ with the above property $(\star)$ is defined on $P$.
Small covers can be constructed from $P$ and $\lambda$ as the quotient space
$\Z_{2}^{n}\times P/\sim_{\lambda}$,
where the equivalence relation $\sim_{\lambda}$ on $\Z_{2}^{n}\times P$ is defined as follows:
$(t, x)\sim_{\lambda} (t', y)$ if and only if $x=y\in P$ and
\begin{eqnarray*}
\begin{array}{ll}
t=t' & {\rm if}\ x\in {\rm int} (P); \\
t^{-1}t'\in \langle (\textbf{-1})^{\lambda(F_{i_1})}, \cdots, (\textbf{-1})^{\lambda(F_{i_r})} \rangle\simeq \Z_{2}^{r} & {\rm if}\ x\in {\rm int} (F_{i_1}\cap\cdots \cap F_{i_r}),
\end{array}
\end{eqnarray*}
where $\langle(\textbf{-1})^{\lambda(F_{i_1})}, \cdots, (\textbf{-1})^{\lambda(F_{i_r})}\rangle\subset \Z_{2}^{n}$
denotes the subgroup generated by $(\textbf{-1})^{\lambda(F_{i_j})}$ for $j=1,\ \ldots,\ r$ with $r\leq n$.
The small cover $\Z_{2}^{n}\times P/\sim_{\lambda}$ defined by this way is usually denoted  by $M(P, \lambda)$.

Summing up, we have the following relations:
\begin{center}
\fbox{
\begin{tabular}{c}
Small covers \\
with $\Z_{2}^{n}$-actions
\end{tabular}
}
$
\begin{array}{c}
\longrightarrow  \\
\longleftarrow
\end{array}
$
\fbox{
\begin{tabular}{c}
Simple convex polytopes \\
with characteristic functions
\end{tabular}
}
\end{center}

\subsection{Equivariant cohomology and ordinary cohomology of small cover}
\label{sect2.3}

In this subsection, we
recall the equivariant cohomology and ordinary cohomology of the small covers (see \cite{BP, DJ} for details).
Let $M=M(P,\lambda)$ be an $n$-dimensional small cover.
We denote an ordered set of  facets of $P$ by $\mathcal{F}=\{F_{1},\ldots,F_{m}\}$ such that $\cap_{i=1}^{n}F_{i}\not=\emptyset$.
Then, we may take the characteristic functions on $F_{1},\ldots,F_{n}$ as
\begin{eqnarray*}
\lambda(F_{i})=e_{i}
\end{eqnarray*}
where $e_{1},\ldots,e_{n}$ are the standard basis vectors of $(\Z/2\Z)^{n}$.
That is, we can write the characteristic matrix as
\begin{eqnarray*}
\Lambda=(I_{n}\ |\ \Lambda'),
\end{eqnarray*}
where $I_{n}$ is the $(n\times n)$-identity matrix and $\Lambda'$ is an $(l\times n)$-matrix, where $l=m-n$.

The {\it equivariant cohomology} of a $G$-manifold $X$ is defined by the ordinary cohomology of $EG\times_{G}X$, where $EG$ is the total space of a universal $G$-bundle, and denoted by $H_{G}^{*}(X)$.
In this paper, we assume the coefficient group of cohomology is $\Z/2\Z$.
Due to \cite{DJ}, the ring structure of the equivariant cohomology of a small cover $M$ is given by the following formula:
\begin{eqnarray*}
H_{\Z_{2}^{n}}^{*}(M)\simeq  \Z/2\Z[\tau_{1},\ldots,\tau_{m}]/\mathcal{I},
\end{eqnarray*}
where the symbol $\Z/2\Z[\tau_{1},\ldots,\tau_{m}]$ represents the polynomial ring generated by the degree $1$ elements $\tau_{i}$ $(i=1,\ldots,m)$,
and the ideal $\mathcal{I}$ is generated by the following monomial elements:
\begin{eqnarray*}
\prod_{i\in I}\tau_{i}
\end{eqnarray*}
where $I$ runs through every subset of $\{1,\ldots,m\}$ such that $\cap_{i\in I}F_{i}=\emptyset$.
On the other hand, the ordinary cohomology ring of $M$ is given by
\begin{eqnarray*}
H^{*}(M)\simeq H_{\Z_{2}^{n}}^{*}(M)/\mathcal{J},
\end{eqnarray*}
where the ideal $\mathcal{J}$ is generated by the following degree $1$ homogeneous elements:
\begin{eqnarray*}
\tau_{i}+\lambda_{i1}x_{1}+\cdots \lambda_{i l}x_{l},
\end{eqnarray*}
for $i=1,\ldots,n$.
Here, $(\lambda_{i1}\cdots \lambda_{i l})$ is the $i$th row vector of $\Lambda'$ ($i=1,\ldots,n$), and
$x_{j}=\tau_{n+j}$ ($j=1,\ldots, l$).

Note that the above ideal $\mathcal{J}$ coincides with the ideal generated by $\pi^{*}(H^{+}(B\Z_{2}^{n}))={\rm Im}\ \pi^{+}$, i.e.,
\begin{eqnarray*}
\mathcal{J}=\langle {\rm Im}\ \pi^{+} \rangle,
\end{eqnarray*}
where $H^{+}(B\Z_{2}^{n})=H^{*}(B\Z_{2}^{n})\setminus H^{0}(B\Z_{2}^{n})$ and $\pi^{*}:H^{*}(B\Z_{2}^{n})\to H^{*}_{\Z_{2}^{n}}(M)$ is the induced homomorphism from
the natural projection $E\Z_{2}^{n}\times_{\Z_{2}^{n}}M\to B\Z_{2}^{n}$, where $B\Z_{2}^{n}=(\R P^{\infty})^{n}$.

\section{General facts of projective bundles}
\label{sect3}

In this section, we recall some general notations and basic facts for projective bundles (see e.g. \cite{CF, Nakaoka} for details).
We first recall the definition of the projective bundle.
Let $\xi$ be a $k$-dimensional, real vector bundle over $M$.
We will denote the total space of $\xi$ by $E(\xi)$, the projection from $E(\xi)$ onto $M$ by $\widetilde{\rho}$, and
the fibre on $x\in M$ by $F_{x}(\xi)$, i.e., $F_{x}(\xi)=\widetilde{\rho}^{-1}(x)$.
Put $\xi_{0}$ the bundle induced by $\xi$ removing the $0$-section.
Then each fibre of $\xi$ has the multiplicative action of $\R^{*}=\R\setminus\{0\}$.
Taking its orbit space, we have the fibre bundle $\rho:P(\xi)\to M$ whose fibre is the $(k-1)$-dimensional real projective space $\R P^{k-1}$.
We call $P(\xi)$ the {\it projective bundle} of $\xi$.
We often denote the fibre of $P(\xi)$ on $x\in M$ by $P_{x}(\xi)$, i.e., $P_{x}(\xi)=\rho^{-1}(x)$.

We next recall the properties of cohomology of projective bundles.
Let $\iota:\R P^{k-1}\simeq P_{x}(\xi)\to P(\xi)$ be the natural embedding.
As is well known, the induced ring homomorphism
\begin{eqnarray}
\label{surjection of proj-bdl}
H^{*}(P(\xi))\stackrel{\iota^{*}}{\longrightarrow} H^{*}(\R P^{k-1})
\end{eqnarray}
is surjective.
On the other hand, the induced ring homomorphism
\begin{eqnarray}
\label{injection of proj-bdl}
H^{*}(M)\stackrel{\rho^{*}}{\longrightarrow} H^{*}(P(\xi))
\end{eqnarray}
is injective.
Moreover, we have the kernel of $\iota^{*}$ is the ideal generated by ${\rm Im}\ \rho^{+}$,
where ${\rm Im}\ \rho^{+}=\rho^{*}(H^{+}(M))$.
We want to consider the ring structure of the cohomology $H^{*}(P(\xi))$.
In order to do this,
we define the following line bundle over $P(\xi)$ associated from $\xi$:
\begin{eqnarray}
\gamma_{\xi}=\sqcup_{x\in M}\{(L,r)\in P_{x}(\xi)\times F_{x}(\xi) \ |\ r\in L \},
\end{eqnarray}
where we regard $L\in P_{x}(\xi)$ as the line in the fibre $F_{x}(\xi)$ of $\xi$.
We call $\gamma_{\xi}$ the {\it tautological (real) line bundle} of $P(\xi)$. 
Note that we have the following diagram:
\begin{eqnarray}
\label{diagram-of-bdls}
\xymatrix{
& E(\gamma_{\xi}) \ar[dr]_{\R} \ar[r] & E(\rho^{*}\xi) \ar[d]_{\R^{k}} \ar[r] & E(\xi) \ar[d]^{\widetilde{\rho}}_{\R^{k}} \\
& \R P^{k-1} \ar[r]^{\iota} & P(\xi) \ar[r]^{\rho} & M
}
\end{eqnarray}
where $\rho^{*}\xi$ is the pull-back of $\xi$ by $\rho$.
Let $w_{i}(\xi)\in H^{i}(M)$ be the $i^{\rm th}$ Stiefel-Whitney class of the $k$-dimensional vector bundle $\xi$ for $i=1,\ \ldots,\ k$,
and $w_{1}(\gamma_{\xi})\in H^{1}(P(\xi))$ be the $1^{\rm st}$ Stiefel-Whitney class of $\gamma_{\xi}$.
Then $\iota^{*}(w_{1}(\gamma_{\xi}))$ is the ring generator of $H^{*}(\R P^{k-1})$.
Because
\begin{eqnarray}
\label{formula-projective-space}
H^{*}(\R P^{k-1})\simeq \Z/2\Z[a]/\langle a^{k} \rangle
\end{eqnarray}
for $\deg a=1$, we have $\iota^{*}(w_{1}(\gamma_{\xi})^{k})=0$ in $H^{*}(\R P^{k-1})$.
However,
$w_{1}(\gamma_{\xi})^{k}$ might not be zero in $H^{k}(P(\xi))$.
The following formula, called the {\it Borel-Hirzebruch formula}, tells us the explicit formula of this element (see \cite{Borel-H} or \cite[(23.3)]{CF}):
\begin{eqnarray}
\label{BH}
w_{1}(\gamma_{\xi})^{k}=\sum_{i=1}^{k}\rho^{*}(w_{i}(\xi))w_{1}(\gamma_{\xi})^{k-i}.
\end{eqnarray}
Therefore, together with \eqref{injection of proj-bdl}, $H^{*}(P(\xi))$ is isomorphic to
\begin{eqnarray}
\label{algebra structure}
H^{*}(M)[x]/\langle \sum_{i=0}^{k}\rho^{*}(w_{i}(\xi)) x^{k-i} \rangle
\end{eqnarray}
as the $H^{*}(M)$-algebra, where $x=w_{1}(\gamma_{\xi})$ and $\rho^{*}(w_{i}(\xi))$ is regarded as the element in $H^{*}(M)$ (because of the injectivity of $\rho^{*}$).
Moreover, by using the Borel-Hirzebruch formula, we have the following proposition.
\begin{proposition}
\label{cohomology}
Let $M$ be a closed manifold, and $\xi$ a $k$-dimensional real vector bundle, where $k>1$.
Then the following two statements are equivalent:
\begin{enumerate}
\item $H^{*}(P(\xi))\simeq H^{*}(M\times \R P^{k-1})$;
\item $w(\xi)=(1+X)^{k}$ for some $kX\equiv_{2} w_{1}(\xi)\in H^{1}(M)$, where $k\equiv_{2} 0,\ 1$.
\end{enumerate}
\end{proposition}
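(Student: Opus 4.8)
The plan is to read off both cohomology rings from the presentations already recorded above and to compare the two defining relations. By \eqref{algebra structure}, writing $x=w_{1}(\gamma_{\xi})$ and regarding each $\rho^{*}(w_{i}(\xi))$ as an element of $H^{*}(M)$ via the injection \eqref{injection of proj-bdl}, one has an isomorphism of $H^{*}(M)$-algebras
\begin{eqnarray*}
H^{*}(P(\xi))\simeq H^{*}(M)[x]/\langle f_{\xi}(x)\rangle,\qquad f_{\xi}(x)=\sum_{i=0}^{k}\rho^{*}(w_{i}(\xi))\,x^{k-i},
\end{eqnarray*}
where $f_{\xi}$ is monic in $x$ and, since $\deg\rho^{*}(w_{i}(\xi))=i$, homogeneous of total degree $k$. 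Likewise the K\"unneth theorem with $\Z/2\Z$-coefficients together with \eqref{formula-projective-space} yields an $H^{*}(M)$-algebra isomorphism
\begin{eqnarray*}
H^{*}(M\times\R P^{k-1})\simeq H^{*}(M)\tensor H^{*}(\R P^{k-1})\simeq H^{*}(M)[y]/\langle y^{k}\rangle ,
\end{eqnarray*}
again a free $H^{*}(M)$-module of rank $k$ on $1,y,\ldots,y^{k-1}$. Thus the proposition says that the monic degree-$k$ relation $f_{\xi}$ can be ``untwisted'' to $y^{k}$ exactly when $w(\xi)=(1+X)^{k}$.

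I would first establish the $H^{*}(M)$-algebra version of the equivalence, which is the heart of the matter. If $w(\xi)=(1+X)^{k}$, then $w_{i}(\xi)=\binom{k}{i}X^{i}$ for all $i$ (the instance $i=1$ being precisely the stated compatibility $kX\equiv_{2}w_{1}(\xi)$, hence automatic), so $f_{\xi}(x)=\sum_{i}\binom{k}{i}\rho^{*}(X)^{i}x^{k-i}=(x+\rho^{*}(X))^{k}$; the $H^{*}(M)$-algebra homomorphism fixing $H^{*}(M)$ and sending $x\mapsto y+\rho^{*}(X)$ carries $f_{\xi}(x)$ to $y^{k}$, is invertible (inverse $y\mapsto x+\rho^{*}(X)$), and so descends to the desired isomorphism; this already gives $(2)\Rightarrow(1)$. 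Conversely, an $H^{*}(M)$-algebra homomorphism out of $H^{*}(M)[x]/\langle f_{\xi}(x)\rangle$ is determined by the image of $x$, which for it to be surjective must be $y$ plus a class from $H^{1}(M)$, say $y+\rho^{*}(X)$; well-definedness then amounts to $f_{\xi}(y+\rho^{*}(X))\equiv 0$ modulo $y^{k}$, and since this expression has $y$-degree $\le k$ with leading coefficient $1$, it is equivalent to $f_{\xi}(x)=(x+\rho^{*}(X))^{k}$ in $H^{*}(M)[x]$, whence $w_{i}(\xi)=\binom{k}{i}X^{i}$ and $w(\xi)=(1+X)^{k}$ by injectivity of $\rho^{*}$.

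There remains the point that the proposition is phrased for bare ring isomorphisms, and here lies what I expect to be the main obstacle: an abstract ring isomorphism $\phi\colon H^{*}(P(\xi))\to H^{*}(M\times\R P^{k-1})$ need not carry $\rho^{*}(H^{*}(M))$ onto the canonical copy of $H^{*}(M)$, nor $x$ into $H^{1}(M)\oplus\Z/2\Z\cdot y$ in a controlled way, so one cannot simply ``divide out the base''. To handle $(1)\Rightarrow(2)$ in this generality I would analyse the finite-dimensional graded $\Z/2\Z$-algebra $H^{*}(M)[y]/\langle y^{k}\rangle$ intrinsically --- pinning down which degree-one classes $v$ satisfy $v^{k-1}\neq 0$, and which subalgebras $S\cong H^{*}(M)$ it is free of rank $k$ over on $1,v,\ldots,v^{k-1}$ --- and then, writing $\phi(x)=c+\varepsilon y$ with $c\in H^{1}(M)$ and $\varepsilon\in\Z/2\Z$, rule out $\varepsilon=0$ by a rank-and-nilpotency count (using $k>1$) and, for $\varepsilon=1$, post-compose $\phi$ with an automorphism of the target to reduce to the $H^{*}(M)$-algebra case already settled. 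Once $\phi$ is normalised in this way, the coefficient comparison of the previous paragraph applies verbatim.
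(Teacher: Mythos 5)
Your proof of $(2)\Rightarrow(1)$, and of $(1)\Rightarrow(2)$ under the assumption that the isomorphism is $H^{*}(M)$-linear, is correct and is in substance the paper's proof: both exploit the Borel--Hirzebruch presentation $H^{*}(P(\xi))\cong H^{*}(M)[x]/\langle f_{\xi}(x)\rangle$, the $H^{*}(M)$-freeness of $1,x,\ldots,x^{k-1}$, and the injectivity of $\rho^{*}$ to compare $f_{\xi}(x)$ coefficientwise with $(x+\rho^{*}(X))^{k}$. You are also right to observe that the proposition is phrased for a bare ring isomorphism, whereas the coefficient comparison only applies after the isomorphism is known to preserve the copy of $H^{*}(M)$; the paper's own argument has exactly the same tacit hypothesis --- the step in which it writes $A=w_{1}(\gamma_{\xi})+\epsilon_{1}\eta_{1}+\cdots+\epsilon_{m}\eta_{m}$ presupposes that the new presentation of $H^{*}(P(\xi))$ uses the same classes $\eta_{i}=\rho^{*}(\eta_{i})$, which an abstract ring isomorphism does not supply.

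However, the repair you sketch in the final paragraph would fail as written. The claim that $\varepsilon=0$ can be ruled out by a rank-and-nilpotency count is false: take $M=T^{2}$, $k=2$, $\xi$ trivial, so that $H^{*}(P(\xi))\cong\Z/2\Z[a,b,x]/\langle a^{2},b^{2},x^{2}\rangle$ and $H^{*}(M\times\R P^{1})\cong\Z/2\Z[a,b,y]/\langle a^{2},b^{2},y^{2}\rangle$; the cyclic permutation of generators $a\mapsto b$, $b\mapsto y$, $x=w_{1}(\gamma_{\xi})\mapsto a$ is a graded ring isomorphism with $\phi(x)\in H^{1}(M)$ and no $y$-component. (Here $w(\xi)=1=(1+0)^{k}$, so the proposition's conclusion is of course fine; it is only your normalization step that breaks.) Nor do you supply any reason that one can always post-compose with an automorphism of $H^{*}(M)[y]/\langle y^{k}\rangle$ carrying $\phi(\rho^{*}(H^{*}(M)))$ back onto the canonical $H^{*}(M)$: this graded algebra may contain several rank-$k$-free subalgebras abstractly isomorphic to $H^{*}(M)$, and one would need them to form a single orbit under automorphisms, which is not addressed. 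So your core argument matches the paper's and your diagnosis of its implicit assumption is accurate, but the extra paragraph intended to cover arbitrary ring isomorphisms is a plan with a false first move rather than a proof; the honest scope of both your argument and the paper's is the $H^{*}(M)$-algebra statement.
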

\begin{proof}
Suppose that
the cohomology ring of $M$ satisfies that
\begin{eqnarray}
\label{formula-M}
H^{*}(M)=\Z/2\Z[\eta_{1}, \cdots, \eta_{m}]/\langle f_{j}\ |\ j=1, \ldots, l \rangle
\end{eqnarray}
for some polynomial $f_{j}=f_{j}(\eta_{1}, \cdots, \eta_{m})$ and generators $\eta_{1},\ \ldots,\ \eta_{m}$.
Because $\rho^{*}:H^{*}(M)\to H^{*}(P(\xi))$ is injective, we may regard $\eta_{1},\ \ldots,\ \eta_{m}$ as the generators in $H^{*}(P(\xi))$.
Moreover, since $\iota^{*}(w_{1}(\gamma_{\xi}))$ is the generator of $H^{*}(\R P^{k-1})$,
we may denote the cohomology ring of $H^{*}(P(\xi))$ as follows:
\begin{eqnarray}
\label{formula}
& &H^{*}(P(\xi); \Z/2\Z) \\
&\simeq & \Z/2\Z[\eta_{1}, \ldots, \eta_{m},\ w_{1}(\gamma_{\xi})]/\langle f_{j},\
w_{1}(\gamma_{\xi})^{k}-\sum_{i=1}^{k}\rho^{*}(w_{i}(\xi))w_{1}(\gamma_{\xi})^{k-i}\ |\ j=1,\ldots, l \rangle \nonumber
\end{eqnarray}
by using the Borel-Hirzebruch formula \eqref{BH}.

Assume that the statement (1) holds, that is, $H^{*}(P(\xi))\simeq H^{*}(M\times \R P^{k-1})$.
Then, we may put
\begin{eqnarray}
\label{assumption(1)}
H^{*}(P(\xi))
\simeq \Z/2\Z[\eta_{1}, \ldots, \eta_{m},\ A]/\langle f_{j},\ A^{k}\ |\ j=1,\ldots,l \rangle,
\end{eqnarray}
for some $A\in H^{1}(P(\xi))$.
Comparing (\ref{formula}) and (\ref{assumption(1)}), we may write
\begin{eqnarray}
\label{formula-A}
A &=& w_{1}(\gamma_{\xi})+ \epsilon_{1}\eta_{1}+\cdots +\epsilon_{m}\eta_{m} \\ 
  &=& w_{1}(\gamma_{\xi})+X \nonumber
\end{eqnarray}
for some $\epsilon_{i}\in \Z/2\Z$ ($i=1,\ \ldots,\ m$).
Therefore, we have
\begin{eqnarray}
\label{equation-A}
A^{k} &\stackrel{(\ref{formula-A})}{=}& (w_{1}(\gamma_{\xi})+X)^{k}
\equiv_{2} w_{1}(\gamma_{\xi})^{k}+\sum_{i=1}^{k}
\left(
\begin{array}{c}
k \\
i
\end{array}
\right)
X^{i}w_{1}(\gamma_{\xi})^{k-i}  \\ \nonumber
&\stackrel{(\ref{BH})}{\equiv_{2}}&
\sum_{i=1}^{k}\rho^{*}(w_{i}(\xi))w_{1}(\gamma_{\xi})^{k-i}+\sum_{i=1}^{k}
\left(
\begin{array}{c}
k \\
i
\end{array}
\right)
X^{i}w_{1}(\gamma_{\xi})^{k-i} \\ \nonumber
&\stackrel{(\ref{assumption(1)})}{\equiv_{2}}& 0.
\end{eqnarray}
Due to the $H^{*}(M)$-algebraic structure of $H^{*}(P(\xi))$ in \eqref{algebra structure}, we have that
$w_{1}(\gamma_{\xi})^{0},\ \ldots,\ w_{1}(\gamma_{\xi})^{k-1}$ are the $H^{*}(M)$-module generators of $H^{*}(P(\xi))$.
Therefore,
the equation \eqref{equation-A} implies that
\begin{eqnarray*}
\rho^{*}(w_{i}(\xi))\equiv_{2}\left(
\begin{array}{c}
k \\
i
\end{array}
\right)
X^{i}.
\end{eqnarray*}
Hence, because $\rho^{*}$ is injective, we may denote
\begin{eqnarray*}
w(\xi)=(1+X)^{k},
\end{eqnarray*}
and
\begin{eqnarray*}
w_{1}(\xi)\equiv_{2}kX
\end{eqnarray*}
 where $k\equiv_{2}0$ or $1$, and $w(\xi)$ is the total Stiefel-Whitney class of $\xi$.
This establishes the statement (2).

Assume that the statement (2) holds, that is, $w(\xi)=(1+X)^{k}$.
By using (\ref{BH}) and the injectivity of $\rho^{*}$, one can easily show that $(w_{1}(\gamma_{\xi})+X)^{k}=0$.
Using (\ref{formula-projective-space}) and (\ref{formula-M}), we may put
\begin{eqnarray}
\label{product-formula}
H^{*}(M\times \R P^{k-1})=\Z/2\Z[\eta_{1}, \ldots, \eta_{m},\ a]/\langle f_{j},\ a^{k}\ |\ j=1,\ldots,l \rangle,
\end{eqnarray}
for some $a\in H^{1}(M\times \R P^{k-1})$.
Therefore, using (\ref{formula}) and the above (\ref{product-formula}), there is the following isomorphism from $H^{*}(M\times \R P^{k-1})$ to $H^{*}(P(\xi))$:
\begin{eqnarray*}
& &\varphi: \eta_{i}\mapsto \eta_{i}\quad (i=1,\ \ldots,\ m); \\
& &\varphi: a \mapsto w_{1}(\gamma_{\xi})+X.
\end{eqnarray*}
This establishes the statement (1).
\end{proof}

\section{Projective bundles over small covers}
\label{sect4}

In this section, we introduce some notations and basic facts for projective bundles over small covers.
We first recall the definition of a $G$-equivariant vector bundle over $G$-space $M$ (also see the notations in Section \ref{sect3}).
A {\it $G$-equivariant vector bundle} is a vector bundle $\xi$ over $G$-space $M$
together with a lift of the $G$-action to $E(\xi)$ by fibrewise linear transformations,
i.e., $E(\xi)$ is also a $G$-space, the projection $E(\xi)\to M$ is $G$-equivariant and
the induced fibre isomorphism between $F_{x}(\xi)$ and $F_{gx}(\xi)$ is
linear, for all $x\in M$ and $g\in G$.

Before we state the first main result, we give two examples of small covers which is constructed by projectivization of vector bundles:
\begin{example}[generalized real Bott manifold]
A {\it generalized real Bott manifold} of height $m$ is an  
iterated real projective fibration defined as a sequence of real projective fibrations
\begin{eqnarray*}
\xymatrix{
& \R B_{m} \ar[r]^{\pi_{m} \quad } & \R B_{m-1} \ar[r]^{\pi_{{m-1}}} & \cdots  \ar[r]^{\pi_{2} } & \R B_{1} \ar[r]^{\pi_{1} \quad \quad \quad } & \R B_{0}=\{\text{a point}\}
}
\end{eqnarray*}
where $\R B_{i}=P(\gamma_{i_{1}}\oplus\cdots\gamma_{i_{l}})$ is the projectivization of a 
Whitney sum of line bundles over $\R B_{i-1}$.
Note that $\R B_{1}$ is just the real projective space.
If the dimension of each fibre is exactly $1$, then this is called a {\it real Bott manifold}.
See \cite{M} for details.
\end{example}

\begin{example}[Stong manifold]
Let $\pi_{i}:B=\R P^{n_{1}}\times\cdots\times \R P^{n_{l}}\to \R P^{n_{i}}$ be the natural projection, for $i=1,\ldots,l$.
We define the line bundle $\gamma_{i}$ over $B$ by the pull-back of the tautological line bundle over $\R P^{n_{i}}$ along $\pi_{i}$.
Then, a {\it Stong manifold} $S$ is defined by the following projectivization over $B$:
\[
S=P(\gamma_{1}\oplus\cdots\oplus\gamma_{l})\to B.
\]
It is easy to check that this is a generalized real Bott manifold.
\end{example}

\subsection{Necessary and sufficient conditions of when $P(\xi)$ is a small cover}
\label{sect4.1}

From this section to Section \ref{sect6}, we assume $M$ is an $n$-dimensional small cover, and
$\xi$ is a $k$-dimensional, $\Z_{2}^{n}$-equivariant vector bundle over $M$.
The following proposition gives a criterion for the projective bundle $P(\xi)$ to be a small cover:
\begin{theorem}
\label{basic1}
The projective bundle $P(\xi)$ of $\xi$ is a small cover if and only if the equivariant vector bundle $\xi$ decomposes into the Whitney sum of line bundles, i.e.,
$\xi\equiv\gamma_{1}\oplus\cdots\oplus\gamma_{k-1}\oplus\gamma_{k}$.
\end{theorem}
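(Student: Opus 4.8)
The plan is to prove both directions by relating the $\mathbb{Z}_2^n$-action on $P(\xi)$ to a locally standard $\mathbb{Z}_2^{n+k-1}$-action, using the fact that $P(\xi)$ has dimension $n+k-1$. For the \emph{if} direction, suppose $\xi \equiv \gamma_1 \oplus \cdots \oplus \gamma_k$ is a Whitney sum of $\mathbb{Z}_2^n$-equivariant line bundles. Each $\gamma_i$ carries an extra fibrewise $\mathbb{Z}_2$-action (scaling by $\pm 1$), and the diagonal action of $\mathbb{Z}_2^k$ on $E(\gamma_1 \oplus \cdots \oplus \gamma_k)$ commutes with the $\mathbb{Z}_2^n$-action and with the $\mathbb{R}^*$-scaling that defines the projectivization. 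This descends to a $\mathbb{Z}_2^{k-1} = \mathbb{Z}_2^k / \mathbb{Z}_2^{\mathrm{diag}}$-action on the fibres $\mathbb{R}P^{k-1}$ of $P(\xi)$, which is the standard action of $\mathbb{Z}_2^{k-1}$ on $\mathbb{R}P^{k-1}$; combined with the $\mathbb{Z}_2^n$-action coming from $M$, we get a $\mathbb{Z}_2^{n+k-1}$-action on $P(\xi)$. I would then check that this action is locally standard by working in a local trivialization of $\xi$ over a standard chart $\mathbb{R}^n$ of $M$: there $P(\xi)$ looks like $\mathbb{R}^n \times \mathbb{R}P^{k-1}$ with the product of the standard $\mathbb{Z}_2^n$-action and the standard $\mathbb{Z}_2^{k-1}$-action, which is the standard $\mathbb{Z}_2^{n+k-1}$-action on a chart; and the orbit space is $P^n \times \Delta^{k-1}$, a simple convex polytope. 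Hence $P(\xi)$ is a small cover.

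For the \emph{only if} direction, suppose $P(\xi)$ is a small cover, so it carries a locally standard $\mathbb{Z}_2^{n+k-1}$-action with simple polytope orbit space $Q^{n+k-1}$. The $\mathbb{Z}_2^n$-action from $M$ is a subtorus action; since $\rho\colon P(\xi)\to M$ is equivariant with fibre $\mathbb{R}P^{k-1}$, restricting the full torus to the "vertical" directions must give a faithful locally standard action on the generic fibre $\mathbb{R}P^{k-1}$, forcing the vertical part to be a $\mathbb{Z}_2^{k-1}$ acting as the standard action (this is the only locally standard $\mathbb{Z}_2^{k-1}$-action on $\mathbb{R}P^{k-1}$, with orbit space $\Delta^{k-1}$). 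The key point is to promote this fibrewise structure to a global splitting of $\xi$. I would analyze the fixed-point set: the $\mathbb{Z}_2^{k-1}$-fixed points in each fibre $\mathbb{R}P^{k-1}$ are exactly the $k$ coordinate points, so the full fixed locus of the vertical subtorus gives $k$ disjoint sections (or at least $k$ disjoint subsets mapping to $M$), and each such section $s_j\colon M \to P(\xi)$, being the fixed set of a subtorus, is itself a small cover and corresponds to a $\mathbb{Z}_2^n$-equivariant sub-line-bundle $\gamma_j = s_j^*\gamma_\xi \subset \xi$. One then shows these $k$ line bundles are pointwise linearly independent — again a fibrewise statement that holds because the standard-action coordinate points span — so $\xi \equiv \gamma_1 \oplus \cdots \oplus \gamma_k$ as $\mathbb{Z}_2^n$-equivariant bundles.

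The main obstacle, I expect, is the \emph{only if} direction — specifically, extracting the $k$ global equivariant sub-line-bundles and knowing they split $\xi$ equivariantly, rather than just topologically. The subtlety is that a priori the locally standard $\mathbb{Z}_2^{n+k-1}$-action need not restrict to the "obvious" subtorus in a way compatible with $\rho$; one has to argue that any locally standard action on $P(\xi)$ restricting to the given $\mathbb{Z}_2^n$-action must have its complementary $\mathbb{Z}_2^{k-1}$ acting fibrewise-linearly, which uses the rigidity of locally standard actions near fixed points together with the structure of $\mathbb{R}P^{k-1}$ as a small cover over $\Delta^{k-1}$. Once the fibrewise linear structure is in place, identifying the fixed sections with equivariant line subbundles and checking independence are routine, and the equivariant Whitney sum decomposition follows by an equivariant version of the standard argument that $k$ independent line subbundles of a rank-$k$ bundle split it.
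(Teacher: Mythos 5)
Your ``if'' direction is essentially the paper's argument, with the local-standardness check and the identification of the orbit polytope as $P^n\times\Delta^{k-1}$ spelled out more explicitly than the paper does. The ``only if'' direction takes a genuinely different route. The paper argues at the level of the total space $E(\xi)$: the effective $\Z_{2}^{k-1}$-action on each $P_{x}(\xi)$ lifts to a $\Z_{2}^{k}$-action on $F_{x}(\xi)\cong\R^{k}$; after reducing the structure group of $\xi$ to $O(k)$, the transition functions $A(u)$ must commute with this $\Z_{2}^{k}$ in order for the $\Z_{2}^{k}$-action to be globally defined, and since the centralizer of the diagonal $\Z_{2}^{k}$ in $O(k)$ is $\Z_{2}^{k}$ itself, the structure group reduces to $\Z_{2}^{k}$, which is precisely an equivariant splitting into line bundles. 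Your approach instead extracts $k$ fixed-point sections $s_{j}$ of the vertical subtorus and pulls back $\gamma_{\xi}$ along each. That can be made to work, but there is a real gap you yourself flag: you need the fixed locus of the vertical $\Z_{2}^{k-1}$ to be a \emph{trivial} $k$-fold cover of $M$, i.e.\ $k$ disjoint sections, and a priori it could have monodromy permuting the fibrewise fixed points. That monodromy is exactly the obstruction to splitting $\xi$, so without a separate argument the reduction is close to circular. It can be patched — the $k$ fixed points in a fibre $\R P^{k-1}$ have pairwise distinct $\Z_{2}^{k-1}$-isotropy representations on their normal spaces, and this discrete data is locally constant over $M$, forcing the cover to be trivial — but the paper's centralizer computation kills the monodromy in one stroke and is cleaner. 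Finally, note that both proofs share the tacit hypothesis you point out: the locally standard $\Z_{2}^{n+k-1}$-action is assumed compatible with $\rho$ in the sense that the given $\Z_{2}^{n}$-action sits inside it as a subtorus and the complementary $\Z_{2}^{k-1}$ acts fibrewise; the paper uses this without comment.
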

\begin{proof}
Assume that $P(\xi)$ is a small cover.
By definition, $P(\xi)$ has a locally standard $(\Z_{2}^{n}\times \Z_{2}^{k-1})$-action.
Because $\xi$ is a $k$-dimensional, $\Z_{2}^{n}$-equivariant vector bundle over $M$,
the projection $\rho:P(\xi)\to M$ is $\Z_{2}^{n}$-equivariant.
In particular, the $\Z_{2}^{k-1}$-action is trivially acts on $M$.
Therefore, each fibre $P_{x}(\xi)$ over $x\in M$ has an effective $\Z_{2}^{k-1}$-action.
This implies that there is the $\Z_{2}^{k}$-action on $F_{x}(\xi)$ such that
$(F_{x}(\xi)\setminus \{0\})/\R^{*}$ is $\Z_{2}^{k-1}$-equivariantly homeomorphic to $P_{x}(\xi)$,
where $F_{x}(\xi)\cong \R^{k}$ is the fibre of $E(\xi)$ over $x\in M$.
Hence, the total space $E(\xi)$ of $\xi$ has a $(\Z_{2}^{n}\times \Z_{2}^{k})$-action and
the restricted $\Z_{2}^{n}$-action is induced from the lift of the $\Z_{2}^{n}$-action on $M$.
Let $\{U_{i}\}_{i\in \mathcal{I}}$ be a $\Z_{2}^{n}$-equivariant open covering of $M$.
Then, by using the local triviality condition of the vector bundle,
we may denote $\xi$ as the gluing of $U_{i}\times \R^{k}$ for $i\in \mathcal{I}$, say $\amalg_{i\in \mathcal{I}} (U_{i}\times \R^{k})/\sim$.
Here, the symbol $\sim$ represents the identification $(u,x)\sim (u,y)$ for
$u\in U_{i}\cap U_{j}$ by $x=A(u)y\in\R^{k}$ for some transition function $A(u)\in GL(k;\R)$. 
Here, because $M$ is a small cover (in particular smooth closed manifold), we may reduce the structure group into the orthogonal group $O(k)$ and we can take $A(u)\in O(k)$.
Therefore, if the $\Z_{2}^{k}$-action on the $\R^{k}$-factor in $U_{i}\times \R^{k}$ extends to
the global action on $\amalg (U_{i}\times \R^{k})/\sim$,
then the transition function $A(u)\in O(k)$ must commute with $\Z_{2}^{k}$ for all $u\in U_{i}\cap U_{j}$.
Note that we may regard $\Z_{2}^{k}$ as the diagonal subgroup in $O(k)$ up to conjugation.
Because the centralizer of $\Z_{2}^{k}$ (the diagonal subgroup) in $O(k)$ is $\Z_{2}^{k}$ (the diagonal subgroup) itself,
we have $A(u)\in \Z_{2}^{k}\subset O(k)$ for all $u\in U_{i}\cap U_{j}$.
This implies that the structure group of $\xi$ is $\Z_{2}^{k}$. 
This is nothing but
$\xi\equiv\gamma_{1}\oplus\cdots\oplus\gamma_{k-1}\oplus\gamma_{k}$.

Conversely, if $\xi\equiv\gamma_{1}\oplus\cdots\oplus\gamma_{k-1}\oplus\gamma_{k}$, then we can easily check that
this vector bundle has the $\Z_{2}^{k}$-action along fibre and $P(\xi)$ has the induced locally standard $(\Z_{2}^{n}\times \Z_{2}^{k-1})$-action.
\end{proof}

As is well known, $P(\xi\otimes\gamma)\cong P(\xi)$ (homeomorphic) for all line bundle $\gamma$ (e.g. see \cite{MS}).
Hence, by using the above Proposition \ref{basic1}, the following corollary holds.
\begin{corollary}
\label{cor-basic}
Let $M$ be a small cover, and $\xi$ be a Whitney sum of $k$ line bundles over $M$.
Then the small cover $P(\xi)$ is homeomorphic to
\begin{eqnarray*}
P(\gamma_{1}\oplus\cdots\gamma_{k-1}\oplus\epsilon),
\end{eqnarray*}
where $\epsilon$ is the trivial line bundle over $M$.
\end{corollary}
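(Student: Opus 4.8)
The plan is to invoke the homeomorphism $P(\xi\otimes\gamma)\cong P(\xi)$, valid for an arbitrary real line bundle $\gamma$ over $M$ and recalled above, and to choose $\gamma$ cleverly. Writing $\xi\equiv\gamma_{1}\oplus\cdots\oplus\gamma_{k}$ as a Whitney sum of line bundles and taking $\gamma=\gamma_{k}$, the distributivity of the tensor product over Whitney sums gives a natural vector bundle isomorphism
\[
\xi\otimes\gamma_{k}\;\cong\;(\gamma_{1}\otimes\gamma_{k})\oplus\cdots\oplus(\gamma_{k-1}\otimes\gamma_{k})\oplus(\gamma_{k}\otimes\gamma_{k}),
\]
hence $P(\xi)\cong P\big((\gamma_{1}\otimes\gamma_{k})\oplus\cdots\oplus(\gamma_{k-1}\otimes\gamma_{k})\oplus(\gamma_{k}\otimes\gamma_{k})\big)$.

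Next I would identify $\gamma_{k}\otimes\gamma_{k}$ with the trivial line bundle $\epsilon$. One clean way: for a real line bundle, the canonical evaluation gives $\gamma_{k}\otimes\gamma_{k}^{\vee}\cong\underline{\mathrm{Hom}}(\gamma_{k},\gamma_{k})\cong\epsilon$ (the identity endomorphism is a nowhere-zero section), and $\gamma_{k}^{\vee}\cong\gamma_{k}$ since both have the same first Stiefel--Whitney class; alternatively, endowing $\gamma_{k}$ with a metric reduces its structure group to $O(1)=\Z_{2}$, so on a trivializing cover the local unit sections $s_{i}$ satisfy $s_{i}=\pm s_{j}$ on overlaps, whence the sections $s_{i}\otimes s_{i}$ agree on overlaps and patch to a global nowhere-vanishing section of $\gamma_{k}\otimes\gamma_{k}$. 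Either way, $\gamma_{k}\otimes\gamma_{k}\cong\epsilon$.

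Setting $\gamma_{i}':=\gamma_{i}\otimes\gamma_{k}$ for $i=1,\ldots,k-1$ then yields $P(\xi)\cong P(\gamma_{1}'\oplus\cdots\oplus\gamma_{k-1}'\oplus\epsilon)$, which is the claim; by Theorem \ref{basic1} this target is again a small cover. There is essentially no obstacle here: the only mild subtlety, and one not needed for the stated conclusion, would be to check that all the homeomorphisms above can be taken equivariant with respect to the relevant $\Z_{2}^{n}\times\Z_{2}^{k-1}$-actions, which follows because the fibrewise maps used (the bijection $L\mapsto L\otimes(\gamma_{k})_{x}$ between lines, and the canonical trivialization of $\gamma_{k}\otimes\gamma_{k}$) are natural and therefore intertwine the equivariant structures.
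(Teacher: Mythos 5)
Your argument is the same one the paper uses: twist by $\gamma_{k}$ via $P(\xi\otimes\gamma)\cong P(\xi)$, distribute the tensor over the Whitney sum, and use $\gamma_{k}\otimes\gamma_{k}\cong\epsilon$. Your write-up just spells out the distributivity step and gives two justifications of $\gamma_{k}\otimes\gamma_{k}\cong\epsilon$, both correct.
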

\begin{proof}
Assume $\xi\equiv \gamma_{1}'\oplus\cdots\oplus\gamma_{k-1}'\oplus\gamma_{k}'$.
Then we have that
\begin{eqnarray*}
P(\gamma_{1}'\oplus\cdots\oplus\gamma_{k-1}'\oplus\gamma_{k}')
\cong P((\gamma_{1}'\otimes\gamma_{k}')\oplus\cdots\oplus(\gamma_{k-1}'\otimes\gamma_{k}')\oplus\epsilon),
\end{eqnarray*}
because $\gamma_{k}'\otimes\gamma_{k}'\equiv\epsilon$.
This establishes the statement.
\end{proof}

In this paper, the projective bundle in Corollary \ref{cor-basic} (also see Section \ref{sect1}) is said to be the {\it projective bundle over small cover}.

\subsection{Structures of projective bundles over small covers}
\label{sect4.2}

In this subsection, we show the quotient construction of the projective bundles of small covers.
First, we recall the moment-angle manifold of small covers (see \cite{BP, DJ}).
Let $P$ be a simple, convex polytope and
$\mathcal{F}$ the set of its facets $\{F_{1},\ \cdots,\ F_{m}\}$.
We denote by $\mathcal{Z}_{P}$ the manifolds
\begin{eqnarray*}
\mathcal{Z}_{P}=\Z_{2}^{m}\times P/\sim,
\end{eqnarray*}
where $(t,\ p)\sim (t',\ p)$ is defined by $t^{-1}t'\in \prod_{p\in F_{i}}\Z_{2}(i)$
($\Z_{2}(i)\subset \Z_{2}^{m}$ is the rank $1$ subgroup generated by the $i$-th factor),
and we call it a {\it moment-angle manifold} of $P$.
We note that if $P=M^{n}/\Z_{2}^{n}$ then there is the subgroup $K\subset \Z_{2}^{m}$ such that $K\simeq \Z_{2}^{m-n}$ and $K$ acts freely on $\mathcal{Z}_{P}$.
Therefore, we can denote the small cover $M=\mathcal{Z}_{P}/\Z_{2}^{l}$ by the free $\Z_{2}^{l}$-action on $\mathcal{Z}_{P}$ for $l=m-n$.

Since $[M;\ B\Z_{2}]=H^{1}(M;\ \Z_{2})\simeq \Z_{2}^{l}$ (see \cite{DJ, S}),
we see that all line bundles $\gamma$ can be written as follows:
\begin{eqnarray*}
\gamma\equiv \mathcal{Z}_{P}\times_{\Z_{2}^{l}}\R_{\alpha},
\end{eqnarray*}
where $\Z_{2}^{l}$ acts on $\R_{\alpha}=\R$ by some representation $\alpha:\Z_{2}^{l}\to \Z_{2}$.
Moreover, its total Stiefel-Whitney class is
$w(\mathcal{Z}_{P}\times_{\Z_{2}^{l}}\R)=1+\delta_{1}x_{1}+\cdots +\delta_{l}x_{l}$,
where $(\delta_{1}, \cdots, \delta_{l})\in \{0,\ 1\}^{l}$
is induced by a representation $\Z_{2}^{l}\to \Z_{2}$, i.e.,
\begin{eqnarray*}
(\epsilon_{1},\ \cdots,\ \epsilon_{l})\mapsto \epsilon_{1}^{\delta_{1}}\cdots \epsilon_{l}^{\delta_{l}},
\end{eqnarray*}
for $\epsilon_{i}\in \Z_{2}$,
and $x_{1},\ \ldots,\ x_{l}$ are the degree $1$ generators of $H^{*}(M)$ introduced in Section \ref{sect2.3}.
Therefore, by using Corollary \ref{cor-basic}, all projective bundles of small covers are as follows:
\begin{eqnarray}
\label{proj}
P(\xi)=\mathcal{Z}_{P}\times_{\Z_{2}^{l}}(\R^{k}\setminus \{0\})/\R^{*}=\mathcal{Z}_{P}\times_{\Z_{2}^{l}}\R P^{k-1},
\end{eqnarray}
where
\begin{eqnarray*}
\xi=\mathcal{Z}_{P}\times_{\Z_{2}^{l}}\R^{k}
\end{eqnarray*}
with the $\Z_{2}^{l}$-representation space $\R^{k}=\R_{\alpha_{1}}\oplus\cdots\oplus\R_{\alpha_{k}}$ such that
\begin{eqnarray*}
\alpha_{i}:\Z_{2}^{l}\to \Z_{2}
\end{eqnarray*}
where $i=1,\ \cdots,\ k$ and $\alpha_{k}$ is the trivial representation.
Then,
we may denote the projective bundle of small cover by
\begin{eqnarray*}
\mathcal{Z}_{P}\times_{\Z_{2}^{l}}\R P^{k-1}=P(\gamma_{1}\oplus\cdots\gamma_{k-1}\oplus\epsilon),
\end{eqnarray*}
where $\gamma_{i}=\mathcal{Z}_{P}\times_{\Z_{2}^{l}}\R_{\alpha_{i}}$ ($i=1,\ \cdots,\ k-1$) satisfies
$w(\gamma_{i})=1+\delta_{1i}x_{1}+\cdots +\delta_{li}x_{l}$
 for $(\delta_{1i},\ \cdots,\ \delta_{li})\in (\Z/2\Z)^{l}$, which is induced by the representation
$\alpha_{i}:\Z_{2}^{l}\to \Z_{2}$.
This is also denoted by the following form:
\[
\mathcal{Z}_{P}\times_{\Z_{2}^{l}}P(\R_{\alpha_{1}}\oplus\cdots\oplus\R_{\alpha_{k}}).
\]

Let $(I_{n} | \Lambda)\in M(m,n;\Z/2\Z)$ be the characteristic matrix of $M$.
Using the above constriction of projective bundles and computing their characteristic functions,
we have the following proposition.
\begin{proposition}
\label{ch-fct}
Let $P(\gamma_{1}\oplus\cdots\oplus\gamma_{k-1}\oplus\epsilon)$ be the projective bundle over $M$.
Then its orbit polytope is $P^{n}\times \Delta^{k-1}$, and
its characteristic matrix is as follows:
\begin{eqnarray}
\label{ch-matrix}
\left(
\begin{array}{cccc}
I_{n} & O       & \Lambda        & \textbf{0} \\
O     & I_{k-1} & \Lambda_{\xi}  & \textbf{1}
\end{array}
\right),
\end{eqnarray}
where $P^{n}=M/\Z_{2}^{n}$ and
\begin{eqnarray*}
\Lambda_{\xi}=
\left(
\begin{array}{ccc}
\delta_{11}    & \cdots & \delta_{l1} \\
\vdots         & \ddots & \vdots \\
\delta_{1,k-1} & \cdots & \delta_{l,k-1}
\end{array}
\right).
\end{eqnarray*}
\end{proposition}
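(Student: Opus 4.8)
The plan is to read off the orbit polytope and the characteristic function of $P(\xi)$ directly from the quotient model \eqref{proj}, $P(\xi)=\mathcal{Z}_{P}\times_{\Z_{2}^{l}}\R P^{k-1}$ with fibre $\R P^{k-1}=P(\R_{\alpha_{1}}\oplus\cdots\oplus\R_{\alpha_{k}})$. By Theorem \ref{basic1} we already know that $P(\xi)$ is a small cover, so it suffices to (i) identify its orbit space with $P^{n}\times\Delta^{k-1}$, and (ii) compute $\lambda(F)\in\Z_{2}^{n}\times\Z_{2}^{k-1}$ for every facet $F$ of $P^{n}\times\Delta^{k-1}$ and check that the resulting columns are those of \eqref{ch-matrix}.

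For (i), the projection $\rho\colon P(\xi)\to M$ is $\Z_{2}^{n}$-equivariant while the residual $\Z_{2}^{k-1}$ acts along the fibres. Because $\xi$ is a Whitney sum of line bundles, the structure group of the fibre bundle $P(\xi)\to M$ reduces to the image in $\mathrm{PGL}(k;\R)$ of the diagonal $\Z_{2}^{k}\subset O(k)$, i.e.\ to $\Z_{2}^{k}/\{\pm I\}\cong\Z_{2}^{k-1}$, and this is exactly the group realizing the standard small-cover structure $\R P^{k-1}\to\Delta^{k-1}$ (coordinate sign changes on $\R^{k}$), which acts trivially on $\Delta^{k-1}$. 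Hence the fibrewise orbit maps $\R P^{k-1}\to\Delta^{k-1}$ glue over $M$, and composing with $\pi\colon M\to P^{n}$ gives a well-defined map $P(\xi)\to P^{n}\times\Delta^{k-1}$; local standardness is inherited and $P^{n}\times\Delta^{k-1}$ is a simple polytope, so this is the orbit map.

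For (ii), order $\mathcal{F}(P^{n})=\{F_{1},\dots,F_{m}\}$ with $\lambda(F_{i})=e_{i}$ for $i\le n$, let $\Lambda$ be the block whose columns are $\lambda(F_{n+1}),\dots,\lambda(F_{m})$ so that the characteristic matrix of $M$ is $(I_{n}\mid\Lambda)$ as in the statement, and write $\mathcal{F}(\Delta^{k-1})=\{G_{1},\dots,G_{k}\}$. Fix the $\Z_{2}^{n}$-equivariant structure on $\xi$ by choosing, for each $j$, an extension $\widetilde{\alpha}_{j}\colon\Z_{2}^{m}\to\Z_{2}$ of $\alpha_{j}$ through the characteristic epimorphism $\Z_{2}^{m}\to\Z_{2}^{n}$ (whose kernel is $\Z_{2}^{l}$) that vanishes on $e_{1},\dots,e_{n}$; a routine check with the classes $x_{1},\dots,x_{l}=\tau_{n+1},\dots,\tau_{m}$ of Section \ref{sect2.3} then gives $\widetilde{\alpha}_{j}(e_{n+r})=\delta_{rj}$, and $\widetilde{\alpha}_{k}\equiv 0$. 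The facets of $P^{n}\times\Delta^{k-1}$ are the $F_{i}\times\Delta^{k-1}$ and the $P^{n}\times G_{j}$, and I compute the isotropy at a relative interior point of each. At an interior point of $P^{n}\times G_{j}$ the $\Z_{2}^{n}$-part of the isotropy is trivial and the fibre point lies over $\mathrm{int}(G_{j})$, so $\lambda(P^{n}\times G_{j})=(0,e_{j})$ for $j\le k-1$ and $(0,\mathbf{1})$ for $j=k$, using that the standard characteristic function of $\Delta^{k-1}$ (that of $\R P^{k-1}$) sends $G_{1},\dots,G_{k}$ to $e_{1},\dots,e_{k-1},\mathbf{1}$. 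At an interior point of $F_{i}\times\Delta^{k-1}$ the fibre point is $\Z_{2}^{k-1}$-free, so $\lambda(F_{i}\times\Delta^{k-1})$ is the class in $\Z_{2}^{n}\times(\Z_{2}^{k}/\{\pm I\})$ of the $\Z_{2}$-isotropy generator of $M$ over $\mathrm{int}(F_{i})$ acting on $\R^{k}=\R_{\alpha_{1}}\oplus\cdots\oplus\R_{\alpha_{k}}$; by the choice of the $\widetilde{\alpha}_{j}$ this generator acts trivially for $i\le n$, giving $(e_{i},0)$, and for $i=n+r$ it acts by $\mathrm{diag}\big((-1)^{\delta_{r1}},\dots,(-1)^{\delta_{r,k-1}},1\big)$, giving $\big(\lambda(F_{n+r}),(\delta_{r1},\dots,\delta_{r,k-1})\big)$ once classes in $\Z_{2}^{k}/\{\pm I\}$ are represented by diagonal matrices with last entry $+1$. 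Listing the facets in the order $F_{1}\times\Delta^{k-1},\dots,F_{n}\times\Delta^{k-1},\;P^{n}\times G_{1},\dots,P^{n}\times G_{k-1},\;F_{n+1}\times\Delta^{k-1},\dots,F_{m}\times\Delta^{k-1},\;P^{n}\times G_{k}$, these columns are exactly those of \eqref{ch-matrix}; property $(\star)$ for it follows from $(\star)$ for $\lambda$ on $P^{n}$ together with $\mathbf{1}=e_{1}+\cdots+e_{k-1}$.

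The step that needs care, rather than anything deep, is the passage from the linear $\Z_{2}^{k}$-action on a fibre $\R^{k}$ to the projective $\Z_{2}^{k-1}$-action on $\R P^{k-1}$; this is what forces the particular normalization of the equivariant structure above and makes the lower entries of the $F_{i}\times\Delta^{k-1}$-columns come out as $O$ for $i\le n$ and as $\Lambda_{\xi}$ for $i>n$, rather than as some other representatives modulo $\{\pm I\}$. It is also exactly where the hypothesis that $\xi$ splits into line bundles (via Theorem \ref{basic1}) is used, since otherwise the fibrewise orbit maps need not glue and $P(\xi)$ need not be a small cover at all.
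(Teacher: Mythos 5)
Your proof is correct and carries out precisely the computation the paper leaves as a one-line remark before the proposition ("Using the above construction of projective bundles and computing their characteristic functions, we have the following proposition"); the paper gives no proof, and you use the same quotient model $\mathcal{Z}_{P}\times_{\Z_{2}^{l}}\R P^{k-1}$ that Section~\ref{sect4.2} sets up. The point worth highlighting is that you correctly identify the two normalizations that make the stated blocks come out on the nose: choosing the lift of the $\Z_{2}^{n}$-action via $\widetilde{\alpha}_{j}$ vanishing on $e_{1},\dots,e_{n}$ (forcing the $O$ block below $I_{n}$) and representing cosets in $\Z_{2}^{k}/\{\pm I\}$ by diagonal matrices with last entry $+1$ (pinning down $\Lambda_{\xi}$ rather than a shift of it by rows of $\mathbf{1}$), without which the matrix in \eqref{ch-matrix} is only determined up to row operations.
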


Therefore, we have the following corollary by using Proposition \ref{cohomology}.
\begin{corollary}
Let $M$ be an small cover, and $\xi$ a Whitney sum of $k$ line bundles over $M$.
Then the following two statements are equivalent:
\begin{enumerate}
\item $H^{*}(P(\xi);\ \Z_{2})\simeq H^{*}(M\times \R P^{k-1};\ \Z_{2})$;
\item $w(\xi)=w(\gamma_{1}\oplus\cdots\oplus\gamma_{k-1}\oplus\epsilon)=\prod_{j=1}^{k-1}(1+\sum_{i=1}^{l}\delta_{ij}x_{i})=(1+X)^{k}$.
\end{enumerate}
\end{corollary}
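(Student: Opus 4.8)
The plan is to simply combine Proposition~\ref{cohomology} with the explicit computation of the total Stiefel--Whitney class carried out in this subsection. By Theorem~\ref{basic1}, since $P(\xi)$ is assumed to be a projective bundle over a small cover, the equivariant bundle $\xi$ splits as a Whitney sum of equivariant line bundles, and by Corollary~\ref{cor-basic} we may take $\xi \equiv \gamma_{1}\oplus\cdots\oplus\gamma_{k-1}\oplus\epsilon$ with $\epsilon$ trivial. From the discussion preceding Proposition~\ref{ch-fct}, each $\gamma_{j}$ has $w(\gamma_{j}) = 1 + \sum_{i=1}^{l}\delta_{ij}x_{i}$, where the $x_{i}$ are the degree $1$ generators of $H^{*}(M;\Z/2\Z)$ from Section~\ref{sect2.3} and $(\delta_{1j},\ldots,\delta_{lj})$ is the vector determined by the representation $\alpha_{j}$. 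By the Whitney product formula (and $w(\epsilon)=1$), the total Stiefel--Whitney class is the product $w(\xi) = \prod_{j=1}^{k-1}\bigl(1 + \sum_{i=1}^{l}\delta_{ij}x_{i}\bigr)$, which is exactly the middle expression in statement~(2).

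First I would record these identifications so that statement~(2) of the present corollary reads precisely: $w(\xi) = \prod_{j=1}^{k-1}(1+\sum_{i=1}^{l}\delta_{ij}x_{i}) = (1+X)^{k}$. The point is then that the clause ``$= (1+X)^{k}$ for some $X \in H^{1}(M)$'' is literally the condition appearing in statement~(2) of Proposition~\ref{cohomology} (with $k \equiv_{2} 0$ or $1$, the binomial coefficients modulo $2$ being the only ones that survive). Meanwhile, statement~(1) of the present corollary, $H^{*}(P(\xi);\Z_{2}) \simeq H^{*}(M\times\R P^{k-1};\Z_{2})$, is verbatim statement~(1) of Proposition~\ref{cohomology}. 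So the equivalence of (1) and (2) here is nothing more than Proposition~\ref{cohomology} applied to the present $M$ (a small cover, in particular a closed manifold) and the present $\xi$, together with the computation $w(\xi) = \prod_{j=1}^{k-1}(1+\sum_{i=1}^{l}\delta_{ij}x_{i})$.

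There is essentially no obstacle: the only thing to check is that the hypothesis $k>1$ of Proposition~\ref{cohomology} is harmless in this context (for $k=1$ the projective bundle is a point-bundle over $M$ and the statement is trivial anyway), and that the factor $\epsilon$ contributes trivially to $w(\xi)$, so that the product runs only over $j=1,\ldots,k-1$ rather than $j=1,\ldots,k$. The mildly delicate point worth spelling out is the interplay between the ``$X$'' in Proposition~\ref{cohomology} (a single class in $H^{1}(M)$ with $w(\xi) = (1+X)^{k}$) and the split form $\prod_{j}(1+\sum_{i}\delta_{ij}x_{i})$: one direction is the observation that if all the line-bundle classes $\sum_{i}\delta_{ij}x_{i}$ coincide with a common class $X$ then the product collapses to $(1+X)^{k}$ (using $k-1 = k$ modulo the trivial factor), and the other direction is immediate from the equivalence in Proposition~\ref{cohomology}. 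I would phrase the proof in two sentences: compute $w(\xi)$ by the Whitney formula, then invoke Proposition~\ref{cohomology}.
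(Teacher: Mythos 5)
Your proposal is correct and is essentially the paper's own (unwritten) proof: the paper introduces the corollary with the phrase ``by using Proposition~\ref{cohomology},'' and you have correctly identified that the content is the Whitney product computation $w(\xi)=\prod_{j=1}^{k-1}(1+\sum_i\delta_{ij}x_i)$ (the $\epsilon$ factor dropping out) combined directly with the equivalence of Proposition~\ref{cohomology}. One small caution: your parenthetical ``the product collapses to $(1+X)^k$ (using $k-1=k$ modulo the trivial factor)'' is misleading and unnecessary --- statement~(2) does not assert that each $w_1(\gamma_j)$ equals a common $X$ (which would yield $(1+X)^{k-1}$, not $(1+X)^k$); it simply asserts the equality of the product with $(1+X)^k$ for some $X$, and no implication between the two forms needs to be argued since both are part of a single condition. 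The core two-sentence proof you give at the end is exactly right.
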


\subsection{New characteristic function of projective bundles over small covers}
\label{sect4.3}
In order to show the construction theorem of projective bundles over $2$-dimensional small covers,
we introduce a new characteristic function (matrix).
Let $(I_{n}\ |\ \Lambda)$ be the characteristic matrix of $M^{n}$, where $\Lambda\in M(n, l;\Z/2\Z)$ for $l=m-n$ ($m$ is the number of facets of $P^{n}=M/\Z_{2}^{n}$).
By using Proposition \ref{ch-fct},
the characteristic matrix of $P(\gamma_{1}\oplus\cdots\oplus\gamma_{k-1}\oplus\epsilon)$ is
\begin{eqnarray}
\label{ch-matrix2}
\left(
\begin{array}{cccc}
I_{n} & O       & \Lambda        & \textbf{0} \\
O     & I_{k-1} & \Lambda_{\xi}  & \textbf{1}
\end{array}
\right),
\end{eqnarray}
where
\begin{eqnarray*}
\left(
\begin{array}{c}
\Lambda \\
\hline
\Lambda_{\xi}
\end{array}
\right)=
\left(
\begin{array}{ccc}
\lambda_{11}   & \cdots & \lambda_{l1} \\
\vdots           & \ddots & \vdots \\
\lambda_{1n}   & \cdots & \lambda_{ln} \\
\hline
\delta_{11}    & \cdots & \delta_{l1} \\
\vdots         & \ddots & \vdots \\
\delta_{1,k-1} & \cdots & \delta_{l,k-1}
\end{array}
\right)=
\left(
\begin{array}{ccc}
\textbf{a}_{1}   & \cdots & \textbf{a}_{l} \\
\hline
\textbf{b}_{1}    & \cdots & \textbf{b}_{l}
\end{array}
\right),
\end{eqnarray*}
where $\textbf{a}_{i}\in \{0,\ 1\}^{n}$ and $\textbf{b}_{i}\in \{0,\ 1\}^{k-1}$ for $i=1,\ \cdots,\ l$.
Therefore, in order to characterize the projective bundles over $M^{n}$, it is sufficient to attach the following $((n+k-1)\times m)$-matrix
\begin{eqnarray}
\label{proj-ch-matrix}
\left(
\begin{array}{cc}
I_{n} & \Lambda \\
O & \Lambda_{\xi}
\end{array}
\right)
\end{eqnarray}
on the facets of $P$.
Namely, it is sufficient to consider the following
characteristic function:
for $P$ and its facets $\mathcal{F}$, the function
\begin{eqnarray*}
\lambda_{P}:\mathcal{F}=\{F_{1},\ \cdots,\ F_{m}\}\to \{0,\ 1\}^{n}\times \{0,\ 1\}^{k-1}
\end{eqnarray*}
satisfies $\lambda_{P}(F_{1})=e_{1}\times \textbf{0},\ \ldots,\ \lambda_{P}(F_{n})=e_{n}\times \textbf{0}$ (where $e_{i}$ is the standard basis in $(\Z/2\Z)^{n}$)
and the projection to the 1st factor $p_{a}:\{0,\ 1\}^{n}\times \{0,\ 1\}^{k-1}\to \{0,\ 1\}^{n}$
satisfy that
\begin{eqnarray}
\label{proj-smooth}
\det (p_{a}\circ \lambda_{P}(F_{i_{1}})\cdots p_{a}\circ \lambda_{P}(F_{i_{n}}))=1
\end{eqnarray}
if $F_{i_{1}}\cap\cdots\cap F_{i_{n}}\not=\emptyset$, i.e., $p_{a}\circ \lambda_{P}$ is the usual characteristic function on $P^{n}$.
We call this function $\lambda_{P}$ a {\it projective characteristic function} over $P^{n}$; or a
{\it $(k-1)$-dimensional projective characteristic function} when we emphasize the dimension of the fibre.
One can easily show that the $((n+k-1)\times m)$-matrix $(\lambda_{P}(F_{1})\cdots \lambda_{P}(F_{m}))$ is identified with the matrix \eqref{proj-ch-matrix} up to isomorphism.
We call this matrix a {\it $($(k-1)-dimensional$)$ projective characteristic matrix} over $P$.
Figure \ref{new-ch} is an illustration of projective characteristic functions over $2$-dimensional small covers.
\begin{figure}[h]
\includegraphics[width=200pt,clip]{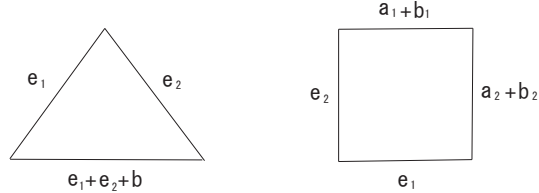}
\caption{The examples of $(k-1)$-dimensional projective characteristic functions.
Here, $e_{1}=(1,0)\times \textbf{0}$ and $e_{2}=(0,1)\times \textbf{0}$ are the generators of $(\Z/2\Z)^{2}\times \textbf{0}$.
For the left triangle, we may take an arbitrary element $b\in (0,0)\times (\Z/2\Z)^{k-1}$ and one can easily show that each of those corresponds with a projective bundle over $\R P^{2}$ whose fibre is $\R P^{k-1}$.
For the right square, $a_{1},\ a_{2}$ are elements in $(\Z/2\Z)^{2}\times \textbf{0}$ which satisfy (\ref{proj-smooth}) on each vertex, and $b_{1},\ b_{2}\in (0,0)\times (\Z/2\Z)^{k-1}$ determine the bundle structure.}
\label{new-ch}
\end{figure}

Note that in Figure \ref{new-ch},
if we put $b=0$ and $b_{1}=b_{2}=0\in \Z_{2}^{k-1}$, then this gives ordinary characteristic functions on the triangle and the square.
Therefore, we can regard such a forgetful map of the $(\Z/2\Z)^{k-1}$ part, $f:(P^{n},\lambda_{P})\to (P^{n},p_{a}\circ \lambda_{P})$, as
the equivariant projection $P(\xi)\to M(P,p_{a}\circ \lambda_{P})$.

Using Proposition \ref{ch-fct} and the construction method of small covers (see Section \ref{sect2.2}), one can easily show that
the pair $(P^{n},\ \lambda_{P})$ corresponds with the projective bundle over the $n$-dimensional small cover whose orbit polytope is $P^{n}$.
More precisely, for the projective bundle $P(\xi)$ over the small cover $M(P,\lambda)$ there exists
the projective characteristic function $(P,\lambda_{P})$ such that $p_{a}\circ \lambda_{P}=\lambda$.
On the other hand, for the projective characteristic function $(P,\lambda_{P})$ there exists the
projective bundle $\mathcal{Z}_{P}\times_{\Z_{2}^{l}}\R P^{k-1}=P(\gamma_{1}\oplus\cdots\oplus\gamma_{k-1}\oplus\epsilon)$ over $\mathcal{Z}_{p}/\Z_{2}^{l}=M(P,p_{a}\circ\lambda)$
up to $(\Z_{2}^{n}\times \Z_{2}^{k-1})$-equivariant homeomorphism, where the line bundle
$\gamma_{i}$ is determined by the $i$th column vector of $\Lambda_{\xi}=(p_{b}\circ \lambda_{P}(F_{n+1})\cdots p_{b}\circ \lambda_{P}(F_{m}))$ (also see Section \ref{sect4.1}),
where $p_{b}:\{0,1\}^{n}\times \{0,1\}^{k-1}\to \{0,1\}^{k-1}$ is the projection to the $2$nd factor.

Summing up, we have the following relationship:
\begin{center}
\fbox{
\begin{tabular}{c}
Projective bundles \\
over small cover $M^{n}$ \\
whose orbit polytope is $P^{n}$
\end{tabular}
}
$
\begin{array}{c}
\longrightarrow  \\
\longleftarrow
\end{array}
$
\fbox{
\begin{tabular}{c}
Simple, convex polytope $P^{n}$ \\
with projective characteristic functions $\lambda_{P}$
\end{tabular}
}
\end{center}

\section{New operations and main theorem}
\label{sect5}

In this section, we state our main theorem.
Before doing so, we introduce a new operation.

\subsection{Combinatorial interpretation of the fibre sum}
\label{sect5.1}

For two polytopes with projective characteristic functions,
we can do the connected sum operation which is compatible with projective characteristic functions as indicated in Figure \ref{operation-projective-bdl}.
Then we get a new polytope with the projective characteristic function.
We call this operation a {\it projective fibre sum} and denote it by $\sharp_{\Delta^{k-1}}$.

More precisely, the operation is defined as follows.
Let $p$ and $q$ be vertices in $n$-dimensional polytopes with $(k-1)$-dimensional projective characteristic functions $(P,\lambda_{P})$ and $(P',\lambda_{P'})$, respectively.
Here, we assume that the target spaces of the maps $\lambda_{P}$ and $\lambda_{P'}$ are the same $(\Z/2\Z)^{n}\times (\Z/2\Z)^{k-1}$,
i.e., the corresponding projective bundles have the same fibre $\R P^{k-1}$.
Moreover, we assume that
$\lambda_{P}(F_{i})=\lambda_{P'}(F_{i}')$ for all facets $\{F_{1},\ldots,F_{n}\}$ around $p$ and $\{F_{1}',\ldots,F_{n}'\}$ around $q$, i.e., $\cap_{i=1}^{n}F_{i}=\{p\}$ and $\cap_{i=1}^{n}F_{i}'=\{q\}$.
Then we can do the connected sum of two polytopes $P$ and $P'$ at these vertices by gluing each pair of facets $F_{i}$ and $F_{i}'$.
Thus, we get a combinatorial object (might not be a convex polytope) with projective characteristic functions $(P\sharp_{\Delta^{k-1}} P',\ \lambda_{P\sharp_{\Delta^{k-1}} P'})$ from $(P,\lambda_{P})$ and $(P',\lambda_{P'})$ (also see Figure \ref{operation-projective-bdl}).
Note that $P_{1}\sharp_{\Delta^{k-1}} P_{2}$ is a combinatorial simple convex polytope if $n\le 3$ by using {\it the Steinitz' theorem}:
the graph $\Gamma$ is a graph of the $3$-dimensional
polytope $P$ if and only if $\Gamma$ is $3$-connected and planer
 (see \cite[Chapter 4]{Ziegler}).
By following the converse of the above definition, we may define the inverse operation $\sharp_{\Delta^{k-1}}^{-1}$.

\begin{figure}[h]
\includegraphics[width=150pt,clip]{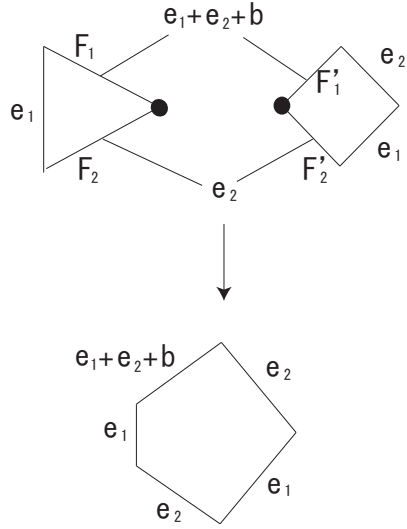}
\caption{The projective fibre sum $\sharp_{\Delta^{k-1}}$ along the same labeled vertices.}
\label{operation-projective-bdl}
\end{figure}

From the geometric point of view,
the inverse image of vertices of polytopes with projective characteristic functions corresponds to the projective space $\R P^{k-1}$.
Therefore,
a geometric interpretation of this operation is an equivariant gluing along the fibre $\R P^{k-1}$, i.e., fibre sum of two fibre bundles.

\begin{remark}
\label{relation with the connected sum}
If $k=1$, then the projective characteristic function is the ordinary characteristic function, i.e., the fibre dimension is $0$.
Therefore, we can regard the $0$-dimensional projective fibre sum $\sharp_{\Delta^{0}}$ as the ordinary (equivariant) connected sum $\sharp$ appeared in \cite{Izmestiev, Kuroki, LY, Nishimura}.
\end{remark}

If $P\sharp_{\Delta^{k-1}}P'$ is a convex simple polytope,
then  $(P\sharp_{\Delta^{k-1}} P',\ \lambda_{P\sharp_{\Delta^{k-1}} P'})$ defines
the $(k-1)$-dimensional projective bundle over $M\sharp M'$ (connected sum), where $M=M(P,p_{a}\circ\lambda_{P})$ and $M'=M(P',p_{a}\circ\lambda_{P'})$.
We note that if $\lambda_{P}$ and $\lambda_{P'}$ are
\begin{eqnarray*}
\left(
\begin{array}{ccccc}
I_{n} & \Lambda & \textbf{a}_{1} & \cdots & \textbf{a}_{n}  \\
O &  \Lambda_{\xi} &\textbf{b}_{1} & \cdots & \textbf{b}_{n}
\end{array}
\right), \quad
\left(
\begin{array}{ccccc}
I_{n} & \Lambda' & \textbf{a}_{1} & \cdots & \textbf{a}_{n}  \\
O & \Lambda_{\xi'} &\textbf{b}_{1} & \cdots & \textbf{b}_{n}
\end{array}
\right),
\end{eqnarray*}
respectively, then $\lambda_{P\sharp_{\Delta^{k-1}}P'}$ is
\begin{eqnarray*}
\left(
\begin{array}{cccccc}
I_{n} &  \Lambda & \textbf{a}_{1} & \cdots & \textbf{a}_{n} & \Lambda'  \\
O &  \Lambda_{\xi} &\textbf{b}_{1} & \cdots & \textbf{b}_{n} & \Lambda_{\xi'}
\end{array}
\right),
\end{eqnarray*}
where the same $n$ column vectors above correspond to the projective characteristic functions on $\{F_{1},\ldots,F_{n}\}$ and $\{F'_{1},\ldots,F'_{n}\}$.

\subsection{Construction theorem of projective bundles over $2$-dimensional small covers}
\label{sect5.2}

In this subsection, we prove one of the main results of this paper.
Put $P(\kappa_{i})$ and $P(\zeta_{j})$ projective bundles over $\R P^{2}$ and $T^{2}$, respectively
(also see Proposition \ref{2-proj} and \ref{2-torus}).
Here, $\kappa_{i}$ and $\zeta_{j}$ are products of $k$ line bundles, i.e., $P(\kappa_{i})$ and $P(\zeta_{j})$ have the same fibre $\R P^{k-1}$
(for $i,\ j=1,2,\ldots$).
Then, we have the following theorem.

\begin{theorem}
\label{construction}
Let $P(\xi)$ be a projective bundle over $2$-dimensional small cover $M^{2}$.
Then, $P(\xi)$ is weak equivariantly homeomorphic to
\begin{eqnarray*}
P(\kappa_{1})\sharp_{\Delta^{k-1}}\cdots \sharp_{\Delta^{k-1}}P(\kappa_{l_{1}})\sharp_{\Delta^{k-1}} P(\zeta_{1})\sharp_{\Delta^{k-1}}\cdots \sharp_{\Delta^{k-1}}P(\zeta_{l_{2}}),
\end{eqnarray*}
for some vector bundles $\kappa_{1},\ \ldots,\ \kappa_{l_{1}}$ and $\zeta_{1},\ \ldots,\ \zeta_{l_{2}}$,
where two $G$-manifolds $X$ and $Y$ are weak equivariantly homeomorphic
if they are equivariantly homeomorphic up to automorphism of $G$.
\end{theorem}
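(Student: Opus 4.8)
The plan is to work entirely on the combinatorial side, using the dictionary between projective bundles over $2$-dimensional small covers and pairs $(P^{2},\lambda_{P})$ consisting of a simple convex $2$-polytope together with a $(k-1)$-dimensional projective characteristic function, established in Section \ref{sect4.3}. Fix such a pair $(P^{2},\lambda_{P})$ representing $P(\xi)$; here $P^{2}$ is a convex polygon, say with $m$ edges, and $p_{a}\circ\lambda_{P}=\lambda$ is the ordinary characteristic function of $M^{2}$. The key point is that the classical structure theory of $2$-dimensional small covers (Davis--Januszkiewicz, and the form used in Remark \ref{relation with the connected sum}) already tells us that, forgetting the $(\Z/2\Z)^{k-1}$-part, $(P^{2},\lambda)$ is obtained from triangles (colored so as to give $\R P^{2}$) and squares (colored so as to give $T^{2}$) by iterated equivariant connected sums $\sharp=\sharp_{\Delta^{0}}$. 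I would first recall or reprove this: induct on $m$, and when $m\ge 4$ find an edge $F$ such that cutting $P^{2}$ along a chord separating $F$ and its two neighbours from the rest exhibits $(P^{2},\lambda)$ as $(P_{1},\lambda_{1})\sharp(P_{2},\lambda_{2})$ with $P_{1}$ a triangle or square; the property $(\star)$ forces $\lambda_{1}$ to be one of the standard colorings of the triangle or square.

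The second step is to lift this decomposition to the level of projective characteristic functions. Given the connected-sum decomposition of $(P^{2},\lambda)$ at a chosen vertex $v$, I would define projective characteristic functions $\lambda_{P_{1}}$ and $\lambda_{P_{2}}$ on the two pieces simply by restricting $\lambda_{P}$: on each piece we keep both the $(\Z/2\Z)^{2}$-component and the $(\Z/2\Z)^{k-1}$-component on the facets that survive. Because the two vertices being glued have, after a change of basis, the standard labels $e_{1}\times\mathbf{0}$, $e_{2}\times\mathbf{0}$ on the three edges meeting there (this is exactly the normalization used to define $\sharp_{\Delta^{k-1}}$ in Section \ref{sect5.1}), the matrix description of $\lambda_{P\sharp_{\Delta^{k-1}}P'}$ given at the end of Section \ref{sect5.1} shows that $(P^{2},\lambda_{P})=(P_{1},\lambda_{P_{1}})\sharp_{\Delta^{k-1}}(P_{2},\lambda_{P_{2}})$, where the identification uses the automorphism of $G=\Z_{2}^{2}\times\Z_{2}^{k-1}$ realizing the basis change — this is precisely why the statement asserts only \emph{weak} equivariant homeomorphism. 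Iterating, we reduce $(P^{2},\lambda_{P})$ to a $\sharp_{\Delta^{k-1}}$-sum of pairs $(\Delta,\lambda_{\Delta})$ and $(\square,\lambda_{\square})$ over triangles and squares.

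The third step is to recognize that each such atomic pair is exactly a projective bundle over $\R P^{2}$ or over $T^{2}$. For a triangle, $p_{a}\circ\lambda_{\Delta}$ is the standard coloring of $\Delta^{2}$, whose small cover is $\R P^{2}$; by the dictionary of Section \ref{sect4.3} the full data $\lambda_{\Delta}$ then determines line bundles $\gamma_{1},\dots,\gamma_{k-1}$ over $\R P^{2}$ via the columns of $\Lambda_{\xi}$, so $(\Delta,\lambda_{\Delta})$ corresponds to some $P(\kappa_{i})=P(\gamma_{1}\oplus\cdots\oplus\gamma_{k-1}\oplus\epsilon)$ over $\R P^{2}$. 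Likewise a square gives $P(\zeta_{j})$ over $T^{2}$. Assembling, $P(\xi)$ is weak equivariantly homeomorphic to $P(\kappa_{1})\sharp_{\Delta^{k-1}}\cdots\sharp_{\Delta^{k-1}}P(\kappa_{l_{1}})\sharp_{\Delta^{k-1}}P(\zeta_{1})\sharp_{\Delta^{k-1}}\cdots\sharp_{\Delta^{k-1}}P(\zeta_{l_{2}})$, and (since the connected-sum operation on polygons can always be arranged so that all triangle summands come first) we may order the summands as stated.

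I expect the main obstacle to be bookkeeping in the second step: one must check that after each cut the chosen gluing vertex genuinely carries the normalized labels required by the definition of $\sharp_{\Delta^{k-1}}$, and that the basis change needed to normalize the $(\Z/2\Z)^{2}$-part does not interfere with the $(\Z/2\Z)^{k-1}$-part — it does not, because any $G$-automorphism preserving the block-triangular shape $\left(\begin{smallmatrix}I&*\\ O&*\end{smallmatrix}\right)$ of the projective characteristic matrix acts as an affine substitution that keeps the last $k-1$ coordinates intact, but verifying this cleanly is where the real care lies. A secondary subtlety is that $P\sharp_{\Delta^{k-1}}P'$ need not be a convex polytope in general; here it is harmless because $n=2$ (so Steinitz' theorem applies trivially to polygons), but it is worth stating that we only ever form sums of a polygon with a triangle or square, which are again polygons.
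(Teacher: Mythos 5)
Your approach is essentially the same as the paper's: translate to the combinatorics of projective characteristic functions on a polygon, peel the polygon apart by applying $\sharp_{\Delta^{k-1}}^{-1}$ iteratively, reduce to triangles and $T^{2}$-type squares, and finally recognize those atoms as projective bundles over $\R P^{2}$ and $T^{2}$ by a change of basis realizing the weak equivariance. The differences are organizational rather than conceptual: the paper works directly with $\lambda_{P}$ and, at each step, locates \emph{any} two non-adjacent edges $F,F'$ whose $(\Z/2\Z)^{2}$-labels are linearly independent (which exists whenever $m\ge5$), producing two smaller polygons without trying to control their sizes, whereas you insist on cutting off a triangle or a square. The latter is not wrong, but you have not justified why such a cut exists: you write ``when $m\ge4$ find an edge $F$'' without an argument, and for $m=4$ a triangle cut genuinely may fail (the alternating $T^{2}$-coloring has $a_{i-1}=a_{i+1}$ for every $i$). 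The paper's short claim -- that for $m\ge5$ some non-adjacent pair of edges has independent labels -- is the one piece of genuine content in the splitting step, and your proposal leaves it implicit.

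Two further minor imprecisions are worth noting. First, a vertex of a $2$-polytope lies on exactly two edges, so ``the three edges meeting there'' is a slip; more substantively, the definition of $\sharp_{\Delta^{k-1}}$ only requires the labels at the two gluing vertices to agree, not to be the standard basis, so no normalization is needed to perform the cut -- the normalization is needed later, to identify the atomic pieces with the model bundles over $\R P^{2}$ and $T^{2}$. Second, your parenthetical that the normalizing $G$-automorphism ``keeps the last $k-1$ coordinates intact'' is the opposite of what happens in the paper: the matrix $X$ used there is lower block-triangular and \emph{adds} multiples of the first two rows to the last $k-1$ rows, precisely in order to clear out the $\Lambda_{\xi}$-entries below the identity block. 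This does not break your argument -- the point is only that the needed change of basis is an automorphism of $G=\Z_{2}^{2}\times\Z_{2}^{k-1}$, which is exactly what weak equivariance allows -- but the way you described it would not compile into a correct verification.
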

\begin{proof}
Let $P$ be the orbit polytope of $M$.
Because $\dim M=2$, we may assume that $P$ is an $m$-gon for some $m\ge 3$,
where $m$ is the number of facets in $P$ and we may put them $\{F_{1},\ \ldots,\ F_{m}\}$.
Moreover, we assume $F_{i}\cap F_{i+1}\not=\emptyset$ and $F_{1}\cap F_{m}\not=\emptyset$.

We first claim that, in $m$-gon for $m\ge 5$, there are two separated facets $F$ and $F'$ whose projective characteristic functions satisfy (\ref{proj-smooth}).
Assume $m\ge 5$.
Put the projective characteristic function of $F_{i}$ and $F_{j}$ (where $F_{i}\cap F_{j}=\emptyset$) as follows:
\begin{eqnarray*}
\left(
\begin{array}{c}
a_{i} \\
b_{i}
\end{array}
\right)\quad {\rm and}\quad
\left(
\begin{array}{c}
a_{j} \\
b_{j}
\end{array}
\right),
\end{eqnarray*}
respectively, where $a_{i},\ a_{j}\in \{0,\ 1\}^{2}=(\Z/2\Z)^{2}$ and $b_{i},\ b_{j}\in \{0,\ 1\}^{k-1}$.
Note that $\det (a_{i},\ a_{j})=1$ if and only if $a_{i}\not=a_{j}$ because they are elements in $(\Z/2\Z)^{2}$.
If $\det (a_{i},\ a_{j})=1$, we can take $F_{i}$ and $F_{j}$ as $F$ and $F'$ we want.
Assume $\det (a_{i},\ a_{j})=0$, i.e, $a_{i}=a_{j}$.
Since $m\ge 5$, we may assume that
the facet $F_{j+1}$ which is next to $F_{j}$, i.e, $F_{j+1}\cap F_{j}\not=\emptyset$, satisfies that $F_{j+1}\cap F_{i}=\emptyset$.
Therefore, by $a_{i}=a_{j}$, we have $\det (a_{j+1},a_{j})=\det (a_{j+1},a_{i})=1$.
Thus, we can take $F_{i}$ and $F_{j+1}$ as $F$ and $F'$ we want.
This establishes the claim.

For such facets $F$ and $F'$, we can do $\sharp_{\Delta^{k-1}}^{-1}$, because there are two $m_{1}$-gon $P_{1}$ and $m_{2}$-gon $P_{2}$ (where $m=m_{1}+m_{2}-2$)
with vertices generated by two facets which have the same projective characteristic functions of $F$ and $F'$ (see Figure\ref{explain1}).
\begin{figure}[h]
\includegraphics[width=100pt,clip]{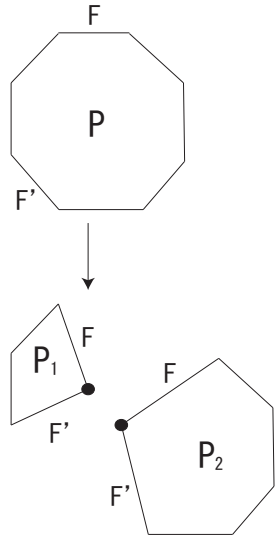}
\caption{We can do $\sharp_{\Delta^{k-1}}^{-1}$ always for $m$-gon $P$ ($m\ge 5$).
This figure illustrates the $8$-gon $P$ decomposes into the $4$-gon $P_{1}$ and the $6$-gon $P_{2}$.
Here, each $F$ (resp.\ $F'$) has the same projective characteristic function, and every corresponding facets also have the same projective characteristic functions.}
\label{explain1}
\end{figure}
This implies that $(P,\lambda_{P})$ can be constructed from $(P_{1},\lambda_{P_{1}})$ and $(P_{2},\lambda_{P_{2}})$ by using $\sharp_{\Delta^{k-1}}$,
where $P$ is an $m$-gon ($m\ge 5$), $P_{1}$ is an $m_{1}$-gon and $P_{2}$ is an $m_{2}$-gon.
Note that $m_{1}$ and $m_{2}$ are strictly less than $m$.
Iterating this argument, finally we have the finite number of $3$-gons and $4$-gons (see Figure \ref{explain2}).
\begin{figure}[h]
\includegraphics[width=120pt,clip]{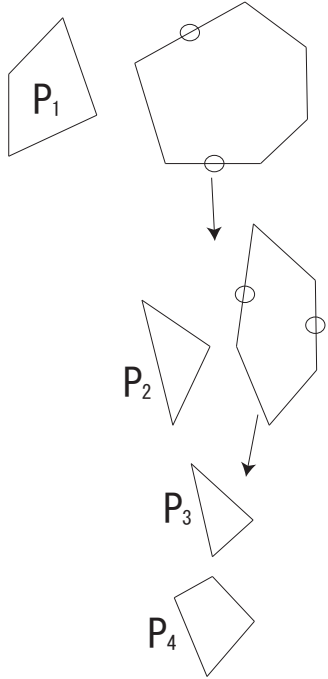}
\caption{Iterating $\sharp_{\Delta^{k-1}}^{-1}$, finally, we have finite $3$-gons and $4$-gons; $P_{1},\ \ldots,\ P_{4}$ in this case.}
\label{explain2}
\end{figure}

It is easy to see that we can not do $\sharp_{\Delta^{k-1}}$ for $3$-gons any more.
However, there are two $4$-gons; one can not do $\sharp_{\Delta^{k-1}}$ (such as the left in Figure \ref{explain3}),
and another can do $\sharp_{\Delta^{k-1}}$ (such as the right in Figure \ref{explain3}).
\begin{figure}[h]
\includegraphics[width=200pt,clip]{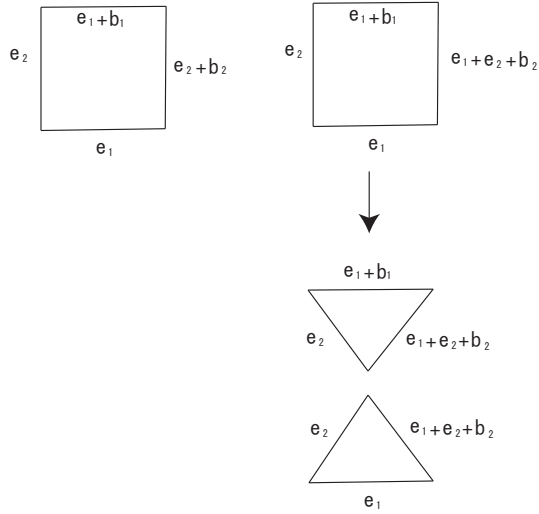}
\caption{We can not do $\sharp_{\Delta^{k-1}}^{-1}$ for the left $4$-gon; however, we can do $\sharp_{\Delta^{k-1}}^{-1}$ for the right $4$-gon.}
\label{explain3}
\end{figure}
If we can do $\sharp_{\Delta^{k-1}}$ on a $4$-gon, then we get two $3$-gons (see the right in Figure \ref{explain3}).
Consequently, we get $3$-gons and $4$-gons which we can not do $\sharp_{\Delta^{k-1}}$ any more from an $m$-gon ($m\ge 3$).

It is easy to see that such $3$-gons and $4$-gons have the characteristic functions illustrated in Figure \ref{explain4}.
\begin{figure}[h]
\includegraphics[width=250pt,clip]{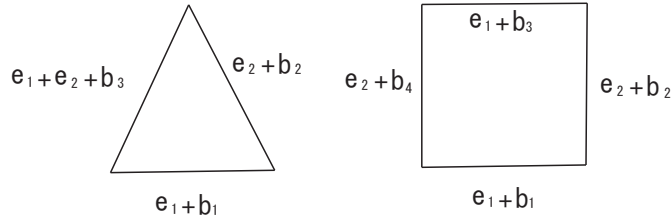}
\caption{Characteristic functions on $3$-gons $\Delta^{2}$ and $4$-gons $I^{2}$ in the final step.}
\label{explain4}
\end{figure}
Therefore, we finally need to analyze these are the same with the projective characteristic functions.
Let us recall the (ordinary) characteristic functions on two polytopes $\Delta^{2}\times \Delta^{k-1}$ and $I^{2}\times \Delta^{k-1}$
which corresponding to the above two polytopes with projective characteristic functions (see Section \ref{sect4}).
They are
\begin{eqnarray*}
A=\left(
\begin{array}{ccccc}
1 & 0 & 0  \cdots  0 & 1 & 0  \\
0 & 1 & 0  \cdots  0 & 1 & 0  \\
b_{1} & b_{2} & I_{k-1}  & b_{3} & \textbf{1}
\end{array}
\right), \quad
B=\left(
\begin{array}{cccccc}
1 & 0 & 0  \cdots  0 & 1 & 0 & 0  \\
0 & 1 & 0  \cdots  0 & 0 & 1 & 0  \\
b_{1} & b_{2} & I_{k-1}  & b_{3} & b_{4} & \textbf{1}
\end{array}
\right),
\end{eqnarray*}
respectively.
As is well known, two $n$-dimensional small covers are weakly equivariantly homeomorphic
if and only if the corresponding characteristic matrices are the same up to the left multiplication of some $X\in GL(n;\Z/2\Z)$.
Let $X$ be the following $(k+1)\times (k+1)$-matrix:
\begin{eqnarray*}
X=\left(
\begin{array}{ccc}
1 & 0 & 0  \cdots  0  \\
0 & 1 & 0  \cdots  0 \\
b_{1} & b_{2} & I_{k-1}
\end{array}
\right).
\end{eqnarray*}
Multiplying $X$ to the $A$ and $B$ above, then we have that
\begin{eqnarray*}
XA=\left(
\begin{array}{ccccc}
1 & 0 & 0  \cdots  0 & 1 & 0  \\
0 & 1 & 0  \cdots  0 & 1 & 0  \\
\textbf{0} & \textbf{0} & I_{k-1}  & b_{1}+b_{2}+b_{3} & \textbf{1}
\end{array}
\right), \quad
XB=\left(
\begin{array}{cccccc}
1 & 0 & 0  \cdots  0 & 1 & 0 & 0  \\
0 & 1 & 0  \cdots  0 & 0 & 1 & 0  \\
\textbf{0} & \textbf{0} & I_{k-1}  & b_{1}+b_{3} & b_{2}+b_{4} & \textbf{1}
\end{array}
\right).
\end{eqnarray*}
This implies that the small covers with characteristic functions $A$ and $B$ are weak equivariantly homeomorphic to the
projective bundles over small covers defined by $(\Delta^{2},\lambda_{\Delta^{2}})$ and $(I^{2},\lambda_{I^{2}})$ such that
the projective characteristic functions $\lambda_{\Delta^{2}}$ and $\lambda_{I^{2}}$ are induced from the above $XA$ and $XB$.
Note that $(\Delta^{2}, \lambda_{\Delta^{2}})$ corresponds to the projective bundle over $\R P(2)$.
On the other hand,
$(I^{2},\lambda_{I^{2}})$ corresponds to the projective bundle over $T^{2}$.
Therefore, by using the finite times $\sharp_{\Delta^{k-1}}^{-1}$,
the projective bundle over $M$ can be decomposed into projective bundles over $\R P(2)$ and $T^{2}$.
It follows from the converse of this argument that we establish the statement of this theorem.
\end{proof}

By using Remark \ref{relation with the connected sum} and Theorem \ref{construction},
we have the following well-known fact.
\begin{corollary}
\label{2-dim}
Let $M^{2}$ be a $2$-dimensional small cover.
Then $M^{2}$ is equivariantly homeomorphic to an equivariant connected sum of finite $\R P(2)$'s and $T^{2}$'s with standard $\Z_{2}^{2}$-actions.
\end{corollary}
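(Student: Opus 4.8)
The plan is to obtain Corollary \ref{2-dim} as the $k=1$ specialization of Theorem \ref{construction}. First I would note that when $k=1$ the fibre $\R P^{k-1}=\R P^{0}$ is a single point, so a $k$-dimensional (that is, $1$-dimensional) bundle $\xi$ is a line bundle and its projectivization $P(\xi)$ — the orbit space of the fibrewise $\R^{\ast}$-action on $\xi$ with the zero-section removed — is the base $M^{2}$ itself. Thus every $2$-dimensional small cover is, trivially, a projective bundle over itself with fibre $\R P^{0}$, and Theorem \ref{construction} applies to $P(\xi)=M^{2}$.

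Next I would identify the building blocks. By Remark \ref{relation with the connected sum}, the $0$-dimensional projective fibre sum $\sharp_{\Delta^{0}}$ is exactly the ordinary equivariant connected sum $\sharp$. Moreover, setting $k=1$ in the characteristic matrices $A$ and $B$ from the proof of Theorem \ref{construction} — the ones supported on the final $3$-gons $\Delta^{2}$ and $4$-gons $I^{2}$ — makes the auxiliary $I_{k-1}$- and $b_{i}$-blocks disappear, leaving precisely the standard characteristic functions on $\Delta^{2}$ and on $I^{2}$; these define $\R P^{2}$ and $T^{2}=\R P^{1}\times\R P^{1}$ with their standard $\Z_{2}^{2}$-actions. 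Hence each $P(\kappa_{i})$ is a standard $\R P^{2}$ and each $P(\zeta_{j})$ is a standard $T^{2}$, and the conclusion of Theorem \ref{construction} reads: $M^{2}$ is weak equivariantly homeomorphic to a connected sum $\R P^{2}\sharp\cdots\sharp\R P^{2}\sharp T^{2}\sharp\cdots\sharp T^{2}$ of finitely many standard pieces.

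Finally I would remove the word ``weak''. The automorphism of $\Z_{2}^{2}$ realizing the weak equivalence acts on all the data through left multiplication of characteristic matrices by some $X\in GL(2;\Z/2\Z)$; since $X$ sends the standard characteristic function on $\Delta^{2}$ (resp.\ on $I^{2}$) to an equivalent one, the standard $\R P^{2}$ and $T^{2}$ are equivariantly homeomorphic to their images under $X$, and therefore so is the whole connected sum. This last bookkeeping step — confirming that the $0$-dimensional pieces are exactly the standard $\R P^{2}$ and $T^{2}$, and that the weak equivalence descends to a genuine equivariant homeomorphism — is the only point requiring care; all the substantive content is already contained in Theorem \ref{construction} and Remark \ref{relation with the connected sum}.
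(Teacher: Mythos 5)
Your proposal takes essentially the same route as the paper, which simply cites Theorem \ref{construction} together with Remark \ref{relation with the connected sum} without further elaboration: set $k=1$, observe that the fibre $\R P^{0}$ is a point so $P(\xi)=M^{2}$, note that $\sharp_{\Delta^{0}}$ is the ordinary equivariant connected sum, and identify the $k=1$ specializations of the matrices $A,B$ from the theorem's proof with the standard characteristic functions of $\R P^{2}$ and $T^{2}$. The additional paragraph promoting ``weak equivariantly homeomorphic'' to ``equivariantly homeomorphic'' is a genuine subtlety that the paper sweeps under the rug; your argument there is somewhat informal (you should really observe that in dimension $2$ each left-$GL(2;\Z/2\Z)$ twist of the triangle's unique characteristic function, respectively of the ``stuck'' square's characteristic function, is realized by an equivariant self-homeomorphism of $\R P^{2}$, resp.\ $T^{2}$, so the automorphism can be absorbed), but this does not constitute a different approach, only a fuller unpacking of what the paper leaves implicit.
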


Here, in Corollary \ref{2-dim}, the standard $\Z_{2}^{n}$-action on $\R P^{n}$ is defined by
\begin{eqnarray*}
(t_{1},\ldots, t_{n})\cdot [r_{0}:r_{1}\cdots : r_{n}]\mapsto [r_{0}:t_{1}r_{1}\cdots : t_{n}r_{n}]
\end{eqnarray*}
where $(t_{1},\ldots, t_{n})\in \Z_{2}^{n}$ and $[r_{0}:r_{1}\cdots : r_{n}]\in \R P^{n}$,
and we regard $T^{2}$ as the product of two $\R P^{1}$ with the standard $\Z_{2}$-actions.

\begin{remark}
In Theorem \ref{construction},
in order to construct the projective bundle from the basic projective bundles, i.e., $P(\kappa)$ and $P(\zeta)$,
we do not need to use the operation $\sharp_{\Delta^{k-1}}^{-1}$, i.e., it is enough to use the projective fibre sum $\sharp_{\Delta^{k-1}}$ only.
\end{remark}


\begin{remark}
Recall that the real line bundle over $\R P^{1}\cong S^{1}$ (i.e., $1$-dimensional small cover) can be written as
the quotient space
$S^{1}\times_{\Z_{2}}\R_{\alpha}$ by the free $\Z_{2}$ action on $S^{1}$ and the representation $\alpha:\Z_{2}\to \Z_{2}\in (\Z/2\Z)$ (i.e., trivial or non-trivial) and that all vector bundles over $S^{1}$ can be split into line bundles.
Therefore, all projectivization of vector bundles over $S^{1}$ is homeomorphic to
\[
S^{1}\times_{\Z_{2}}P(\R_{\alpha_{1}}\oplus\cdots\oplus\R_{\alpha_{k-1}}\oplus\underline{\R})
\]
for some vector $(\alpha_{1},\ldots,\alpha_{k-1})\in (\Z/2\Z)^{k-1}$, where $S^{1}\times_{\Z_{2}}\underline{\R}=S^{1}\times \R$
(i.e., the trivial bundle).
\end{remark}

\section{Topological classification of projective bundles over $\R P(2)$ and $T^{2}$}
\label{sect6}

In this section, we give the topological classification of $P(\kappa)$ and $P(\zeta)$ appeared in Theorem \ref{construction}, i.e.,
the classification of the topological types of projective bundles over $\R P(2)$ and $T^{2}$.
As we assumed before, all vector bundles in this section are split into the Whitney sum of line bundles.

\subsection{Topological classification of projective bundles over $\R P(2)$}
\label{sect6.1}

The classification of projective bundles over $\R P(2)$ is known by Masuda's paper \cite{M}.
Due to \cite{M},
we have $q\equiv q'$ or $k-q'$ (${\rm mod}\ 4$) if and only if
$S^{2}\times_{\Z_{2}}P(q\gamma\oplus (k-q)\epsilon)\cong S^{2}\times_{\Z_{2}}P(q'\gamma\oplus (k-q')\epsilon)$,
where $\Z_{2}$ acts on $S^{2}$ diagonally and $\gamma$ represents the tautological line bundle over $\R P(2)$, i..e,
$E(\gamma)\equiv S^{2}\times_{\Z_{2}} \R$ such that $\Z_{2}$ acts on $\R$ standardly.
Note that a line bundle over $\R P(2)$ is $\gamma$ or the trivial line bundle $\epsilon$.
By using this fact (and comparing the cohomology rings), we can easily check the following proposition:
\begin{proposition}
\label{2-proj}
Let $P(\kappa)\cong P(q\gamma\oplus(k-q)\epsilon)$ be a projective bundle over $\R P(2)$.
Then, it is homeomorphic to one of the following distinct manifolds.
\begin{enumerate}
\item The case $k\equiv 0$ $({\rm mod}\ 4)$:
\begin{enumerate}
\item if $q\equiv 0$ $({\rm mod}\ 4)$, then $P(q\gamma\oplus(k-q)\epsilon)\cong \R P(2)\times \R P(k-1)$;
\item if $q\equiv 1,\ 3$ $({\rm mod}\ 4)$, then $P(q\gamma\oplus(k-q)\epsilon)\cong S^{2}\times_{\Z_{2}}P(\gamma\oplus (k-1)\epsilon)$;
\item if $q\equiv 2$ $({\rm mod}\ 4)$, then $P(q\gamma\oplus(k-q)\epsilon)\cong S^{2}\times_{\Z_{2}}P(2\gamma\oplus (k-2)\epsilon)$.
\end{enumerate}
\item The case $k\equiv 1$ $({\rm mod}\ 4)$:
\begin{enumerate}
\item if $q\equiv 0,\ 1$ $({\rm mod}\ 4)$, then $P(q\gamma\oplus(k-q)\epsilon)\cong \R P(2)\times \R P(k-1)$;
\item if $q\equiv 2,\ 3$ $({\rm mod}\ 4)$, then $P(q\gamma\oplus(k-q)\epsilon)\cong S^{2}\times_{\Z_{2}}P(2\gamma\oplus (k-2)\epsilon)$.
\end{enumerate}
\item The case $k\equiv 2$ $({\rm mod}\ 4)$:
\begin{enumerate}
\item if $q\equiv 0,\ 2$ $({\rm mod}\ 4)$, then $P(q\gamma\oplus(k-q)\epsilon)\cong \R P(2)\times \R P(k-1)$;
\item if $q\equiv 1$ $({\rm mod}\ 4)$, then $P(q\gamma\oplus(k-q)\epsilon)\cong S^{2}\times_{\Z_{2}}P(\gamma\oplus (k-1)\epsilon)$;
\item if $q\equiv 3$ $({\rm mod}\ 4)$, then $P(q\gamma\oplus(k-q)\epsilon)\cong S^{2}\times_{\Z_{2}}P(3\gamma\oplus (k-3)\epsilon)$.
\end{enumerate}
\item The case $k\equiv 3$ $({\rm mod}\ 4)$:
\begin{enumerate}
\item if $q\equiv 0,\ 3$ $({\rm mod}\ 4)$, then $P(q\gamma\oplus(k-q)\epsilon)\cong \R P(2)\times \R P(k-1)$;
\item if $q\equiv 1,\ 2$ $({\rm mod}\ 4)$, then $P(q\gamma\oplus(k-q)\epsilon)\cong S^{2}\times_{\Z_{2}}P(\gamma\oplus (k-1)\epsilon)$.
\end{enumerate}
\end{enumerate}
\end{proposition}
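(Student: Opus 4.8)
The plan is to reduce the classification entirely to the congruence criterion of Masuda quoted above, after which only a short orbit count and a distinctness check remain.

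First I would record that, since $H^{1}(\R P(2);\Z/2\Z)\simeq\Z/2\Z$, there are exactly two real line bundles over $\R P(2)$, the trivial bundle $\epsilon$ and the tautological bundle $\gamma$; hence any Whitney sum $\kappa$ of $k$ line bundles over $\R P(2)$ is isomorphic to $q\gamma\oplus(k-q)\epsilon$ for a unique integer $q$ with $0\le q\le k$. Writing $\gamma\equiv S^{2}\times_{\Z_{2}}\R$ as in Section \ref{sect4.2} (the case $\mathcal{Z}_{\Delta^{2}}=S^{2}$, $l=1$), the bundle $q\gamma\oplus(k-q)\epsilon$ is $S^{2}\times_{\Z_{2}}(\R^{q}\oplus\R^{k-q})$ with $\Z_{2}$ acting by $-1$ on the first $q$ coordinates, so that $P(q\gamma\oplus(k-q)\epsilon)$ is precisely the manifold $S^{2}\times_{\Z_{2}}P(q\gamma\oplus(k-q)\epsilon)$ appearing in the cited fact, and Masuda's criterion applies verbatim.

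Next comes the combinatorial heart of the argument. By Masuda's fact, $P(q\gamma\oplus(k-q)\epsilon)\cong P(q'\gamma\oplus(k-q')\epsilon)$ if and only if $q'\equiv q$ or $q'\equiv k-q\pmod 4$, so the homeomorphism type depends only on the orbit of $q\bmod 4$ under the involution $q\mapsto k-q$. I would simply run through the four residues of $k$ modulo $4$: in each case list the orbits of $\{0,1,2,3\}$ under $q\mapsto k-q$ and choose the least admissible representative in each orbit. The orbit containing $q=0$ (equivalently, containing $k\bmod 4$) is represented by the trivial bundle $k\epsilon$, and $P(k\epsilon)=\R P(2)\times\R P(k-1)$; the remaining orbits yield $S^{2}\times_{\Z_{2}}P(\gamma\oplus(k-1)\epsilon)$, $S^{2}\times_{\Z_{2}}P(2\gamma\oplus(k-2)\epsilon)$, or $S^{2}\times_{\Z_{2}}P(3\gamma\oplus(k-3)\epsilon)$ according to the residue, a subcase being understood as empty whenever no $q\le k$ lies in the relevant residue class (for instance $q\equiv 3$ when $k=2$). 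This reproduces exactly the four lists in the statement.

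Finally, distinctness of the manifolds within each case is already the ``only if'' half of Masuda's criterion, so nothing further is strictly needed; but I would also confirm it by comparing cohomology rings through \eqref{algebra structure}. With $t=w_{1}(\gamma)$ the generator of $H^{*}(\R P(2))\simeq\Z/2\Z[t]/\langle t^{3}\rangle$, one has $w(q\gamma\oplus(k-q)\epsilon)=(1+t)^{q}$, so only $w_{1}=qt$ and $w_{2}=(q(q-1)/2)\,t^{2}$ survive, and the pair $\bigl(q\bmod 2,\ (q(q-1)/2)\bmod 2\bigr)$ separates the four residues of $q$ modulo $4$; the extra identification $q\sim k-q$ is realized geometrically by $P(\xi\otimes\gamma)\cong P(\xi)$. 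I do not anticipate any genuine obstacle here: the only delicate point is the bookkeeping of the orbits and of the vacuous subcases, everything else being a direct application of the quoted results.
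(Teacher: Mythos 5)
Your proposal is correct and follows essentially the same route as the paper: reduce to Masuda's criterion ($q\equiv q'$ or $k-q' \pmod 4$), enumerate the orbits of $q\bmod 4$ under $q\mapsto k-q$ for each residue of $k$, pick the minimal representative in each orbit, and invoke the ``only if'' half of the criterion (or a cohomology check) for distinctness. The paper's own proof is just the one-line remark ``by using this fact (and comparing the cohomology rings), we can easily check the following proposition,'' so you have simply filled in the bookkeeping the authors elided.
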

Note that the moment-angle manifold over $\R P(2)$ is $S^{2}$.

\subsection{Topological classification of projective bundles over $T^{2}$}
\label{sect6.2}

Next we classify projective bundles over $T^{2}$.
Let $\gamma_{i}$ be the pull back of the canonical line bundle over $S^{1}$ by the $i$th factor projection $\pi_{i}:T^{2}\to S^{1}$ ($i=1,\ 2$).
We can easily show that line bundles over $T^{2}$ is completely determined by its $1$st Stiefel-Whitney classes via
$[T^{2},\ B\Z_{2}]\simeq H^{1}(T^{2};\ \Z/2\Z)\simeq (\Z/2\Z)^{2}$.
Therefore, all of the line bundles over $T^{2}$ are $\epsilon$, $\gamma_{1}$, $\gamma_{2}$ and $\gamma_{1}\otimes\gamma_{2}$.
By the definition of $\gamma_{i}$, we can easily show that
\begin{eqnarray}
\label{tech1}
\gamma_{i}\oplus \gamma_{i}=\pi_{i}^{*}(\gamma\oplus\gamma)=\pi_{i}^{*}(2\epsilon)=2\epsilon.
\end{eqnarray}
Therefore, we also have
\begin{eqnarray}
\label{tech2}
(\gamma_{1}\otimes \gamma_{2})\oplus (\gamma_{1}\otimes \gamma_{2})=\gamma_{1}\otimes (\gamma_{2}\oplus \gamma_{2})
=\gamma_{1}\otimes 2\epsilon=\gamma_{1}\oplus\gamma_{1}=2\epsilon.
\end{eqnarray}
Let $\zeta$ be a $k$-dimensional vector bundle ($k\ge 2$).
Because $\dim T^{2}=2$, if $k\ge 2$ then $\zeta$ is in the stable range.
Therefore,
we have that
\[
\zeta\equiv \zeta^{2}\oplus (k-2)\epsilon,
\]
where $\zeta^{2}$ is a $2$-dimensional vector bundle over $T^{2}$.
Hence, if $\zeta$ is a Whitney sum of $k$ line bundles then $\zeta$ is isomorphic to one of the followings by computing the Stiefel-Whitney class:
\begin{eqnarray*}
k\epsilon; \\
\gamma_{1}\oplus (k-1)\epsilon; \\
\gamma_{2}\oplus (k-1)\epsilon; \\
(\gamma_{1}\otimes \gamma_{2})\oplus (k-1)\epsilon; \\
\gamma_{1}\oplus \gamma_{2}\oplus (k-2)\epsilon; \\
\gamma_{1}\oplus (\gamma_{1}\otimes\gamma_{2})\oplus (k-2)\epsilon; \\
\gamma_{2}\oplus (\gamma_{1}\otimes\gamma_{2})\oplus (k-2)\epsilon.
\end{eqnarray*}
By using this classification, we can prove the following proposition.

\begin{proposition}
\label{2-torus}
Let $P(\zeta)$ be a projective bundle over $T^{2}$.
Then it is homeomorphic to one of the following manifolds:
\begin{enumerate}
\item The trivial bundle $T^{2}\times \R P(k-1)$;
\item The non-trivial bundle of type $T^{2}\times_{\Z_{2}^{2}}P(\R_{\rho_{1}}
\oplus \R_{\rho_{2}}\oplus \underline{\R}^{k-2})$;
\item The non-trivial bundle of type $T^{2}\times_{\Z_{2}^{2}}P(\R_{\rho_{1}}\oplus \underline{\R}^{k-1})\cong T^{2}\times_{\Z_{2}^{2}}P(\R_{\rho_{2}}\oplus \underline{\R}^{k-1})$,
\end{enumerate}
where $\rho_{i}:\Z_{2}^{2}\to \Z_{2}$ is the $i$th projection and $\underline{\R}$ is the trivial representation space.

When $k>2$, each manifold above has different topological types; however, 
when $k=2$, both of two non-trivial bundles above are isomorphic to the non-trivial bundle $T^{2}\times_{\Z_{2}^{2}}P(\R_{\rho_{1}}\oplus \underline{\R})$.
\end{proposition}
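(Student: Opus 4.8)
The starting point is the list, compiled just above, of the seven Whitney sums of $k$ line bundles over $T^2$. Since $P(\zeta)\cong P(\zeta\otimes L)$ for every line bundle $L$, the first step is to collapse this list using the identities $2\gamma_i\cong 2\epsilon$ and $2(\gamma_1\otimes\gamma_2)\cong 2\epsilon$ of \eqref{tech1}--\eqref{tech2}: twisting a sum that has a chosen nontrivial summand $\gamma$ by $\gamma$ itself replaces that summand by $\epsilon$ while turning the old trivial summands into copies of $\gamma$, which then cancel in pairs, so that after iteration every $P(\zeta)$ is homeomorphic to $P$ of one of $k\epsilon$, $\gamma\oplus(k-1)\epsilon$ (with $\gamma$ any one of the three nontrivial line bundles), or $\gamma\oplus\gamma'\oplus(k-2)\epsilon$ (with $\gamma\neq\gamma'$ both nontrivial). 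The second step removes the choice of which nontrivial line bundle(s) occur: $GL(2,\Z)$ acts on $T^2$ by self-homeomorphisms whose induced action on $H^1(T^2;\Z/2\Z)\cong(\Z/2\Z)^2$ is all of $GL(2,\Z/2\Z)$, which is transitive on nonzero vectors and on unordered pairs of distinct nonzero vectors; pulling bundles back along these homeomorphisms therefore identifies all ``one nontrivial summand'' models with one another and all ``two nontrivial summands'' models with one another. Via the correspondence of Section \ref{sect4.2} between line bundles over $T^2$ and representations $\Z_2^l\to\Z_2$ (so that $\R_{\rho_i}$ is the line bundle with $w_1=x_i$), these three surviving models are precisely the manifolds (1), (2), (3) in the statement, which proves the first assertion.

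For the claims about topological types I would pass to cohomology. By \eqref{algebra structure} and $H^*(T^2;\Z/2\Z)=\Z/2\Z[x_1,x_2]/\langle x_1^2,x_2^2\rangle$, writing $y=w_1(\gamma_\xi)$, the three rings are $\Z/2\Z[x_1,x_2,y]/\langle x_1^2,x_2^2,R\rangle$ with $R$ equal to $y^k$, to $y^k+x_1y^{k-1}$, and to $y^k+(x_1+x_2)y^{k-1}+x_1x_2y^{k-2}$, respectively, and in each case $1,y,\dots,y^{k-1}$ is a free basis over $\Z_2[x_1,x_2]/\langle x_1^2,x_2^2\rangle$. The model (2) is separated from the trivial one by Proposition \ref{cohomology}, since $w=(1+x_1)(1+x_2)$ has nonzero degree-two part and hence is not $(1+X)^k$; separating the three models for $k>2$ then reduces to comparing the three relations $R$ in the top degrees $k-1,k,k+1$ — e.g. by determining, in each ring, the set of $z\in H^1$ with $z^k=0$ together with more refined data about which of the classes $x_iy^{k-1}$ and $x_1x_2y^{k-2}$ can be realized as $z^k$. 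When $k=2$ the fibre is $\R P^1\cong S^1$, and here $P(\gamma\oplus\gamma')$ already reduces to $P(\gamma\oplus\epsilon)$: twisting by $\gamma$ gives $P(\epsilon\oplus(\gamma\otimes\gamma'))$, and the $GL(2,\Z)$-symmetry moves $\gamma\otimes\gamma'$ to $\gamma$, yielding the stated coincidence $T^2\times_{\Z_2^2}P(\R_{\rho_1}\oplus\R_{\rho_2})\cong T^2\times_{\Z_2^2}P(\R_{\rho_1}\oplus\underline{\R})$.

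The main obstacle is exactly this ring comparison in the range $k>2$: all three rings have the same graded dimensions, the same image of $H^1\otimes H^1\to H^2$, and the same squaring map $H^1\to H^2$, so there is no single obvious invariant and one is forced into an explicit, somewhat delicate analysis of how the defining relation $R$ propagates through degrees $k-1,k,k+1$; this computation is moreover sensitive to the parity of $k$, so the bookkeeping must be organized around that. A secondary point is to record which of the identifications above are realized equivariantly for the $\Z_2^{n}\times\Z_2^{k-1}$-action — the swap $\rho_1\leftrightarrow\rho_2$ comes from an automorphism of the acting group, whereas the $GL(2,\Z)$-moves come only from homeomorphisms of total spaces — so that the conclusion is compatible with the weak-equivariant formulation used in Theorem \ref{construction}.
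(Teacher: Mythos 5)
Your first half is a genuine alternative to the paper's argument: where the paper identifies $T^{2}\times_{\Z_{2}^{2}}P(\R_{\rho_{1}}\oplus\underline{\R}^{k-1})$ with $T^{2}\times_{\Z_{2}^{2}}P(\R_{\rho_{2}}\oplus\underline{\R}^{k-1})$ by exhibiting both as $S^{1}\times\bigl(S^{1}\times_{\Z_{2}}P(\R_{\rho}\oplus\underline{\R}^{k-1})\bigr)$, and handles the $k=2$ coincidence via the kernel $\Delta\subset T^{2}$, you invoke the $GL(2,\Z)$-action on $T^{2}$ acting through $GL(2,\Z/2\Z)$ on $H^{1}(T^{2};\Z/2\Z)$. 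That is cleaner and buys transitivity both on nonzero vectors and on unordered pairs, so it collapses the ``one nontrivial summand'' and ``two nontrivial summand'' ambiguities in one stroke. The rest — the twisting reduction from \eqref{tech1}--\eqref{tech2} and then passing to the three Borel--Hirzebruch relations — is the same route the paper takes.

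The step you identify as the main obstacle, however, is not merely delicate: it cannot succeed, and the paper's own proof offers no argument for it either (after the table it simply asserts ``This implies the bundles ... are not homeomorphic''). For $k$ odd your own twisting procedure already collapses models (1) and (3): since $k-1$ is even, \eqref{tech1} gives $(k-1)\gamma_{1}\cong(k-1)\epsilon$, so $(\gamma_{1}\oplus(k-1)\epsilon)\otimes\gamma_{1}\cong\epsilon\oplus(k-1)\gamma_{1}\cong k\epsilon$ and hence $P(\gamma_{1}\oplus(k-1)\epsilon)\cong T^{2}\times\R P^{k-1}$; cohomologically, $(z+x)^{k}=z^{k}+kz^{k-1}x=z^{k}+z^{k-1}x$ vanishes, giving an explicit isomorphism of the ring with relation $z^{k}+z^{k-1}x$ onto the trivial one. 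For $k$ even the trivial ring is indeed separated from the other two (the nullity of $v\mapsto v^{k}$ on $H^{1}$ drops from $3$ to $2$), but the two remaining rings are isomorphic: in $\Z/2\Z[x,y,z]/\langle x^{2},y^{2}\rangle$ the substitution $x\mapsto y$, $y\mapsto x+y$, $z\mapsto z+y$ carries $z^{k}+z^{k-1}(x+y)+z^{k-2}xy$ exactly to $z^{k}+z^{k-1}x$. So the cohomological comparison you propose cannot produce the stated three-way separation, and for $k$ odd the distinctness assertion is simply false as a statement about total spaces. A secondary issue you inherit from the surrounding text: the list of seven Whitney sums is incomplete — $\gamma_{1}\oplus\gamma_{2}\oplus(\gamma_{1}\otimes\gamma_{2})\oplus(k-3)\epsilon$ has $w=1+xy$, is a sum of line bundles not reducible (for $k$ even) to any of the three models by twisting, and yields the additional relation $z^{k}+z^{k-2}xy$; ``cancel in pairs'' does not see it.
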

\begin{proof}
Recall that $P(\zeta\otimes \gamma)=P(\zeta)$ for all line bundles $\gamma$.
Therefore, by using the classification of vector bundles over $T^{2}$ just before this proposition and the relations \eqref{tech1}, \eqref{tech2}, 
it is easy to check that
the topological types of $P(\zeta)$ are one of the followings.
\begin{enumerate}
\item The case $k\equiv 0$ $({\rm mod}\ 2)$:
\begin{enumerate}
\item $P(k\epsilon)\cong T^{2}\times \R P(k-1)$;
\item $P((\gamma_{1}\otimes\gamma_{2})\oplus(k-1)\epsilon)\cong P(\gamma_{1}\oplus\gamma_{2}\oplus(k-2)\epsilon)\cong T^{2}\times_{\Z_{2}^{2}}P(\R_{\rho_{1}}
\oplus \R_{\rho_{2}}\oplus \underline{\R}^{k-2})$;
\item $P(\gamma_{1}\oplus(k-1)\epsilon)\cong P((\gamma_{1}\otimes\gamma_{2})\oplus\gamma_{2}\oplus(k-2)\epsilon)\cong T^{2}\times_{\Z_{2}^{2}}P(\R_{\rho_{1}}
\oplus \underline{\R}^{k-1})$;
\item $P(\gamma_{2}\oplus(k-1)\epsilon)\cong P((\gamma_{1}\otimes\gamma_{2})\oplus\gamma_{1}\oplus(k-2)\epsilon)\cong T^{2}\times_{\Z_{2}^{2}}P(\R_{\rho_{2}}
\oplus \underline{\R}^{k-1})$;
\end{enumerate}
\item The case $k\equiv 1$ $({\rm mod}\ 2)$:
\begin{enumerate}
\item $P(k\epsilon)\cong T^{2}\times \R P(k-1)$;
\item $P((\gamma_{1}\otimes \gamma_{2})\oplus (k-1)\epsilon)\cong P(\gamma_{1}\oplus (k-1)\epsilon)\cong P(\gamma_{2}\oplus (k-1)\epsilon)\cong
T^{2}\times_{\Z_{2}^{2}}P(\R_{\rho_{1}}\oplus \underline{\R}^{k-1})$;
\item $P((\gamma_{1}\oplus \gamma_{2})\oplus (k-2)\epsilon)\cong P((\gamma_{1}\otimes \gamma_{2})\oplus\gamma_{1}\oplus (k-2)\epsilon)
\cong P((\gamma_{1}\otimes \gamma_{2})\oplus\gamma_{2}\oplus (k-2)\epsilon)\cong
T^{2}\times_{\Z_{2}^{2}}P(\R_{\rho_{1}}\oplus\R_{\rho_{2}}\oplus \underline{\R}^{k-2})$.
\end{enumerate}
\end{enumerate}
By using the Borel-Hirzebruch formula, we have the cohomology ring of $P(\zeta)$ as the following list:
\begin{center}
\begin{tabular}{|c|c|}
\hline
$P(\zeta)$ & $H^{*}(\cdot)$  \\
\hline
\hline
$T^{2}\times \R P(k-1)$ & $\Z/2\Z[x,\ y,\ z]/\langle x^{2},\ y^{2},\ z^{k}\rangle$  \\[2pt]
\hline
$T^{2}\times_{\Z_{2}^{2}}P(\R_{\rho_{1}}\oplus \R_{\rho_{2}}\oplus \underline{\R}^{k-2})$ & $\Z/2\Z[x,\ y,\ z]/\langle x^{2},\ y^{2},\ z^{k}+z^{k-1}(x+y)+z^{k-2} x y\rangle$  \\[2pt]
\hline
$T^{2}\times_{\Z_{2}^{2}}P(\R_{\rho_{1}}\oplus \underline{\R}^{k-1})$ & $\Z/2\Z[x,\ y,\ z]/\langle x^{2},\ y^{2},\ z^{k}+z^{k-1}x\rangle$  \\[2pt]
\hline
$T^{2}\times_{\Z_{2}^{2}}P(\R_{\rho_{2}}\oplus \underline{\R}^{k-1})$ & $\Z/2\Z[x,\ y,\ z]/\langle x^{2},\ y^{2},\ z^{k}+z^{k-1}y\rangle$ \\[2pt]
\hline
\end{tabular}
\end{center}
for $\deg x=\deg y=\deg z=1$.
This implies that the bundles as above are not homeomorphic to each other except (1)-(c) and (1)-(d) when $k>2$.
It is easy to check that
\begin{eqnarray*}
T^{2}\times_{\Z_{2}^{2}}P(\R_{\rho_{1}}\oplus \underline{\R}^{k-1})\cong
S^{1}\times (S^{1}\times_{\Z_{2}}P(\R_{\rho}\oplus \underline{\R}^{k-1})) \cong
T^{2}\times_{\Z_{2}^{2}}P(\R_{\rho_{2}}\oplus \underline{\R}^{k-1}),
\end{eqnarray*}
where $S^{1}\times_{\Z_{2}}\R_{\rho}$ is the canonical line bundle over $\R P(1)$.
This establishes the statement except the case when $k=2$.

When $k=2$, we have that
\begin{eqnarray*}
T^{2}\times_{\Z_{2}^{2}}P(\R_{\rho_{1}}\oplus \R_{\rho_{2}})\cong
T^{2}\times_{\Z_{2}^{2}}P(\R_{\rho'}\oplus \underline{\R}),
\end{eqnarray*}
where $\rho':T^{2}\to S^{1}$ is the representation $(t_{1},t_{2})\mapsto t_{1}t_{2}$.
By using the kernel of this representation $\Delta=\{(t,t^{-1})\ |\ t\in S^{1}\}$,
we also have the following homeomorphism:
\begin{eqnarray*}
T^{2}\times_{\Z_{2}^{2}}P(\R_{\rho'}\oplus \underline{\R})\cong
\Delta\times (S^{1}\times_{\Z_{2}} P(\R_{\rho}\oplus \underline{\R}))\cong
S^{1}\times (S^{1}\times_{\Z_{2}}P(\R_{\rho}\oplus \underline{\R})),
\end{eqnarray*}
where $S^{1}\times_{\Z_{2}}\R_{\rho}$ is the canonical line bundle over $\R P(1)$.
Similarly, we have the following homeomorphisms:
\[
T^{2}\times_{\Z_{2}^{2}}P(\R_{\rho_{1}}\oplus \underline{\R})\cong
S^{1}\times (S^{1}\times_{\Z_{2}}P(\R_{\rho}\oplus \underline{\R})) \cong
T^{2}\times_{\Z_{2}^{2}}P(\R_{\rho_{2}}\oplus \underline{\R}).
\]
This also establishes the case when $k=2$.
\end{proof}

Note that the moment-angle manifold over $T^{2}$ is $T^{2}$ itself.

It also follows from the proof of Proposition \ref{2-torus} that the following corollary holds.

\begin{corollary}
\label{cohomological rigidity}
Let $\mathcal{P}(T^{2})$ be the set of all projective bundles over $T^{2}$ and $P(\zeta_{1}),\ P(\zeta_{2})\in \mathcal{P}(T^{2})$.
Then, $H^{*}(P(\zeta_{1}))\simeq H^{*}(P(\zeta_{2}))$ if and only if $P(\zeta_{1})\cong P(\zeta_{2})$ (homeomorphic), i.e.,
$\mathcal{P}(T^{2})$ satisfies cohomological rigidity.
\end{corollary}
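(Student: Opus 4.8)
The forward implication is trivial: homeomorphic spaces have isomorphic cohomology rings (with $\Z/2\Z$-coefficients, as always in this paper). So the content is the converse, and the plan is to read it off Proposition \ref{2-torus} together with the computations in its proof. Since $P(\zeta)$ is a closed manifold of dimension $k+1$ when its fibre is $\R P^{k-1}$, the number $k$ is recovered from the top nonzero degree of $H^{*}(P(\zeta))$, so we may fix $k$ throughout. By Proposition \ref{2-torus}, every such $P(\zeta)$ is homeomorphic to one of the three model spaces $T^{2}\times \R P^{k-1}$, $T^{2}\times_{\Z_{2}^{2}}P(\R_{\rho_{1}}\oplus \R_{\rho_{2}}\oplus \underline{\R}^{k-2})$, or $T^{2}\times_{\Z_{2}^{2}}P(\R_{\rho_{1}}\oplus \underline{\R}^{k-1})$; hence it suffices to show that on this finite list the cohomology ring is a complete homeomorphism invariant. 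Granting that, $H^{*}(P(\zeta_{1}))\simeq H^{*}(P(\zeta_{2}))$ forces the models realizing $P(\zeta_{1})$ and $P(\zeta_{2})$ to be homeomorphic, so $P(\zeta_{1})\cong P(\zeta_{2})$.

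That $T^{2}\times \R P^{k-1}$ is detected cohomologically among these models follows already from Proposition \ref{cohomology}: $H^{*}(P(\zeta))\simeq H^{*}(T^{2}\times \R P^{k-1})$ holds exactly when $w(\zeta)=(1+X)^{k}$ for some $X\in H^{1}(T^{2})$, and the proof of Proposition \ref{2-torus} shows that these are precisely the $P(\zeta)$ homeomorphic to $T^{2}\times \R P^{k-1}$. It therefore remains to deal with the two nontrivial models, and for them I would check two things: that their cohomology rings are isomorphic, and that they are homeomorphic. For the first, I would start from the Borel--Hirzebruch presentations recorded in the proof of Proposition \ref{2-torus}, namely $\Z/2\Z[x,y,z]/\langle x^{2},y^{2},z^{k}+z^{k-1}(x+y)+z^{k-2}xy\rangle$ and $\Z/2\Z[x,y,z]/\langle x^{2},y^{2},z^{k}+z^{k-1}x\rangle$, and exhibit a linear change of the degree-one generators carrying one onto the other (for instance, when $k$ is even, $z\mapsto z+x$ followed by $y\mapsto x+y$ turns the first relation into $z^{k-1}(z+y)$); the ring-theoretic quantity $\#\{u\in H^{1}: u^{k}=0\}$, which is computed directly from \eqref{BH}, independently confirms the isomorphism class.

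The step I expect to be the main obstacle is the second one: producing honest homeomorphisms between nontrivial model spaces that happen to share a cohomology ring, since this is exactly the phenomenon the statement asserts and that the rings cannot detect on their own. Here one leans on the bundle-level identifications already in the proof of Proposition \ref{2-torus}: the equivalence of the two descriptions in part (3) of that proposition, and the collapse when $k=2$, are precisely the homeomorphisms written there. More generally, after tensoring by $\gamma_{1}$ and using $\gamma_{i}\oplus\gamma_{i}\cong 2\epsilon$ from \eqref{tech1}, the bundle $\gamma_{1}\oplus\gamma_{2}\oplus(k-2)\epsilon$ can be rewritten as $(\gamma_{1}\otimes\gamma_{2})\oplus(k-1)\epsilon$, which is pulled back along a coordinate projection $T^{2}\to S^{1}$; hence $T^{2}\times_{\Z_{2}^{2}}P(\R_{\rho_{1}}\oplus \R_{\rho_{2}}\oplus \underline{\R}^{k-2})$ is $S^{1}$ times a projective bundle over $S^{1}$, which one matches with the description of $T^{2}\times_{\Z_{2}^{2}}P(\R_{\rho_{1}}\oplus \underline{\R}^{k-1})$ as $S^{1}\times\bigl(S^{1}\times_{\Z_{2}}P(\R_{\rho}\oplus \underline{\R}^{k-1})\bigr)$ from the proof of Proposition \ref{2-torus}. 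Once all these identifications are assembled, the finite list of model spaces breaks into homeomorphism classes exactly along isomorphism classes of cohomology rings, which is the assertion.
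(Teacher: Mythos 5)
Your key step---that the two nontrivial models always share a cohomology ring and are always homeomorphic---is correct for $k$ even, where the substitution $z\mapsto z+x,\ y\mapsto x+y$ works exactly as you computed; but it is \emph{false} for $k$ odd, so your plan does not cover that case. Worse, the invariant $\#\{u\in H^{1}: u^{k}=0\}$ that you offer as independent confirmation of the isomorphism in fact refutes it when $k$ is odd. Writing $u=ax+by+cz$ and using $u^{k-1}=(u^{2})^{(k-1)/2}=cz^{k-1}$ (valid since $k-1$ is even), one finds
$u^{k}=c\bigl((a+c)xz^{k-1}+(b+c)yz^{k-1}+c\,xyz^{k-2}\bigr)$
in $\Z/2\Z[x,y,z]/\langle x^{2},y^{2},z^{k}+z^{k-1}(x+y)+z^{k-2}xy\rangle$, which vanishes only for $c=0$ (four solutions), whereas in $\Z/2\Z[x,y,z]/\langle x^{2},y^{2},z^{k}+z^{k-1}x\rangle$ one gets $u^{k}=c\bigl((a+c)xz^{k-1}+byz^{k-1}\bigr)$, which also vanishes for $u=x+z$ (five solutions). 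So for $k$ odd the two rings are genuinely non-isomorphic.

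What actually happens for $k$ odd is a different collapse, and one that exposes an error in Proposition~\ref{2-torus} itself: model~(3) coincides with the \emph{trivial} model~(1), not with~(2). On rings, $z\mapsto z+x$ sends $z^{k}$ to $(z+x)^{k}=z^{k}+kxz^{k-1}=z^{k}+xz^{k-1}$ for $k$ odd, so $\Z/2\Z[x,y,z]/\langle x^{2},y^{2},z^{k}\rangle\simeq\Z/2\Z[x,y,z]/\langle x^{2},y^{2},z^{k}+z^{k-1}x\rangle$. On manifolds, $T^{2}\times_{\Z_{2}^{2}}P(\R_{\rho_{1}}\oplus\underline{\R}^{k-1})=P(\gamma_{1}\oplus(k-1)\epsilon)\cong P\bigl(\gamma_{1}\otimes(\gamma_{1}\oplus(k-1)\epsilon)\bigr)=P(\epsilon\oplus(k-1)\gamma_{1})\cong P(k\epsilon)$, because $(k-1)\gamma_{1}\cong(k-1)\epsilon$ when $k-1$ is even by \eqref{tech1}. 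Consequently you cannot, as you do, quote ``the proof of Proposition~\ref{2-torus}'' to conclude that $w(\zeta)=(1+X)^{k}$ picks out exactly the trivial bundles: that proof claims $P(\gamma_{1}\oplus(k-1)\epsilon)$ is a distinct nontrivial type, and the proposition's assertion that the three listed models are pairwise distinct for every $k>2$ is simply incorrect. In each parity there are exactly two homeomorphism types, with (2)$\,\cong\,$(3) for $k$ even and (1)$\,\cong\,$(3) for $k$ odd, and the corresponding rings collapse in the same pattern; the corollary is true, but a correct proof needs this parity split and these corrected identifications, neither of which appears in your argument (nor, for that matter, in the paper's).
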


\begin{remark}
\label{remark from Masuda}
Let $\mathcal{P}(\R P(2))$ be the set of all projective bundles over $\R P(2)$.
Due to \cite[Theorem 3.3]{M},
$\mathcal{P}(\R P(2))$ also satisfies cohomological rigidity.
\end{remark}

\section{Topological triviality of some projective bundles over real projective spaces}
\label{sect7}

Let $\tau_{\R P^{n}}$ be the tangent bundle over $\R P^{n}$.

The following relation is well-known:
\begin{eqnarray}
\label{basic}
\epsilon\oplus \tau_{\R P^{n}}\equiv (n+1)\gamma,
\end{eqnarray}
where $\epsilon$ is the trivial line bundle over $\R P^{n}$ and $(n+1)\gamma$ represents the $(n+1)$-times Whitney sum of
the tautological line bundle $\gamma$.
This relation \eqref{basic} shows that
\[
P(\epsilon\oplus\tau_{\R P^{n}})\cong P((n+1)\gamma)\cong P((n+1)(\gamma\otimes \gamma))\cong P((n+1)\epsilon)\cong \R P^{n}\times \R P^{n}.
\]
Therefore, the projectivization of $\epsilon\oplus \tau_{\R P^{n}}$ always admits the trivial topology.
Since the line bundle over $\R P^{n}$ is just the trivial bundle $\epsilon$ or the tautological line bundle $\gamma$,
it is natural to ask this question to the projectivization of $\gamma\oplus \tau_{\R P^{n}}$. 
(Note that this might not be a small cover).
In this section, we answer the following question asked by Richard Montgomery motivated
from his interest of the study of singularities \cite{CM}:

\begin{problem}[R. Montgomery]
When does $P(\gamma\oplus\tau_{\R P^{n}})$ have the trivial topology?
In other wards, when is $P(\gamma\oplus\tau_{\R P^{n}})$ diffeomorphic (or homeomorphic) to $\R P^{n}\times \R P^{n}$?
\end{problem}

Namely we prove the following theorem:
\begin{theorem}
\label{main-tri}
The projectivization $P(\gamma\oplus\tau_{\R P^{n}})$ is diffeomorphic to $\R P^{n}\times \R P^{n}$ if and only if
$n=0,\ 2$ or $6$.
\end{theorem}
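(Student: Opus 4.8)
The plan is to establish the two directions separately, handling ``only if'' in two stages. For the ``if'' direction I would prove the sharper statement that the bundle $\gamma\oplus\tau_{\R P^{n}}$ is itself trivial when $n=0,2,6$, so that $P(\gamma\oplus\tau_{\R P^{n}})=P((n+1)\underline{\R})=\R P^{n}\times\R P^{n}$ literally (here $\underline{\R}^{k}$ denotes the trivial rank-$k$ bundle). For the ``only if'' direction I would first invoke Proposition~\ref{cohomology} to force $n+2$ to be a power of $2$, and then rule out all such $n$ with $n\ge 14$ by a finer argument that detects the non-triviality of $\gamma\oplus\tau_{\R P^{n}}$.

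For the sufficiency: since $w(\tau_{\R P^{n}})=(1+a)^{n+1}$ and $w(\gamma)=1+a$ (with $a$ the generator of $H^{1}(\R P^{n};\Z/2)$), we have $w(\gamma\oplus\tau_{\R P^{n}})=(1+a)^{n+2}$; and from \eqref{basic} one gets, in $\widetilde{KO}(\R P^{n})$, the identity $[\tau_{\R P^{n}}]=(n+1)[\gamma]-1$, hence the stable class of $\gamma\oplus\tau_{\R P^{n}}$ equals $(n+2)\mu$, where $\mu=[\gamma]-1$ generates the classical group $\widetilde{KO}(\R P^{n})\cong\Z/2^{\varphi(n)}$, $\varphi(n)=\#\{\,s:0<s\le n,\ s\equiv 0,1,2,4\pmod 8\,\}$. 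For $n=0,2,6$ one has $\varphi(n)=0,2,3$ while $n+2=2,4,8$, so $2^{\varphi(n)}\mid n+2$ and the stable class vanishes; as the rank $n+1$ of $\gamma\oplus\tau_{\R P^{n}}$ exceeds $\dim\R P^{n}=n$, stable triviality upgrades to genuine triviality, which yields the diffeomorphism. (For $n=0$ everything is a point.)

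For the necessity, assume $P(\gamma\oplus\tau_{\R P^{n}})\cong\R P^{n}\times\R P^{n}$. Then the cohomology rings agree, so Proposition~\ref{cohomology} gives $X\in H^{1}(\R P^{n};\Z/2)=\{0,a\}$ with $(1+a)^{n+2}=(1+X)^{n+1}$ in $\Z/2[a]/(a^{n+1})$. The choice $X=a$ would give $a(1+a)^{n+1}=0$, which is false for $n\ge 1$ (the coefficient of $a$ is $1$); the choice $X=0$ forces $\binom{n+2}{i}\equiv 0$ for $1\le i\le n$, and since then $n+2=\binom{n+2}{1}$ is even, Kummer's theorem (applied to the lowest set bit of $n+2$, which lies in the range $[1,n]$) forces $n+2$ to be a power of $2$. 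Hence $n=0$ or $n=2^{m}-2$.

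It remains to exclude $n=2^{m}-2$ with $m\ge 4$, i.e.\ $n\ge 14$; this is the crux. For these $n$ the full mod-$2$ cohomology agrees as a module over the Steenrod algebra (so the Stiefel--Whitney classes of the two manifolds coincide by Wu's formula), and, $n$ being even, both sides have trivial rational cohomology so their Pontryagin classes agree trivially --- thus no homotopy-theoretic invariant can distinguish them, and one must use the smooth (or at least the bundle) structure. The plan is to show that a diffeomorphism $P(\eta)\cong\R P^{n}\times\R P^{n}$, where $\eta:=\gamma\oplus\tau_{\R P^{n}}$, would force $\eta$ to be isomorphic to $(n+1)\underline{\R}$ or to $(n+1)\gamma$; the $\widetilde{KO}$-computation above then gives a contradiction, since $(n+2)\mu=2^{m}\mu\neq 0$ and $(n+2)\mu\neq(n+1)\mu$ in $\Z/2^{\varphi(n)}$ once $\varphi(2^{m}-2)>m$, which holds for $m\ge 4$. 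To extract this rigidity I would work with the tautological line bundle $\gamma_{\eta}$ over $P(\eta)$: its unit circle bundle $S(\gamma_{\eta})$ coincides with the unit sphere bundle $S(\eta)$ and is the double cover of $P(\eta)$ classified by $w_{1}(\gamma_{\eta})$, so under the diffeomorphism $S(\eta)$ becomes one of the double covers of $\R P^{n}\times\R P^{n}$, i.e.\ $S^{n}\times S^{n}$ equipped with a $\Z/2$-action of the form $\mathrm{id}\times(-\mathrm{id})$ or $(-\mathrm{id})\times(-\mathrm{id})$; keeping track simultaneously of the base projection $\rho^{*}\colon H^{*}(\R P^{n})\to H^{*}(P(\eta))$ and of the universal cover $S(\pi^{*}\eta)=S^{n}\times S^{n}$ of $S(\eta)$ (where $\pi\colon S^{n}\to\R P^{n}$), one recovers $\eta$ up to tensoring by a line bundle, completing the argument. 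The main obstacle is precisely this comparison of $\Z/2$-equivariant copies of $S^{n}\times S^{n}$ together with the bookkeeping of which fibration structure the diffeomorphism produces: separating $P(\gamma\oplus\tau_{\R P^{n}})$ from $\R P^{n}\times\R P^{n}$ when $n=2^{m}-2$, $m\ge 4$, is genuinely a question about vector bundles on $\R P^{n}$, not about the homotopy types involved.
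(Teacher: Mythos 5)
The ``if'' direction and the cohomological part of ``only if'' are handled correctly, and by essentially the same route as the paper: for sufficiency you compute the class $(n+2)\mu$ in Adams's group $\widetilde{KO}(\R P^{n})\cong\Z/2^{\varphi(n)}$, observe $2^{\varphi(n)}\mid n+2$ exactly for $n=0,2,6$, and upgrade stable triviality to genuine triviality using that the rank $n+1$ exceeds $n$; for the necessity that $n+2=2^{r}$ you apply Proposition~\ref{cohomology} (the paper instead compares rings directly in Lemma~\ref{cohomological triviality}, but the content is identical).

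The genuine gap is in the crux of ``only if'': excluding $n=2^{m}-2$ for $m\ge 4$. You correctly identify that cohomological invariants cannot separate the two manifolds here and that the right target statement is the dichotomy $\eta\cong(n+1)\underline{\R}$ or $\eta\cong(n+1)\gamma$, after which $(n+2)\mu\neq 0$ and $\mu\neq 0$ in $\widetilde{KO}(\R P^{n})$ give the contradiction. But the step from ``a diffeomorphism $P(\eta)\cong\R P^{n}\times\R P^{n}$ exists'' to this dichotomy is only sketched --- your plan of comparing double covers of $\R P^{n}\times\R P^{n}$ and $\Z/2$-equivariant models of $S^{n}\times S^{n}$ is not carried out, and you say so yourself. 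That sketch also only sees $w_{1}(\gamma_{\eta})$, which is not obviously enough to reconstruct $\eta$ up to line-bundle twist. The paper's actual argument is a different device: Masuda's identity $\tau_{P(E)}\oplus\epsilon\cong\operatorname{Hom}(\eta_{P(E)},\pi^{*}E)\oplus\pi^{*}\tau_{X}$ is applied to both $P(\gamma\oplus\tau_{\R P^{n}})$ and $P((n+1)\epsilon)$, the two expressions are equated via $f^{*}\tau=\tau$, and then the whole identity is pulled back along the cross-section $\sigma$ of $P(\gamma\oplus\tau_{\R P^{n}})\to\R P^{n}$ coming from the zero section of $\tau_{\R P^{n}}\subset\gamma\oplus\tau_{\R P^{n}}$; since $\sigma^{*}\eta_{P}$ is trivial this lands cleanly in $\widetilde{KO}(\R P^{n})$ and produces precisely the two possibilities $(n+1)\epsilon\equiv\gamma\oplus\tau_{\R P^{n}}$ or $(n+1)\gamma\equiv\gamma\oplus\tau_{\R P^{n}}$ (the case split coming from the two possible images of $w_{1}(\eta_{T})$ under $f^{*}$). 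That tangent-bundle-plus-zero-section argument is the ingredient your proposal is missing; without it the ``only if'' direction is incomplete for $m\ge 4$.
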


In order to prove Theorem \ref{main-tri},
we first show when cohomology ring of $P(\gamma\oplus\tau_{\R P^{n}})$ is isomorphic to that of $\R P^{n}\times \R P^{n}$: 
\begin{lemma}
\label{cohomological triviality}
The $\Z/2\Z$-cohomology ring of $P(\gamma\oplus\tau_{\R P^{n}})$ is isomorphic to that of $\R P^{n}\times \R P^{n}$ if and only if
$n+2=2^{r}$ for some $r\in \N$.
\end{lemma}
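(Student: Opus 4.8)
The plan is to compute the $\Z/2\Z$-cohomology ring of $P(\gamma\oplus\tau_{\R P^{n}})$ explicitly using the algebra structure \eqref{algebra structure} and then compare it with $H^{*}(\R P^{n}\times \R P^{n})\simeq \Z/2\Z[a,b]/\langle a^{n+1},b^{n+1}\rangle$. First I would record that $\gamma\oplus\tau_{\R P^{n}}$ is a $(n+1)$-dimensional bundle, and using the relation \eqref{basic}, $w(\tau_{\R P^n})=(1+a)^{n+1}$ where $a\in H^1(\R P^n)$ is the generator, so $w(\gamma\oplus\tau_{\R P^{n}})=(1+a)\cdot(1+a)^{n+1}=(1+a)^{n+2}$. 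Hence by \eqref{algebra structure}, writing $x=w_1(\gamma_\xi)$,
\[
H^{*}(P(\gamma\oplus\tau_{\R P^{n}}))\simeq \Z/2\Z[a,x]\big/\Big\langle\, a^{n+1},\ \textstyle\sum_{i=0}^{n+1}\binom{n+2}{i}a^{i}x^{n+1-i}\,\Big\rangle.
\]
The degree-$(n+1)$ relation is $(x+a)^{n+2}-a^{n+2}$ modulo $a^{n+1}$, i.e. $(x+a)^{n+2}$ since $a^{n+1}=0$ already. So setting $y=x+a$, the ring becomes $\Z/2\Z[a,y]/\langle a^{n+1},(y+?)\dots\rangle$ — more precisely one should be careful, but the point is to see whether a linear change of variables turns the defining ideal into $\langle a^{n+1}, y^{n+1}\rangle$.

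The key step is the following algebraic observation: the ring $R_n:=\Z/2\Z[a,x]/\langle a^{n+1},(x+a)^{n+2}\bmod a^{n+1}\rangle$ is isomorphic to $\Z/2\Z[a,b]/\langle a^{n+1},b^{n+1}\rangle$ if and only if $n+2$ is a power of $2$. For the ``if'' direction, when $n+2=2^r$ we have $(x+a)^{2^r}=x^{2^r}+a^{2^r}$ over $\Z/2\Z$, so the second relation is $x^{2^r}+a^{2^r}=x^{n+2}+a\cdot a^{n+1}=x^{n+2}$ modulo $a^{n+1}$; thus $R_n=\Z/2\Z[a,x]/\langle a^{n+1},x^{n+2}\rangle$, and since $x^{n+2}=x\cdot x^{n+1}$, I would need to check this is actually $\Z/2\Z[a,x]/\langle a^{n+1}, x^{n+1}\rangle$ — here one uses that $x^{n+2}\equiv 0$ forces, together with Poincaré duality / dimension count ($\dim_{\Z/2\Z}=(n+1)^2$), the cleaner presentation; alternatively observe directly that in $P(\gamma\oplus\tau)$ one may instead use $P(\gamma\oplus\tau)\cong P(\tau\oplus\gamma)$ and tautological-class bookkeeping to get $x^{n+1}=0$ up to the right substitution. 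For the ``only if'' direction, the cleanest route is a Poincaré-polynomial / Hilbert-series argument refined by ring structure: if $n+2$ is not a power of $2$, pick the least $i$ with $1\le i\le n+1$ and $\binom{n+2}{i}\equiv 1\pmod 2$ (such $i$ exists and is $<n+2$ by Kummer/Lucas, and one checks $i\le n+1$); then the socle-degree-reduction argument shows there is a relation of the form $a^{i}x^{n+1-i}=\sum(\text{lower }a\text{-power terms})$ which is incompatible with the bigraded structure of $\Z/2\Z[a,b]/\langle a^{n+1},b^{n+1}\rangle$. Concretely, I would compare the dimensions of $H^{d}$ in low degrees, or better, the subring generated by degree-one elements squared: in the product ring every degree-one element $\alpha a+\beta b$ satisfies no relation until degree $n+1$, whereas in $R_n$ with $n+2$ not a $2$-power, the element $x$ (or $x+a$) satisfies a nontrivial relation strictly before degree $n+1$ combined with $a$, obstructing the isomorphism.

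The main obstacle I expect is the ``only if'' direction: showing that when $n+2$ is not a power of $2$ \emph{no} linear change of variables over $\Z/2\Z$ can bring the presentation to the product form. The honest way to handle this is to identify a change-of-variable invariant. The natural candidate: in $H^*(\R P^n\times\R P^n)$ the ideal of relations is generated in the single degree $n+1$ by two elements, and moreover the ``tangent space to the variety of square-zero-in-degree-$(n+1)$'' is two-dimensional; equivalently, the quotient by the square of the maximal ideal plus the condition that each degree-one class has $(n+1)$st power zero. I would phrase it as: $H^*$ of the product has the property that the map $H^1\to H^{n+1}$, $\alpha\mapsto\alpha^{n+1}$ (the Frobenius-twisted power map) has $1$-dimensional image spanned by a class that is an $(n+1)$st power, whereas for $R_n$ one computes $x^{n+1}$ directly from the Borel--Hirzebruch relation: $x^{n+1}=\sum_{i=1}^{n+1}\binom{n+2}{i}a^i x^{n+1-i}$, wait — $x^{n+1}=\sum_{i=1}^{n+1}\binom{n+2}{i}a^{i}x^{n+1-i}$ is exactly the defining relation rearranged, and evaluating $(\alpha a+\beta x)^{n+1}$ for general $\alpha,\beta$ and asking when it lies in the span of a single $(n+1)$st power pins down $\binom{n+2}{i}\equiv\binom{n+1}{i}\pmod 2$ for all $i$, which by Lucas' theorem holds iff $n+1$ has all the binary digits that... and this forces $n+2=2^r$. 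So the concrete computation is tractable; the care needed is purely in setting up the right invariant so that the argument is a clean iff rather than a one-directional obstruction.
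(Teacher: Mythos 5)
Your overall strategy (compute $H^{*}(P(\gamma\oplus\tau_{\R P^{n}}))$ from the Borel--Hirzebruch relation and compare with the product ring) is the same as the paper's, but there is a concrete computational slip that you then try to paper over and that breaks both directions. You first write the defining relation correctly as $Y=\sum_{i=0}^{n+1}\binom{n+2}{i}a^{i}x^{n+1-i}$, a degree-$(n+1)$ element, but then re-express it as ``$(x+a)^{n+2}-a^{n+2}$ modulo $a^{n+1}$''. That polynomial has degree $n+2$; in fact $(x+a)^{n+2}-a^{n+2}=x\cdot Y$, not $Y$. This is exactly why your $R_{n}=\Z/2\Z[a,x]/\langle a^{n+1},(x+a)^{n+2}\bmod a^{n+1}\rangle$ comes out with total dimension $(n+1)(n+2)$ rather than $(n+1)^{2}$, and why you then have to invoke ``Poincar\'e duality / dimension count'' to conjure the missing relation $x^{n+1}=0$. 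That patch is not available: $H^{*}(P(\gamma\oplus\tau))$ is \emph{not} a quotient of your $R_{n}$; it is a different ring, namely $\Z/2\Z[a,x]/\langle a^{n+1},Y\rangle$ with $Y$ of degree $n+1$. With the correct relation, the ``if'' direction needs no dimension argument at all: when $n+2=2^{r}$, Lucas gives $\binom{n+2}{i}\equiv 0\pmod 2$ for $0<i<2^{r}$, so $Y=x^{n+1}$ on the nose and the ring is literally $\Z/2\Z[a,x]/\langle a^{n+1},x^{n+1}\rangle$.

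The ``only if'' direction in your write-up never settles on a proof, and the criterion you land on at the end --- that $\binom{n+2}{i}\equiv\binom{n+1}{i}\pmod 2$ for all $i$ would force $n+2=2^{r}$ --- is not correct: that congruence never holds, since $\binom{n+2}{i}=\binom{n+1}{i}+\binom{n+1}{i-1}$ would force $\binom{n+1}{j}\equiv 0$ for $0\le j\le n-1$, contradicting $\binom{n+1}{0}=1$. The paper's argument is the disciplined version of the invariant you were circling: any ring isomorphism carries the generating ideal $\langle a^{n+1},b^{n+1}\rangle$ of the product onto $\langle a^{n+1},Y\rangle$, and since $H^{1}$ has only three nonzero classes ($a$, $x$, $a+x$, with $a$ the base class forced to satisfy $a^{n+1}=0$), the ideal must equal $\langle a^{n+1},x^{n+1}\rangle$ or $\langle a^{n+1},(a+x)^{n+1}\rangle$; comparing the degree-$(n+1)$ pieces forces $Y\equiv x^{n+1}$ or $Y\equiv(a+x)^{n+1}$ modulo $a^{n+1}$. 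The second option is ruled out by the same binomial contradiction just noted, so $Y=x^{n+1}$, i.e.\ $\binom{n+2}{i}\equiv0$ for $1\le i\le n$, i.e.\ $n+2=2^{r}$.
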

\begin{proof}
Because of \eqref{basic}, $\gamma\oplus\tau_{\R P^{n}}\oplus\epsilon=(n+2)\gamma$.
Therefore, we have that
\[
\omega(\gamma\oplus\tau_{\R P^{n}})=(1+x)^{n+2}\equiv \sum_{i=0}^{n}\left(
\begin{array}{c}
n+2 \\
i
\end{array}
\right)
x^{i}
\]
for $x\in H^{1}(\R P^{n})$.
Together with the Borel-Hirzebruch formula,
we see that the cohomology ring of $P(\gamma\oplus\tau_{\R P^{n}})$ is as follows:
\begin{eqnarray*}
H^{*}(P(\gamma\oplus\tau_{\R P^{n}}))\simeq \Z/2\Z[x,y]/\langle x^{n+1}, Y\rangle.
\end{eqnarray*}
Here, 
\begin{eqnarray*}
Y=\sum_{i=0}^{n}\left(
\begin{array}{c}
n+2 \\
i
\end{array}
\right)
y^{n+1-i}
x^{i}
\end{eqnarray*}
Note that $n+2=2^{r}$ if and only if $Y=y^{n+1}$ (e.g. see \cite[Corollary 4.6]{MS}).
Therefore, if $n+2=2^{r}$ then $Y=y^{n+1}$ and the cohomology ring is isomorphic to $H^{*}(\R P^{n}\times \R P^{n})$.
On the other hand, if the cohomology ring is isomorphic to $H^{*}(\R P^{n}\times \R P^{n})$, 
then it is easy to check that $Y$ must be $y^{n+1}$ or $(x+y)^{n+1}$.
However, if $Y=(x+y)^{n+1}$ then
\begin{eqnarray*}
Y=\sum_{i=0}^{n}\left(
\begin{array}{c}
n+2 \\
i
\end{array}
\right)
y^{n+1-i}
x^{i}
=
\sum_{i=0}^{n}\left(
\begin{array}{c}
n+1 \\
i
\end{array}
\right)
y^{n+1-i}
x^{i}.
\end{eqnarray*}
This gives a contradiction.
Therefore, $Y=y^{n+1}$ and $n+2=2^{r}$.
This establishes the statement of this lemma.
\end{proof}

Lemma \ref{cohomological triviality} tells us that if
$n+2\not=2^{r}$ for all $r\in \N$ then
$P(\gamma\oplus\tau_{\R P^{n}})$ is not homeomorphic to $\R P^{n}\times \R P^{n}$.

Assume $n+2=2^{r}$ for some $r\in \N$.
If $r=1$ then $n=0$, so this case is the trivial case.
We may assume $r\ge 2$.

\subsection{``if'' part of Theorem \ref{main-tri}}
\label{sect7.1}

We next show when $\gamma\oplus\tau_{\R P^{n}}$ is the trivial bundle.
To show this, we need the fact about the stable KO group in \cite{A} (also see \cite{M}).
Before we state Lemma \ref{stable-KO}, we need to prepare some notation.
Let $k(2^{r}-1)=\#\{s\in \N\ |\ 0<s\le 2^{r}-2, s\equiv 0,1,2,4\mod 8 \}$.
For example, $k(3)=2$ when $r=2$, $k(7)=3$ when $r=3$, $k(15)=7$ when $r=4$, $k(31)=15$ when $r=5$, e.t.c.
We have the following lemma:
\begin{lemma}
\label{number-theoretical-condition}
If $r=2$ or $3$, then $k(2^{r}-1)=r$.
If $r\ge 4$, then $k(2^{r}-1)=2^{r-1}-1$.
\end{lemma}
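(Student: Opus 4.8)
The plan is to count directly, exploiting the fact that the condition $s\equiv 0,1,2,4\pmod 8$ is periodic with period $8$. Write $N=2^{r}$ and let $S=\{s\in\Z\mid s\equiv 0,1,2,4\pmod 8\}$, so that by definition $k(2^{r}-1)=\#\bigl(S\cap\{1,2,\ldots,N-2\}\bigr)$. The whole argument is then a matter of counting the elements of $S$ in an interval of the correct length and correcting at the two endpoints.

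First I would dispose of $r=2$ and $r=3$ by direct enumeration. For $r=2$ the relevant range is $\{1,2\}$, and both $1$ and $2$ lie in $S$, so $k(3)=2=r$. For $r=3$ the range is $\{1,2,3,4,5,6\}$, of which exactly $1,2,4$ lie in $S$, so $k(7)=3=r$. This settles the first clause of the lemma.

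The main point is the case $r\ge 3$, where $8\mid N=2^{r}$. Then the full set $\{0,1,\ldots,N-1\}$ splits into $N/8$ consecutive blocks of length $8$, each of which contains exactly the four residues $0,1,2,4$; hence $\#\bigl(S\cap\{0,1,\ldots,N-1\}\bigr)=4\cdot\tfrac{N}{8}=2^{r-1}$. To obtain $k(2^{r}-1)$ I then remove the two endpoints $s=0$ and $s=N-1=2^{r}-1$ from this set: since $0\equiv 0\pmod 8$ belongs to $S$ while $2^{r}-1\equiv 7\pmod 8$ (here one uses $r\ge 3$) does not, exactly one element of $S$ is removed, so $k(2^{r}-1)=2^{r-1}-1$ for every $r\ge 3$. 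In particular this proves the second clause, for $r\ge 4$.

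This argument is consistent with the case $r=3$ as well, since $2^{3-1}-1=3$; the value $r=2$ is genuinely exceptional only because $8\nmid 2^{2}$, which is why it is grouped with $r=3$ in the lemma's first clause rather than with the generic formula. I do not anticipate any real obstacle here: the only things requiring a little care are the bookkeeping at the two endpoints of the counting range and remembering that the clean periodic count is available only once $r\ge 3$, so the small values $r=2$ (and, if one prefers, $r=3$) must be handled by inspection.
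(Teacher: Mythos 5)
Your argument is correct, and it takes a genuinely different route from the paper's. The paper proves the $r\ge 4$ case by induction on $r$: it checks $k(15)=7$ as the base case and then observes that passing from $\{1,\ldots,2^{r-1}-2\}$ to $\{1,\ldots,2^{r}-2\}$ adds $2^{r-1}$ further integers, of which exactly $4\cdot 2^{r-4}$ satisfy the residue condition, yielding $k(2^{r}-1)=k(2^{r-1}-1)+4\cdot 2^{r-4}=2^{r-1}-1$. You instead count in one shot: for $r\ge 3$ you have $8\mid 2^{r}$, so $\{0,\ldots,2^{r}-1\}$ partitions into $2^{r-3}$ blocks of length $8$ each contributing $4$ elements, giving $2^{r-1}$ in total, and then you correct at the two endpoints ($0$ is counted, $2^{r}-1\equiv 7$ is not), landing on $2^{r-1}-1$. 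Both are short and elementary; your version is arguably cleaner in that it avoids the induction bookkeeping and makes transparent why $r=3$ already satisfies the general formula $2^{r-1}-1$ (coincidentally equal to $r$ there), so that only $r=2$ is a genuine exception --- a point the paper's grouping into ``$r=2,3$'' versus ``$r\ge 4$'' somewhat obscures.
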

\begin{proof}
The first statement is easy.
The 2nd statement is proved by induction.
When $r=4$, then $k(15)=7$.
Assume the statement is true until $r-1$, i.e., $k(2^{r-1}-1)=2^{r-2}-1$.
Because of the definition of $k(2^{r}-1)$, the number of $s$ such that $0<s\le 2^{r}-2$ and $s\equiv 0,1,2,4$ (${\rm mod}\ 8$) is
\[
k(2^{r}-1)=(2^{r-2}-1)+4 \cdot 2^{r-4}=2^{r-1}-1.
\]
This establishes the statement.
\end{proof}

Together with the stable KO group of real projective space proved in \cite{A}, we have the following lemma:
\begin{lemma}
\label{stable-KO}
When $r=2,3$, $\widetilde{KO}(\R P^{2^{r}-2})$ is a cyclic group generated by $\gamma-\epsilon$ with order $4, 8$, respectively.
When $r\ge 4$,
$\widetilde{KO}(\R P^{2^{r}-2})$ is a cyclic group generated by $\gamma-\epsilon$ with order $2^{(2^{r-1}-1)}$.
\end{lemma}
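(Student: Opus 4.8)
The plan is to deduce Lemma \ref{stable-KO} directly from Adams' computation of the real $K$-theory of real projective spaces, combined with the elementary counting already carried out in Lemma \ref{number-theoretical-condition}. In other words, once the relevant case of Adams' theorem is recalled, the lemma becomes a substitution.

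First I would recall the pertinent statement from \cite{A} (see also the exposition in \cite{M}): for every $n\ge 1$ the reduced group $\widetilde{KO}(\R P^{n})$ is cyclic, generated by the class $\gamma-\epsilon$ of the reduced tautological line bundle, and its order equals $2^{\phi(n)}$, where
\[
\phi(n)=\#\{s\in \N\mid 0<s\le n,\ s\equiv 0,1,2,4\mod 8\}.
\]
It matters for us that Adams' result identifies $\gamma-\epsilon$ as an actual \emph{generator} of this cyclic group, not merely that the group is abstractly $\Z/2^{\phi(n)}$, since the lemma asserts the former.

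Next I would specialize to $n=2^{r}-2$. Comparing the displayed description of $\phi$ with the definition of $k(2^{r}-1)$ recorded just before the statement of Lemma \ref{stable-KO}, one sees immediately that $\phi(2^{r}-2)=k(2^{r}-1)$; hence $\widetilde{KO}(\R P^{2^{r}-2})$ is cyclic, generated by $\gamma-\epsilon$, of order $2^{k(2^{r}-1)}$. Finally I would invoke Lemma \ref{number-theoretical-condition}: it gives $k(2^{r}-1)=r$ for $r=2,3$, so the order is $2^{2}=4$ and $2^{3}=8$ respectively, and $k(2^{r}-1)=2^{r-1}-1$ for $r\ge 4$, so the order is $2^{\,2^{r-1}-1}$. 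This is exactly the assertion of the lemma.

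I do not expect a genuine obstacle: the argument is essentially a substitution into a known formula. The only points requiring a little care are matching Adams' indexing convention for $\phi$ (inclusive upper bound $n$, residues $0,1,2,4\bmod 8$) with the paper's definition of $k(\cdot)$, and sanity-checking the exceptional small cases directly — for $\R P^{2}$ the contributing values are $s=1,2$, giving order $4$, and for $\R P^{6}$ they are $s=1,2,4$, giving order $8$ — so that the generic formula and the separate small-$r$ formula of Lemma \ref{number-theoretical-condition} are seen to be consistent at the boundary.
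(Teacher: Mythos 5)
Your proposal is correct and matches the paper's intended derivation: the paper gives no written proof of this lemma, but the surrounding text makes clear it is obtained exactly as you describe, by citing Adams' theorem that $\widetilde{KO}(\R P^{n})$ is cyclic of order $2^{\phi(n)}$ with generator $\gamma-\epsilon$, observing that $\phi(2^{r}-2)=k(2^{r}-1)$, and then substituting the values from Lemma~\ref{number-theoretical-condition}.
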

Note that $\gamma\oplus\tau_{\R P^{n}}$ is in the stable range, i.e., the dimension of fibre is strictly greater than $n$.
Because of the stable range theorem (i.e., for vector bundles $\kappa$ and $\eta$ in the stable range, $\kappa\oplus\epsilon^{a}\equiv\eta\oplus\epsilon^{a}$ iff $\kappa\equiv\eta$, see \cite[Chapter 9]{H}),
$\gamma\oplus\tau_{\R P^{n}}$ is the trivial bundle if and only if it is the trivial bundle in $\widetilde{KO}(\R P^{n})$.
By this fact, we have the following proposition:
\begin{lemma}
\label{n=2,6}
Assume $n=2^{r}-2$. Then
$\gamma\oplus \tau_{\R P^{n}}\equiv(n+1)\epsilon$ if and only if $n=2,6$.
\end{lemma}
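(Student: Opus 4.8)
The plan is to convert the bundle isomorphism in question into an equality in the reduced real $K$-theory group $\widetilde{KO}(\R P^n)$ and then read off the answer from Lemma~\ref{stable-KO}. Since both $\gamma\oplus\tau_{\R P^n}$ and $(n+1)\epsilon$ have rank $n+1>n=\dim\R P^n$, the stable range theorem (recalled just before this lemma) shows that $\gamma\oplus\tau_{\R P^n}\equiv(n+1)\epsilon$ as vector bundles if and only if the two agree in $\widetilde{KO}(\R P^n)$. So the first step is to compute the class $[\gamma\oplus\tau_{\R P^n}]-[(n+1)\epsilon]$ in $\widetilde{KO}(\R P^n)$.

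Using the relation \eqref{basic}, which gives $[\tau_{\R P^n}]=(n+1)[\gamma]-[\epsilon]$, one obtains
\[
[\gamma\oplus\tau_{\R P^n}]-[(n+1)\epsilon]=[\gamma]+(n+1)[\gamma]-[\epsilon]-(n+1)[\epsilon]=(n+2)\bigl([\gamma]-[\epsilon]\bigr).
\]
Hence, for $n=2^{r}-2$, i.e.\ $n+2=2^{r}$, the identity $\gamma\oplus\tau_{\R P^n}\equiv(n+1)\epsilon$ holds exactly when $2^{r}\bigl([\gamma]-[\epsilon]\bigr)=0$ in $\widetilde{KO}(\R P^{2^{r}-2})$, i.e.\ when the order of $[\gamma]-[\epsilon]$ divides $2^{r}$. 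By Lemma~\ref{stable-KO} that order is $4$ for $r=2$, $8$ for $r=3$, and $2^{2^{r-1}-1}$ for $r\ge 4$. For $r=2$ this means $4\mid 4$ and for $r=3$ it means $8\mid 8$, both true, giving $n=2$ and $n=6$; for $r\ge 4$ it would require $2^{r-1}-1\le r$, which already fails at $r=4$ (since $7>4$) and a fortiori for all larger $r$ (this is the estimate of Lemma~\ref{number-theoretical-condition}). The excluded value $r=1$, $n=0$, is the trivial case. This establishes the equivalence.

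I do not expect a serious obstacle here: the only nontrivial input is the order of $[\gamma]-[\epsilon]$ in $\widetilde{KO}(\R P^{2^{r}-2})$, which is precisely the content of Lemma~\ref{stable-KO} (hence of Adams' computation in \cite{A}), and the needed numerical inequality is Lemma~\ref{number-theoretical-condition}. The only points demanding care are the correct use of the stable range theorem to pass between the bundle statement and the $K$-theory statement, and bookkeeping of ranks so that the displayed difference genuinely lies in the reduced group.
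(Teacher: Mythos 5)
Your proof is correct and follows essentially the same route as the paper: use relation~\eqref{basic} to rewrite $\gamma\oplus\tau_{\R P^n}\oplus\epsilon$ as $2^r\gamma$, pass to $\widetilde{KO}(\R P^n)$ via the stable range theorem, and compare the exponent $2^r$ with the order of $[\gamma]-[\epsilon]$ from Lemma~\ref{stable-KO}. Your formulation in terms of the class $(n+2)\bigl([\gamma]-[\epsilon]\bigr)$ is a slightly cleaner way to organize the same computation; the only minor slip is that the final inequality $2^{r-1}-1>r$ for $r\ge 4$ is not literally the content of Lemma~\ref{number-theoretical-condition} (which computes $k(2^r-1)$, not the inequality), but it is an elementary estimate the paper also takes for granted.
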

\begin{proof}
By using \eqref{basic}, we have that
\begin{eqnarray}
\label{basic2}
\gamma\oplus \tau_{\R P^{n}}\oplus \epsilon\equiv 2^{r}\gamma.
\end{eqnarray}
It follows from Lemma \ref{stable-KO} that when $r\ge 4$
\begin{eqnarray}
\label{relation}
2^{(2^{r-1}-1)}\gamma \equiv 2^{(2^{r-1}-1)}\epsilon.
\end{eqnarray}
Because $r<2^{r-1}-1$, together with \eqref{basic2}, this case is not the trivial bundle.
On the other hand, when $r=2,3$, we have that
\begin{eqnarray*}
2^{(2^{r-1}-1)}\gamma=2^{r}\gamma \equiv 2^{(2^{r-1}-1)}\epsilon=2^{r}\epsilon.
\end{eqnarray*}
Therefore, by \eqref{basic2} and the stable range theorem, $\gamma\oplus\tau_{\R P^{n}}$ is the trivial bundle.
This establishes the statement.
\end{proof}
Hence, by Lemma \ref{n=2,6}, the projectivization $P(\gamma\oplus\tau_{\R P^{n}})$ has the trivial topology when $n=2,6$.
This establishes the ``if'' part of Theorem \ref{main-tri}.

\subsection{``only if'' part of Theorem \ref{main-tri}}
\label{sect7.2}

We next prove the ``only if'' part of Theorem \ref{main-tri}.
The idea of this proof is based on the idea of the proof of Theorem 3.2 in \cite{M}.
Assume that there exists the following diffeomorphism:
\[
f:P=P(\gamma\oplus\tau_{\R P^{n}})\to \R P^{n}\times \R P^{n}(=P((n+1)\epsilon))=T,
\]
and we put the projections to the 1st and 2nd factor by $\pi_{1}:P\to \R P^{n}$, $\pi_{2}:T\to \R P^{n}$, respectively.
Now $f^{*}(\tau_{T})=\tau_{P}$ in $\widetilde{KO}(P)$.
Recall the following theorem proved in \cite[Lemma 3.1]{M}:
\begin{lemma}
Let $E\to X$ be a real smooth vector bundle over a smooth manifold $X$.
Let $\pi:P(E)\to X$ be its projectivization and $\eta$ be the tautological real line bundle of $P(E)$.
Then the tangent bundle $\tau_{P(E)}$ of $P(E)$ with $\epsilon^{1}$ added is isomorphic to ${\rm Hom}(\eta,\pi^{*}E)\oplus \pi^{*}\tau_{X}$.
\end{lemma}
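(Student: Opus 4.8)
The plan is to deduce the isomorphism from the classical Euler sequence of a real projective space, set up in families over $X$. First I would separate $\tau_{P(E)}$ into a vertical and a horizontal part. Since $\pi\colon P(E)\to X$ is a smooth fibre bundle with fibre $\R P^{k-1}$, where $k$ is the rank of $E$, its differential gives the short exact sequence
\begin{eqnarray*}
0\longrightarrow T^{v}\longrightarrow \tau_{P(E)}\stackrel{d\pi}{\longrightarrow}\pi^{*}\tau_{X}\longrightarrow 0
\end{eqnarray*}
of real vector bundles over $P(E)$, in which $T^{v}=\ker d\pi$ is the bundle of vectors tangent to the fibres. Choosing a Riemannian metric on $P(E)$ (equivalently, an Ehresmann connection) splits this sequence, so $\tau_{P(E)}\cong T^{v}\oplus\pi^{*}\tau_{X}$. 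Hence it suffices to prove $T^{v}\oplus\epsilon^{1}\cong{\rm Hom}(\eta,\pi^{*}E)$; the summand $\pi^{*}\tau_{X}$ is just carried along.

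For that I would use the relative Euler sequence. Tautologically $\eta$ is a subbundle of $\pi^{*}E$, its fibre over $(x,\ell)$ being the line $\ell\subset E_{x}$, so we obtain the rank-$(k-1)$ quotient bundle $Q=\pi^{*}E/\eta$ and a short exact sequence $0\to\eta\to\pi^{*}E\to Q\to 0$. Applying ${\rm Hom}(\eta,-)$, i.e. tensoring by $\eta^{*}$, gives
\begin{eqnarray*}
0\longrightarrow {\rm Hom}(\eta,\eta)\longrightarrow {\rm Hom}(\eta,\pi^{*}E)\longrightarrow {\rm Hom}(\eta,Q)\longrightarrow 0,
\end{eqnarray*}
and ${\rm Hom}(\eta,\eta)\cong\epsilon^{1}$ canonically via the identity section. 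On the other hand, the fibrewise identification of the tangent space of $\R P(V)$ at a line $\ell$ with ${\rm Hom}(\ell,V/\ell)$ globalizes to a bundle isomorphism $T^{v}\cong{\rm Hom}(\eta,Q)$. Substituting these two identifications into the displayed sequence exhibits ${\rm Hom}(\eta,\pi^{*}E)$ as an extension of $T^{v}$ by $\epsilon^{1}$; splitting it, again by a metric, yields $T^{v}\oplus\epsilon^{1}\cong{\rm Hom}(\eta,\pi^{*}E)$, and therefore $\tau_{P(E)}\oplus\epsilon^{1}\cong{\rm Hom}(\eta,\pi^{*}E)\oplus\pi^{*}\tau_{X}$.

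The step I expect to require the most care is the canonical identification $T^{v}\cong{\rm Hom}(\eta,Q)$, together with the check that, under it, the surjection ${\rm Hom}(\eta,\pi^{*}E)\to{\rm Hom}(\eta,Q)$ is indeed the fibrewise derivative of the $\R^{*}$-quotient map $E_{x}\setminus\{0\}\to\R P(E_{x})$. In a local trivialization $U\times\R^{k}$ of $E$ this is a short coordinate computation, and exactness is then immediate; the only real work is to phrase it intrinsically so that it is manifestly independent of the trivialization and hence patches over $X$. Beyond this, the argument uses nothing more than the splitting of short exact sequences of real vector bundles over a paracompact base.
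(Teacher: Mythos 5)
Your proof is correct. The paper, however, does not prove this lemma---it quotes the statement from Masuda \cite[Lemma 3.1]{M} without reproducing the argument---so there is no in-paper proof to compare against; the relative Euler-sequence argument you give is the standard one and matches the argument in \cite{M}. One small economy is available: a single choice of bundle metric on $E$ already splits $0\to\eta\to\pi^{*}E\to Q\to 0$, and applying ${\rm Hom}(\eta,-)$ to the resulting direct-sum decomposition yields ${\rm Hom}(\eta,\pi^{*}E)\cong{\rm Hom}(\eta,\eta)\oplus{\rm Hom}(\eta,Q)\cong\epsilon^{1}\oplus T^{v}$ at once, so you do not need a second splitting of the ${\rm Hom}$-ed sequence. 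Your closing comment about which step needs care is also on target: evaluating at a nonzero $v\in\ell$ identifies the surjection ${\rm Hom}(\ell,E_{x})\to{\rm Hom}(\ell,E_{x}/\ell)$ with the projection $E_{x}\to E_{x}/\ell$, which is the differential at $v$ of $E_{x}\setminus\{0\}\to\R P(E_{x})$, and the scaling identity $dq_{tv}=t^{-1}dq_{v}$ is exactly what makes the resulting map $\ell\to E_{x}/\ell$ linear and hence independent of the chosen $v$.
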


By this lemma and \eqref{basic}, 
we have that 
\begin{eqnarray*}
\tau_{P}\oplus\epsilon^{1}\oplus\epsilon^{1}\equiv {\rm Hom}(\eta_{P},\pi_{1}^{*}(\gamma\oplus\tau_{\R P^{n}}))\oplus\pi_{1}^{*}\tau_{\R P^{n}}\oplus\epsilon^{1}
\equiv {\rm Hom}(\eta_{P},\gamma_{P}\oplus \pi^{*}_{1}\tau_{\R P^{n}})\oplus (n+1)\gamma_{P}
\end{eqnarray*}
and 
\begin{eqnarray*}
\tau_{T}\oplus\epsilon^{1}\oplus\epsilon^{1}\equiv {\rm Hom}(\eta_{T},\pi_{2}^{*}((n+1)\epsilon))\oplus\pi^{*}_{2}\tau_{\R P^{n}}\oplus\epsilon^{1}
\equiv {\rm Hom}(\eta_{T},(n+1)\epsilon)\oplus (n+1)\gamma_{T},
\end{eqnarray*}
where
$\gamma_{P}=\pi^{*}_{1}\gamma$ and $\gamma_{T}=\pi_{2}^{*}\gamma$ for the tautological line bundle $\gamma$ over $\R P^{n}$,
and 
$\eta_{P}$ and $\eta_{T}$ are the tautological line bundles over $P=P(\gamma\oplus\tau_{\R P^{n}})$ and $T=P(\epsilon^{n+1})$, respectively.
Together with $f^{*}\tau_{T}=\tau_{P}$,
we have the following isomorphism:
\begin{eqnarray}
\label{hom}
f^{*}\left({\rm Hom}(\eta_{T},(n+1)\epsilon)\oplus (n+1)\gamma_{T}\right)
\equiv {\rm Hom}(\eta_{P},\gamma_{P}\oplus \pi^{*}_{1}\tau_{\R P^{n}})\oplus (n+1)\gamma_{P}
\end{eqnarray}
By the cohomology ring computed in Lemma \ref{cohomological triviality}, $f^{*}w_{1}(\gamma_{T})=w_{1}(\gamma_{P})$, i.e.,
$f^{*}\gamma_{T}=\gamma_{P}$.
Therefore, by \eqref{hom}, in $\widetilde{KO}(P)$ we have
\begin{eqnarray}
\label{hom2}
{\rm Hom}(f^{*}\eta_{T},(n+1)\epsilon)
\equiv {\rm Hom}(\eta_{P},\gamma_{P}\oplus \pi^{*}_{1}\tau_{\R P^{n}}).
\end{eqnarray}
By taking the zero section to $\tau_{\R P^{n}}$, we have the cross section $\sigma$ of $\pi_{1}:P\to \R P^{n}$.
The induced homomorphism of $\sigma^{*}:\widetilde{KO}(P)\to \widetilde{KO}(\R P^{n})$ sends this identity \eqref{hom2} to
$\widetilde{KO}(\R P^{n})$.
Because $\sigma^{*}\eta_{P}$ is the trivial bundle over $\R P^{n}$, we have that
\begin{eqnarray}
\label{hom3}
{\rm Hom}(\sigma^{*}f^{*}\eta_{T},(n+1)\epsilon)
\equiv {\rm Hom}(\epsilon,\gamma\oplus \tau_{\R P^{n}})\equiv \gamma\oplus \tau_{\R P^{n}}.
\end{eqnarray}
Now,
by the cohomology ring computed in Lemma \ref{cohomological triviality} again,
we also have the two cases $f^{*}w_{1}(\eta_{T})=w_{1}(\eta_{P})$ and $w_{1}(\gamma_{P})+w_{1}(\eta_{P})$;
these correspond to $f^{*}\eta_{T}=\eta_{P}$ and $\gamma_{P}\otimes\eta_{P}$, respectively.
If $f^{*}\eta_{T}=\eta_{P}$, then by \eqref{hom3}, we have that
\[
(n+1)\epsilon\equiv \gamma\oplus \tau_{\R P^{n}}
\]
in $\widetilde{KO}(\R P^{n})$.
By Lemma \ref{n=2,6}, such case is the only $n=2$ or $6$.
If $f^{*}\eta_{T}=\gamma_{P}\otimes\eta_{P}$, then 
\begin{eqnarray*}
\sigma^{*}f^{*}\eta_{T}=\sigma^{*}\gamma_{P}\otimes\sigma^{*}\eta_{P}\equiv \gamma\otimes \epsilon\equiv \gamma.
\end{eqnarray*}
Therefore, 
by \eqref{hom3}, we have that
$(n+1)\gamma\equiv \gamma\oplus \tau_{\R P^{n}}.$
By taking the tensor of $\gamma$, we also have that
\begin{eqnarray}
\label{rel-final}
(n+1)\epsilon\equiv \epsilon\oplus(\gamma\otimes \tau_{\R P^{n}}).
\end{eqnarray}
Because $\gamma\oplus(\gamma\otimes \tau_{\R P^{n}})\equiv (n+1)\epsilon$, the vector bundle $\gamma\otimes \tau_{\R P^{n}}$
is the normal bundle $\gamma^{\perp}$ of $\gamma$ in $(n+1)\epsilon$.
Therefore,
the Stiefel-Whitney class satisfies
\[
w(\gamma\otimes \tau_{\R P^{n}})=1+x+\cdots+ x^{n}.
\]
Hence, by \eqref{rel-final}, such case is just $n=0$.
Because this case is the trivial case, we establish the ``only if'' part.

\medskip
Finally, we ask the following general question by motivating the above fact.
\begin{problem}[topological triviality problem]
Let $\xi$ be a rank $k$ vector bundle over smooth manifold $M$.
When is its projectivization $P(\xi)$ diffeomorphic to $\R P^{k-1}\times M$?
\end{problem}

\section*{Acknowledgments}
The authors would like to
thank Professor Richard Montgomery for his interest of the first survey article \cite{Kuroki2} and giving us the problem.

\end{document}